\newcommand{\as}[1]{\textcolor{black}{#1}}
\newcommand{\rev}[1]{\textcolor{black}{#1}}
\theoremstyle{plain}
\newtheorem{thm}{Theorem}[section]
\theoremstyle{plain}
\newtheorem{lem}[thm]{Lemma}
\newtheorem{prop}[thm]{Proposition}
\newtheorem{cor}[thm]{Corollary}
\theoremstyle{definition}
\newtheorem{defi}[thm]{Definition}
\newtheorem{rem}[thm]{Remark}
\newtheorem{assumption}[thm]{Assumption}
\newcommand{\ga}{\alpha}
\newcommand{\gb}{\beta}
\newcommand{\gd}{\delta}
\newcommand{\eps}{\ensuremath{\varepsilon}}
\renewcommand{\gg}{\gamma}
\newcommand{\gk}{\kappa}
\newcommand{\go}{\omega}
\newcommand{\gs}{\sigma}
\newcommand{\gt}{\theta}
\newcommand{\gD}{\Delta}
\newcommand{\gO}{\Omega}
\newcommand{\cA}{\mathcal{A}}
\newcommand{\cB}{\mathcal{B}}
\newcommand{\cC}{\mathcal{C}}
\newcommand{\cD}{\mathcal{D}}
\newcommand{\cE}{\mathcal{E}}
\newcommand{\cI}{\mathcal{I}}
\newcommand{\cK}{\mathcal{K}}
\newcommand{\cL}{\mathcal{L}}
\newcommand{\cO}{\mathcal{O}}
\newcommand{\cP}{\mathcal{P}}
\newcommand{\cU}{\mathcal{U}}
\newcommand{\cV}{\mathcal{V}}
\newcommand{\cX}{\mathcal{X}}
\newcommand{\cY}{\mathcal{Y}}
\newcommand{\bE}{\mathbb{E}}
\newcommand{\bN}{\mathbb{N}}
\newcommand{\bP}{\mathbb{P}}
\newcommand{\bQ}{\mathbb{Q}}
\newcommand{\bR}{\mathbb{R}}
\newcommand{\bT}{\mathbb{T}}
\newcommand{\bZ}{\mathbb{Z}}
\newcommand{\rC}{{\rm C}}
\newcommand{\rD}{{\rm D}}
\newcommand{\rS}{{\rm S}}
\newcommand{\mfH}{\mathfrak{H}}
\newcommand{\mfT}{\mathfrak{T}}
\newcommand*{\lrscript}[5]{{\vphantom{#1}}_{#2}^{#3}{#1}_{#4}^{#5}}
\newcommand{\dualpair}[4]{\ensuremath{\lrscript{\langle}{#1}{}{}{} #3 ,#4 \rangle_{#2}}}
\newcommand{\be}{\begin {equation}}
\newcommand{\ee}{\end  {equation}}
\newcommand{\bee}{\begin {equation*}}
\newcommand{\eee}{\end {equation*}}
\newcommand{\ol}{\overline}
\newcommand{\ul}{\underline}
\newcommand{\floor}[1]{\lfloor #1 \rfloor}
\newcommand{\ceil}[1]{\lceil #1 \rceil}
\newcommand{\var}{{\rm Var}}
\newcommand{\indi}{\mathds{1}}
\DeclareMathOperator*{\essinf}{\text{ess\,inf}}
\DeclareMathOperator*{\esssup}{\text{ess\,sup}}
\newcommand{\MC}{Monte Carlo}
\newcommand{\MLMC}{multilevel \MC}
\newcommand{\MMLMC}{Multilevel \MC}
\newcommand{\UQ}{uncertainty quantification}
\newcommand{\mf}{\mathfrak } 
\begin{document}
	
	\title{
	Multilevel \MC\, FEM\, \\ for \\ Elliptic PDEs with Besov Random Tree Priors}

	\date{}
	
	\author{
		Christoph Schwab \footnote{ETH Zürich, Seminar for Applied Mathematics}
		\and
		Andreas Stein 
		\addtocounter{footnote}{0}\footnotemark[\value{footnote}]  \footnote{andreas.stein@sam.math.ethz.ch}
		\footnote {The datasets generated during and/or analysed during the current study are available from the corresponding author on reasonable request.}
	}
	
	\maketitle
	
\begin{abstract}
We develop a multilevel Monte Carlo (MLMC)-FEM algorithm for linear, elliptic diffusion problems in polytopal domain $\mathcal D\subset \mathbb R^d$, with Besov-tree random coefficients.
This is to say that the logarithms of the diffusion coefficients are sampled from 
so-called Besov-tree priors, which have recently been proposed to model data for fractal phenomena in science and engineering.
Numerical analysis of the fully discrete FEM for the elliptic PDE includes quadrature 
approximation and must account for a) nonuniform pathwise upper and lower coefficient 
bounds, and for b) low path-regularity of the Besov-tree coefficients.

Admissible non-parametric random coefficients correspond to random functions exhibiting singularities on random fractals with tunable fractal dimension, but involve no a-priori specification of the 
fractal geometry of singular supports of sample paths.
Optimal complexity and convergence rate estimates for quantities of interest and for their second moments are proved. 
A convergence analysis for MLMC-FEM is performed which yields choices of the algorithmic steering parameters for efficient implementation.
A complexity (``error vs work'') analysis of the MLMC-FEM approximations is provided.
\end{abstract}

\section{Introduction}
\label{sec:intro}
The efficient numerical solution of partial differential equations with uncertain 
inputs is key in \emph{forward uncertainty quantification}, i.e.,
the computational quantification of uncertainty of solutions to PDEs with uncertain inputs.
It is also crucial in \emph{computational inverse uncertainty quantification},
e.g. via Markov chain \MC\, methods, where numerous numerical solves of the forward model 
subject to realizations of the uncertain input are required.
Here, we consider the linear, elliptic diffusion with uncertain coefficient.
It models a wide range of phenomena such as diffusion through a medium with uncertain
or even unknown permeability, e.g. in subsurface flow, light scattering in dust clouds, 
to name but a few. 
Physical modelling of subsurface flow in particular stipulates
systems of fractures of uncertain geometry with high permeability along fractures 
(see, e.g., \cite{FracSurvey2019} and the references there).
With fracture geometry being only statistically known, 
it is natural in computational \UQ\, (UQ) to specify the geometry in a nonparametric fashion, 
rather than, for instance, through a Gaussian random field (GRF for short) with a known, 
parametric two-point correlation to be calibrated from experimental data.
This function space perspective has also become topical recently in the context of 
inverse imaging noisy signals.
Modelling with random, fractal geometries also has found applications in biology 
(roots, lungs \cite{AchdouRamif}).
There, Gaussian parametric models have been found computationally efficient 
due to the availability of padding and circulant embedding based numerics,
enabling the use of fast Fourier transform algorithms for sample path generation.
However, Gaussian models 
are perceived as inadequate for the efficient representation of edges and interfaces in imaging. 
Accordingly, non-parametric representations of inputs with fractal irregularities in 
sample paths have been proposed recently, e.g. in \cite{KLSS21,HL11}, 
and the references there. 
We also mention the so-called \emph{Besov priors} in 
Bayesian inverse problems with elliptic PDE constraints 
(e.g. \cite{LassasBesov09,HosNig17,AgaDasHel2021} and the references there).

In the present paper, we investigate so-called \emph{Besov random tree priors} \cite{KLSS21}, as stochastic log-diffusion coefficient in a linear elliptic PDE. These priors are given by a wavelet series with stochastic coefficients, and certain terms in the expansion vanishing at random, according to the law of so-called \emph{Galton-Watson trees}.
Samples of the corresponding random fields involve fractal geometries, hence the Besov random tree prior may be a viable candidate in applications, 
where models based on GRFs do not allow for sufficiently flexibility. 
We quantify the pathwise regularity of the random tree prior in terms of Hölder-regularity, and investigate the forward propagation of the uncertainty in the elliptic PDE model in a bounded domain. 
All results in the present article encompass the "standard" Besov prior from~\cite{LassasBesov09} as special case, when no terms in the wavelet series are eliminated. 
As we point out in our analysis, regularity is inherently low, both with respect to the spatial and stochastic domain of the random field. 
This is taken into account when developing efficient numerical methods for the elliptic PDE problem at hand.

We develop a \MLMC\, (MLMC) Finite Element (FE) simulation algorithm and furnish its mathematical analysis for the estimation of quantities of interest (QoI) in the forward PDE model. 
\MMLMC\, methods (\cite{giles2008multilevel, giles2015multilevel, barth2011multi}) are, by now, 
a well-established methodology in computational UQ, and are effective in regimes with comparably low regularity. 
\rev{We mention that our MLMC FE error analysis includes the case of standard low-regularity Besov priors as a special case.}
In contrast, higher-order methods that consider an equivalent parametric, deterministic PDE problem,
such as (multilevel) Quasi-\MC\, (\cite{herrmann2019multilevel}), 
generalized polynomial chaos (gPC) expansions (\cite{hoang2013sparse}), 
or multilevel Smolyak quadrature (\cite{zech2020convergence}) 
are \emph{not suitable} in the present random tree model: 
The parametrization of the prior involves discontinuities in the stochastic domain, which strongly violates the regularity requirements of the aforementioned higher-order methods.
\rev{We refer to Appendix~\ref{appendix:parametrization} for a detailed discussion.}
On the other hand, MLMC techniques merely require square-integrability of the pathwise solution 

\subsection{Contributions}
\label{sec:Contr}
For a model linear elliptic diffusion equation, 
in a polytopal domain $\cD\subset \bR^d$, 
we provide the mathematical setting and the numerical analysis
of a MLMC-FEM for diffusion in random media with log-fractal Besov random tree structure.
In particular, we establish well-posedness of the forward problem including strong measurability of
random solutions (a key ingredient in the ensuing MLMC-FE convergence analysis),
and pathwise almost sure Besov regularity of weak solutions.
Technical results of independent interest include:
(i) Bounds on exponential moments of Besov random variables in Hölder norms, 
    generalizing results in \cite{KLSS21, DHS12, LassasBesov09},
(ii) Numerical analysis of elliptic forward problems with fractal coefficient, 
     in particular bounds on the fractal scale truncation error and on the 
    finite element approximation error, as well as the impact of numerical quadrature in view
    of low (H\"older) path regularity of the random coefficients,
(iii) a complete MLMC-FE convergence analysis, for estimating the mean of non-linear functionals of the random solution field.
    
\subsection{Preliminaries and Notation}
\label{sec:Notat}
We denote by $\cV'$ the topological dual for any vector space 
$\cV$ and by $\dualpair{\cV'}{\cV}{\cdot}{\cdot}$ the associated dual pairing.
We write $\cX\hookrightarrow\cY$ for two normed spaces $(\cX, \left\|\cdot\right\|_\cX), (\cY, \left\|\cdot\right\|_\cY)$, if $\cX$ is continuously embedded in $\cY$, i.e., 
there exists $C>0$ such that $\|\varphi\|_\cY\le C\|\varphi\|_\cX$ holds for all $\varphi\in\cX$.
The Borel $\gs$-algebra of any metric space $(\cX, d_\cX)$ is generated by the open sets in $\cX$ and denoted by $\cB(\cX)$.
For any $\gs$-finite and complete measure space $(E,\cE,\mu)$, 
a Banach space $(\cX, \left\|\cdot\right\|_\cX)$, 
and 
integrability exponent $p\in[1,\infty]$,
we define the Lebesgue-Bochner spaces 
\begin{equation*}
	L^p(E; \cX):=\{\varphi:E\to\cX|\;
	\text{$\varphi$ is strongly measurable and $\|\varphi\|_{L^p(E;\cX)}<\infty$}  \},
\end{equation*}
where 
\begin{equation*}
	\|\varphi\|_{L^p(E;\cX)}:=
	\begin{cases}
		\left(\int_{E}\|\varphi(x)\|_\cX^p\mu(dx)\right)^{1/p},\quad &p\in[1,\infty) \\
		\esssup_{x\in E} \|\varphi(x)\|_\cX,\quad &p=\infty.
	\end{cases}
\end{equation*}
In case that $\cX=\bR$, we use the shorthand notation $L^p(E):=L^p(E;\bR)$.
If $E\subset\bR^d$ is a subset of \rev{the} Euclidean space, 
we assume $\cE=\cB(E)$ and $\mu$ is the Lebesgue measure, 
unless stated otherwise.
For any bounded and connected spatial domain $\cD\subset \bR^d$ we denote for $k\in\bN$ and $p\in[1,\infty]$ 
the standard Sobolev space $W^{k,p}(\cD)$ with $k$-order weak derivatives in $L^p(\cD)$.
The Sobolev-Slobodeckji space with fractional order $s\ge0$ is denoted by $W^{s,p}(\cD)$. 
Furthermore, $H^s(\cD):=W^{s,2}(\cD)$ for any $s\ge0$ and we use the identification $H^0(\cD)=L^2(\cD)$. 
Given that $\cD$ is a Lipschitz domain, we define for any $s>1/2$
\begin{equation}
	H_0^s(\cD):={\rm ker}(\gamma_0) = \{\varphi\in H^s(\cD)|\; \gamma_0(\varphi)=0 \;\mbox{on}\; \partial\cD \},
\end{equation}
Here, 
$\gamma_0\in \cL(H^s(\cD),H^{s-1/2}(\partial \cD))$ denotes the trace operator.

Let $\rC(\ol\cD)$ denote the space of all continuous functions $\varphi:\ol\cD\to\bR$.
For any $\ga\in\bN$, $\rm \rC^{\ga}(\ol\cD)$ is the space of all functions $\varphi\in\rC(\ol\cD)$ 
with $\ga$ continuous partial derivatives. 
For non-integer $\ga>0$,
we denote by $\rC^{\ga}(\ol\cD)$ the space of all 
$\varphi\in \rC^{\floor{\ga}}(\ol\cD)$ with $\ga-\floor{\ga}$-Hölder 
continuous $\floor{\ga}$-th partial derivatives.
For any positive, real $\ga>0$ we further denote by $\cC^\ga(\cD)$ the \emph{Hölder-Zygmund space} of smoothness $\ga$. 
We refer to, e.g., \cite[Section 1.2.2]{TriebelTOFS2} for a \rev{definition}.
We denote by $\rS(\bR^d)$ the Schwartz space of all smooth, rapidly decaying functions, 
and with $\rS'(\bR^d)$ its dual, the space of tempered distributions.
Moreover, for any open set $O\subseteq\bR^d$, 
$\mathrm D(O)$ denotes the space of all smooth functions 
$\varphi\in \as{\rC}^\infty(O)$ with compact support in $O$.

\rev{
For the finite element error analysis we introduce a countable set $\mfH\subset(0,\infty)$
of discretization parameters, 
and denote by $h\in \mfH$ a generic 
discretization parameter, such as in the present paper the FE meshwidth of a regular,
simplicial and quasi-uniform partition of the physical domain.
We further assume there exists a decreasing sequence 
$(h_\ell, \ell\in\bN)\subset\mfH$ such that $\lim_{\ell\to\infty} h_\ell = 0$.
}

\subsection{Layout of this paper}
\label{sec:Layout}
In Section~\ref{sec:besov-rv} we introduce the class of random fields 
taking values in the Besov spaces $B^s_{p,p}$ 
which we will use in the sequel to model
the logarithm of the diffusion coefficient function. 
Using multiresolution (``wavelet'') bases in $B^s_{p,p}$, 
in Sections~\ref{subsec:besov-rv}, \ref{subsec:besov-tree-rv}
we construct probability measures on $B^s_{p,p}$ in the spirit of
the Gaussian measure on path space for the Wiener process, in 
L\'{e}vy-Cieselski representation. The multilevel structure of
the construction will be essential in the ensuing MLMC-FE convergence analysis and its algorithmic realization.
In Section~\ref{sec:randompdes} we introduce the linear, elliptic 
divergence-form PDE with Besov-tree coefficients. We recapitulate
(mostly known) results on existence, uniqueness and on strong measurability
of random solutions.
In Section~\ref{sec:approximation} we introduce a conforming Galerkin
Finite Element discretization based on continuous, piecewise linear
approximations in the physical domain.
We account for the error due to finite truncation of the
random tree priors, and provide sharp error bounds for the 
Finite Element discretization errors, under the (generally low) 
Besov regularity of the coefficient samples.
Section~\ref{sec:mlmc} then addresses the MLMC-FE 
error analysis, also for Fr\'echet-differentiable, possibly
nonlinear functionals.
Section~\ref{sec:numerics} then illustrates the theory with several
numerical experiments, where the impact of the parameter choices
in the Besov random tree priors on the overall error convergence 
of the MLMC-FEM algorithms is studied.
Section~\ref{sec:Concl} provides a brief summary of the main results,
and \rev{indicates} several generalizations of these and directions of further
research.
Appendix~\ref{appendix:trees} collects definitions 
and key properties of Galton-Watson trees which are used in the main text. 
Appendix~\ref{appendix:fem} provides a detailed description of the 
FE implementation in the experiments reported in Section~\ref{sec:numerics}.
\section{Random Variables in Besov Spaces}
\label{sec:besov-rv}
\subsection{Wavelet representation of Besov spaces}
\label{subsec:besovspaces}
Let $\bT^d:=[0,1]^d$ denote the $d$-dimensional torus for $d\in\bN$.
We briefly recall the construction of orthonormal wavelet basis on $L^2(\bR^d)$ and $L^2(\bT^d)$ 
and the wavelet representation of the associated Besov spaces. 
For more detailed accounts 
we refer to \cite[Chapter 1]{TriebelFctSpcDom}, \cite[Chapter 1.2]{TriebelTOFS4}, 
and to \cite[Chapter 5]{daubechies1992ten} for orthonormal wavelets in multiresolution analysis (MRA).
\subsubsection{Univariate MRA}
\label{subsec:1dMRA}
Let $\phi$ and $\psi$ be compactly supported scaling and wavelet functions in $\rC^{\ga}(\bR)$, $\ga\ge 1$,
suitable for multi-resolution analysis in $L^2(\bR)$.  
We assume that $\psi$ satisfies \emph{the vanishing moment condition} 
\begin{equation}\label{eq:vanmoments}
	\int_\bR \psi(x) x^m dx = 0, \quad m\in\bN_0,\; m<\ga.
\end{equation}
One example are Daubechies wavelets with $ M := \lfloor \ga \rfloor \in\bN$ vanishing moments 
also known as ${\mathrm D}{\mathrm B}(\lfloor \ga \rfloor)$-wavelets), 
that have support $[-M+1,M]$ and are in $\as{\rC}^1(\bR)$ for $M\ge 5$ 
(see, e.g., \cite[Section 7.1]{daubechies1992ten}).
For any $j\in\bN_0$ and $k\in\bZ$, the MRA is defined by the dilated 
and translated functions
\begin{equation}
	\psi_{j,k,0}(x):=\phi(2^jx-k),
	\quad \text{and} \quad
	\psi_{j,k,1}(x):=\psi(2^jx-k),
	\quad x\in\bR.
\end{equation}
As $\|\phi\|_{L^2(\bR)}=\|\psi\|_{L^2(\bR)}=1$, it follows that 
$((\psi_{0,k,0}), k\in\bZ)\,\cup\,((2^{j/2}\psi_{j,k,1}), (j,k)\in\bN_0\times\bZ)$ 
is an orthonormal basis of $L^2(\bR)$.
\subsubsection{Multivariate MRA}
\label{subsec:dVarMRA}
A corresponding \emph{isotropic}\footnote{Anisotropic tensorizations leading upon truncation 
to so-called ``hyperbolic cross approximations'' may be considered. As such constructions tend to
inject preferred directions along the cartesian axes, we do not consider them here.}
wavelet basis that is orthormal in $L^2(\bR^d)$, $d\geq 2$ 
may be constructed by tensorization of univariate MRAs.
We define index sets $\cL_0:=\{0,1\}^d$ and $\cL_j:=\cL_0\setminus \{(0,\dots,0)\}$ for $j\in\bN$.
We note that $\cL_j$ has cardinality $|\cL_j|=2^d$ if $j=0$, and $|\cL_j|=2^d-1$ otherwise.
For any $l\in\cL_0$, we define furthermore  
\begin{equation}\label{eq:psi_scaled}
	\psi_{j,k,l}(x):=2^{dj/2}\prod_{i=1}^d\psi_{j,k_i,l(i)}(x_i),
	\quad  j\in \bN_0,\; k\in\bZ^d,\; x\in\bR^d,
\end{equation}
to obtain that $((\psi_{j,k,l}),\; j\in \bN_0,\, k\in\bZ^d,\, l\in\cL_j)$ 
is an orthonormal basis of $L^2(\bR^d)$.

Orthonormal bases consisting of locally supported, 
periodic functions on the torus $\bT^d$ can be introduced by
tensorization, as e.g. in \cite[Section 1.3]{TriebelFctSpcDom}.
Given $\phi$ and $\psi$, 
we fix a scaling factor $w\in\bN$ such that 
\begin{equation*}
	\text{supp}(\psi_{w,0,l})\subset 
	\left\{x\in\bR^d\big|\, \|x\|_2<\frac{1}{2}\right\},
	\quad l\in\cL_0.
\end{equation*}
With this choice of $w$, it follows for $j\in\bN_0$ that 
\begin{equation*}
	\text{supp}(\psi_{j+w,0,l})\subset 
	\left\{x\in\bR^d\big|\, \|x\|_2<2^{-j-1}\right\}.
\end{equation*}
Now let 
$K_{j}:=\{k\in\bZ^d|\,0\le k_1,\dots,k_d<2^{j}\}\subset 2^{j}\bT^d$ and note that $|K_{j+w}|=2^{d(j+w)}$.
\as{Define the one-periodic wavelet functions
	\begin{equation*}
		\psi_{j,k,l}^{per}(x):=\sum_{n\in\bZ^d} \psi_{j,k,l}(x-n), 
		\quad j\in\bN_0,\; k\in K_j,\;l\in\cL_0,\;x\in\bR^d,
	\end{equation*}
	and their restrictions to $\bT^d$ by 
	\begin{equation}\label{eq:torusbasis}
		\psi_{j,k}^{l}(x)
		:=
		\psi_{j,k,l}^{per}(x), \quad j\in\bN_0,\; k\in K_j,\;l\in\cL_0,\;x\in\bT^d.
\end{equation}}
We now obtain \as{for the index set $\cI_{w}:=\{j\in\bN_0,\; k\in K_{j+w},\;l\in\cL_j\}$}
\begin{equation}
	\mathbf\Psi_w:=\left((\psi_{j+w,k}^l),\;(j,k,l)\in \cI_w \right)
\end{equation}
is a $L^2(\bT^d)$-orthonormal basis, see \cite[Proposition 1.34]{TriebelFctSpcDom}. 
We next introduce Besov spaces via suitable
wavelet-characterization as developed in \cite[Chapter 1.3]{TriebelFctSpcDom}.
\rev{For this we introduce the set of one-periodic distributions on $\bR^d$ given by 
\begin{equation*}
	\rS'^{per}(\bR^d):=\{\varphi\in \rS'(\bR^d)\big|\; 
	\varphi(\cdot - k) = \varphi\quad\text{for all $k\in \bZ^d$}\},
\end{equation*}
see \cite[Eq. (1.131)]{TriebelFctSpcDom}.}
We distinguish between spaces of one-periodic functions on $\bR^d$ 
and their restrictions to $\bT^d$:
\begin{defi}\label{def:besovspace}
	~
	\begin{enumerate}
		
		\item 
	For any $p\in[1,\infty)$ and $s\in(0,\ga)$ the \emph{Besov space $B_{p,p}^{s,per}(\bR^d)$ of one-periodic functions on $\bR^d$} 
        is given by
		\begin{equation}\label{eq:besovspaceper}
			B_{p,p}^{s,per}(\bR^d)
			:=
			\left\{
			\varphi\in \rev{\rS'^{per}(\bR^d)}\bigg|\; 
			\sum_{(j,k,l)\in\cI_w} 2^{jp(s+\frac{d+w}{2}-\frac{d}{p})} |(\varphi,\psi_{j+w,k,l}^{per})_{L^2(\bT^d)}|^p
			<\infty
			\right\}.
		\end{equation}
	In case that $p=\infty$, one has
	\begin{equation}\label{eq:besovspaceinfper}
		B_{\infty,\infty}^{s,per}(\bR^d)
		:=
		\left\{
		\varphi\in \rev{\rS'^{per}(\bR^d)}\bigg|\; 
		\sup_{(j,k,l)\in\cI_w} 2^{j(s+\frac{d+w}{2})} |(\varphi,\psi_{j+w,k,l}^{per})_{L^2(\bT^d)}|
		<\infty
		\right\}.
	\end{equation}
	
		\item For any $p\in[1,\infty)$ and $s\in(0,\ga)$ 
                the \emph{Besov space $B_{p,p}^s(\bT^d)$ on $\bT^d$} is given by
		\begin{equation}\label{eq:besovspace}
			B_{p,p}^s(\bT^d)
			:=
			\left\{
			\varphi\in \mathrm D'(\bT^d) \bigg|\; 
			\sum_{(j,k,l)\in\cI_w} 2^{jp(s+\frac{d+w}{2}-\frac{d}{p})} |(\varphi ,\psi_{j+w,k}^l)_{L^2(\bT^d)}|^p
			<\infty
			\right\}.
		\end{equation}
		In case that $p=\infty$, we set 
		\begin{equation}\label{eq:besovspaceinf}
			B_{\infty,\infty}^s(\bT^d)
			:=
			\left\{
			\varphi\in \mathrm D'(\bT^d) \bigg|\; 
			\sup_{(j,k,l)\in\cI_w} 2^{j(s+\frac{d+w}{2})} |(\varphi ,\psi_{j+w,k}^l)_{L^2(\bT^d)}|
			<\infty
			\right\}.
		\end{equation}
	\end{enumerate}
\end{defi}
\begin{rem}\label{rem:extension}
Definition~\ref{def:besovspace} may be generalized to define the spaces 
$B_{p,q}^{s,per}(\bR^d)$ and $B_{p,q}^s(\bT^d)$ with $p,q\in[1,\infty]$ and $p\neq q$, 
see \cite[Chapter 1.3]{TriebelFctSpcDom}.
The random fields introduced in Subsections~\ref{subsec:besov-rv} and~\ref{subsec:besov-tree-rv} 
are naturally $B_{p,p}^s(\bT^d)$-valued by construction, thus we only treat the case $p=q$ for the sake of brevity.  	
By \cite[Theorem 1.29]{TriebelFctSpcDom}
\rev{there} exists a \emph{prolongation isomorphism}
\begin{equation}\label{eq:extension-map}
	\mathrm {prl}^{per}:B_{p,p}^s(\bT^d)\to B_{p,p}^{s,per}(\bR^d),
\end{equation}
that extends $\varphi\in B_{p,p}^s(\bT^d)$ to its (unique) one-periodic counterpart 
in $\mathrm {prl}^{per}(\varphi)\in B_{p,p}^{s,per}(\bR^d)$.
This in turn allows to identify any $\varphi\in B_{p,p}^s(\bT^d)$ 
as the restriction of a periodic function 
$\mathrm {prl}^{per}(\varphi)\in B_{p,p}^{s,per}(\bR^d)$ to $\bT^d$.
We use $\mathrm {prl}^{per}$ to define (non-periodic) 
Besov space-valued random variables on subsets $\cD\subset \bT^d$ 
by restriction in Subsection~\ref{subsec:randomtree-diffusion}.
	
Definition~\ref{def:besovspace} is based on an equivalent characterization of the spaces 
$B_{p,p}^{s,per}(\bR^d)$ and $B_{p,p}^s(\bT^d)$. 
They are often (equivalently) defined using a dyadic partition of unity 
(see e.g. \cite[Definitions 1.22 and 1.27]{TriebelFctSpcDom}): 
	Using the latter definition for 
	$B_{p,p}^{s,per}(\bR^d)$,
	\cite[Theorem 1.36(i)]{TriebelFctSpcDom} shows that 
        the spaces \eqref{eq:besovspaceper} resp. \eqref{eq:besovspaceinfper} 
        are isometrically isomorphic to $B_{p,p}^{s,per}(\bR^d)$. 
	As a consequence of the prolongation map $\mathrm{prl}^{per}$ in~\eqref{eq:extension-map}, 
        the same holds true for the spaces $B_{p,p}^s(\bT^d)$, 
        see \cite[Theorem 1.37(i)]{TriebelFctSpcDom}.
\end{rem}
\subsubsection{Besov Spaces and MRAs}
\label{sec:BesMRAs}
We define the subspace 
$V_{w+1}:=\text{span}\{\psi_{w,k}^l|\; k\in \rev{K_w},\ l\in\cL_0\}\subset L^2(\bT^d)$ 
and observe that $\dim(V_{w+1})=2^{d(w+1)}$.
By the multiresolution analysis for one-periodic, univariate functions 
in \cite[Chapter 9.3]{daubechies1992ten}, 
it follows that 
$( (\psi_{j,k}^l) ,\;j\le w,\; k\in K_j,\ l\in\cL_j)$ is another orthonormal basis of $V_{w+1}$.
Hence, 
we may replace the first $2^{d(w+1)}$ basis functions in~\eqref{eq:torusbasis} 
to obtain the (computationally more convenient) 
$L^2(\bT^d)$-orthonormal basis
\begin{equation}\label{eq:torusbasis2}
	\mathbf\Psi:=\left((\psi_{j,k}^l),\;(j,k,l)\in \cI_{\mathbf \Psi} \right),
	\quad
	\cI_{\mathbf \Psi}:=\{j\in\bN_0,\; k\in K_j,\;l\in\cL_j\}.
\end{equation}
Based on~\eqref{eq:torusbasis2}, we define for $s>0$, $p\in[1,\infty)$, 
and sufficiently regular $\varphi\in L^2(\bT^d)$
the \emph{Besov norms} 
\begin{equation}\label{eq:besovnorm}
	\|\varphi\|_{B_{p,p}^s(\bT^d)}:=
	\left(
	\sum_{(j,k,l)\in\cI_{\mathbf\Psi}} 2^{jp(s+\frac{d}{2}-\frac{d}{p})} |(\varphi ,\psi_{j,k}^l)_{L^2(\bT^d)}|^p
	\right)^{1/p},
\end{equation}
and
\begin{equation}\label{eq:besovnorminf}
	\|\varphi\|_{B_{\infty,\infty}^s(\bT^d)}:=
	\sup_{(j,k,l)\in\cI_{\mathbf\Psi}} 2^{j(s+\frac{d}{2})} |(\varphi ,\psi_{j,k}^l)_{L^2(\bT^d)}|
	<\infty.
\end{equation}
By Definition~\ref{def:besovspace}, 
it follows that 
$\varphi\in B_{p,p}^s(\bT^d)$ if and only if $\|\varphi\|_{B_{p,p}^s(\bT^d)}<\infty$.
\subsubsection{Notation}
\label{subsec:BesovNotat}
We fix some notation for Besov, H\"older and Zygmund spaces to be used in the
remainder of this paper.
As the (periodic) domain $\bT^d$ does not vary in the subsequent analysis, 
we use the abbreviations 
$B_p^s:=B_{p,p}^s(\bT^d)$, $\rC^\ga:=\rC^\ga(\bT^d)$ and $\cC^\ga:=\cC^\ga(\bT^d)$
for convenience in the following.
Furthermore, 
we will assume that the basis functions in $\mathbf\Psi_w, \mathbf\Psi\subset \rC^\ga$,
are sufficiently smooth with Hölder index $\ga>s$ for given $s>0$, 
and therefore omit the restriction $s\in(0,\ga)$ in the following.
In this case, it holds that $\mathbf \Psi_w$ (and thus $\mathbf\Psi$) is a basis of $B_p^s$ for $p<\infty$, 
see \cite[Theorem 1.37]{TriebelFctSpcDom}.

We recall that for any $s>0$ there holds $\cC^s=B^s_\infty$ (see \cite[Remark 1.28]{TriebelFctSpcDom}), as well as 
$\rC^s=\cC^s$ for $s\in(0,\infty)\setminus\bN$, and $\rC^s\subsetneq\cC^s$ for $s\in\bN$ (see \cite[Section 1.2.2]{TriebelTOFS2}).
By~\eqref{eq:besovnorm} and~\eqref{eq:besovnorminf} we further obtain the continuous embeddings
\begin{equation}\label{eq:besovembedding}
	\begin{split}
		B_p^s\hookrightarrow B_q^t
		&\quad\text{if $1\le p\le q<\infty$ and $s-\frac{d}{p}\ge t-\frac{d}{q}$}, \\
		B_p^s\hookrightarrow B^t_\infty=\cC^t
		&\quad \text{for $t\in\left(0,s-\frac{d}{p}\right]$,}
	\end{split}	
\end{equation}
with the embedding constants in~\eqref{eq:besovembedding} bounded by one (cf. \cite[Chapter 2.1]{TriebelTOFS4}).
\subsection{Besov priors}
\label{subsec:besov-rv}
To introduce Besov space-valued random variables as in \cite{LassasBesov09}, we consider a complete probability space $(\gO,\cA, P)$. 
Following the constructions in \cite{DHS12,AgaDasHel2021,KLSS21},
based on the representation in~\eqref{eq:besovspace}, 
we now define $B_p^s$-valued random variables by replacing the 
$L^2(\bT^d)$-orthogonal projection coefficients
$(\varphi ,\psi_{j,k}^l)_{L^2(\bT^d)}$ with suitable random variables. 
More precisely, 
consider for any $p\in[1,\infty)$ an independent and identically distributed (i.i.d.) sequence 
$X=((X_{j,k}^l), (j,k,l)\in\mathbf{\cI_\Psi})$ of \emph{$p$-exponential} random variables. 
That is, each $X_{j,k}^l:\gO\to\bR$ is $\cA/\cB(\bR)$-measurable with density 
\begin{equation}\label{eq:p-exponential}
	\phi_p(x):=\frac{1}{c_p}\exp\left(-\frac{|x|^p}{\gk}\right), \quad x\in\bR,
	\qquad c_p := \int_\bR \exp\left(-\frac{|x|^p}{\gk}\right) dx,
\end{equation}
where $\gk>0$ is a fixed scaling parameter. 
We recover the normal distribution with variance $\frac{\gk}{2}$ if $p=2$, 
and the Laplace distribution with scaling $\gk$ for $p=1$.
\begin{defi}\cite[Definition 9]{LassasBesov09}\label{def:besovprior}
Let $\mathbf\Psi$ be the $L^2(\bT^d)$-orthogonal wavelet basis as in~\eqref{eq:torusbasis2}, 
let $s>0$, $p\in[1,\infty)$ and let $X=((X_{j,k}^l), (j,k,l)\in\mathbf{\cI_\Psi})$ 
be an i.i.d. sequence of $p$-exponential random variables with density 
$\phi_p$ as in~\eqref{eq:p-exponential}.
Let the random field $b:\gO\to L^2(\bT^d)$ be given by
	\begin{equation}\label{eq:b}
		b(\go):= \sum_{(j,k,l)\in\mathbf{\cI_\Psi}} 
		\eta_j X_{j,k}^l(\go)\psi_{j,k}^l, \quad \go\in\gO,
		\quad\text{where}\quad
		\eta_j := 2^{-j(s+\frac{d}{2}-\frac{d}{p})}, 
		\quad j\in\bN_0.
	\end{equation}
	We call $b$ \rev{a \emph{$B_p^s$-random variable}.} 
\end{defi}

The random variables $b$ from Definition~\ref{def:besovprior} 
are also referred to as \emph{Besov priors} in the literature on inverse problems.
The following regularity results are well-known:
\begin{prop}~\label{prop:besovreg}
	\begin{enumerate}
	\item \rev{(\cite[Lemma 10]{LassasBesov09}, \cite[Proposition 1]{DHS12} ) }
	Let $b$ be a $B_p^s$-random variable for $s>0$ and $p\in[1,\infty)$. 
         Then, the following conditions are equivalent:
		\begin{enumerate}[(i)]
			\item $\|b\|_{B_p^t}<\infty$ holds $P$-a.s.;
			\item $\bE\left(\exp\left(\eps \|b\|_{B_p^t}^p\right)\right)<\infty$,
			\quad$\eps\in(0,\frac{1}{\rev{4} \gk})$;
			\item $t<s-\frac{d}{p}$.
		\end{enumerate}
		\item \cite[Theorem 2.1]{DHS12} 
                If, in addition, $\mathbf\Psi$ forms a basis of $B_p^t$ for a $t<s-\frac{d}{p}$, $t\notin\bN$, 
                then it holds  
		\begin{equation*}
			\bE\left(\exp\left(\eps \|b\|_{\rC^t}\right)\right)<\infty,\quad \eps\in(0,\ol \eps),
		\end{equation*}
		where $\ol \eps>0$ is a constant depending on $p, d, s$ and $t$.
	\end{enumerate}
\end{prop}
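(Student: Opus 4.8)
The plan is to prove the first part by direct computation with the wavelet coefficients, and the second part by comparing the $\rC^t$-norm with a scale of Besov norms and then interpolating the known exponential integrability.

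For part (1), I would first establish the equivalence (i)$\Leftrightarrow$(iii). Using the wavelet characterization~\eqref{eq:besovnorm}, one has
\begin{equation*}
	\|b\|_{B_p^t}^p = \sum_{(j,k,l)\in\cI_{\mathbf\Psi}} 2^{jp(t+\frac{d}{2}-\frac{d}{p})}\,\eta_j^p\,|X_{j,k}^l|^p
	= \sum_{j\in\bN_0} 2^{jp(t-s)} \sum_{k\in K_j,\,l\in\cL_j} |X_{j,k}^l|^p.
\end{equation*}
The inner sum has $|K_j||\cL_j| \simeq 2^{dj}$ i.i.d. terms with $\bE|X_{j,k}^l|^p<\infty$, so by the strong law / a Borel--Cantelli argument the inner sum grows like $2^{dj}$ almost surely, and the outer series converges $P$-a.s. iff $2^{dj}2^{jp(t-s)}$ is summable, i.e. iff $d + p(t-s)<0$, which is exactly $t<s-d/p$. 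For the implication (iii)$\Rightarrow$(ii), I would compute the moment generating function termwise: by independence,
\begin{equation*}
	\bE\exp\bigl(\eps\|b\|_{B_p^t}^p\bigr) = \prod_{j\in\bN_0}\prod_{k,l} \bE\exp\bigl(\eps\, 2^{jp(t-s)}|X_{j,k}^l|^p\bigr),
\end{equation*}
and since $X_{j,k}^l$ has density proportional to $\exp(-|x|^p/\gk)$, one gets $\bE\exp(\lambda|X|^p) = 1/(1-\gk\lambda)^{1/p} \cdot(\text{const})$ type bounds for $\lambda<1/\gk$, valid here because $\eps 2^{jp(t-s)}\le\eps<1/(4\gk)$. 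Taking logarithms, the infinite product converges iff $\sum_j 2^{dj}\log(1 + C\eps 2^{jp(t-s)})<\infty$, and since $\log(1+x)\le x$, this is controlled by $\sum_j 2^{dj}2^{jp(t-s)}<\infty$, again equivalent to (iii); the factor $4$ in the range of $\eps$ gives room for the $k=0$ ``worst'' level and the constant $c_p$. Finally (ii)$\Rightarrow$(i) is immediate since a random variable with finite exponential moment is a.s. finite.

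For part (2), the plan is to use the embedding $B_p^t \hookrightarrow \cC^t = B^t_\infty$ from~\eqref{eq:besovembedding} with embedding constant one, which requires choosing an auxiliary index. Since $t<s-d/p$, pick $\tilde t$ with $t<\tilde t<s-d/p$ and $\tilde t\notin\bN$; then $B_p^{\tilde t}\hookrightarrow \cC^{\tilde t}\hookrightarrow \cC^{t}$, hence $\|b\|_{\rC^t} = \|b\|_{\cC^t}\le C\|b\|_{\cC^{\tilde t}}\le C\|b\|_{B_p^{\tilde t}}$. Now $\|b\|_{\rC^t}$ appears to the first power in the exponent, not the $p$-th, so I would use Young's inequality $\eps x \le \eps^{p/(p-1)}\tfrac{p-1}{p} + \tfrac{1}{p}x^p$ (for $p>1$) to bound $\exp(\eps\|b\|_{\rC^t}) \le e^{c(\eps)}\exp(\tfrac{C^p}{p}\eps\,\|b\|_{B_p^{\tilde t}}^p)$, and then apply part (1)(ii) with $\tilde t$ in place of $t$, which is finite provided $\tfrac{C^p}{p}\eps < 1/(4\gk)$; this fixes $\ol\eps$ depending on $p,d,s,t$ (through $C$ and the choice of $\tilde t$). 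For $p=1$ one argues more directly since $\|b\|_{\rC^t}$ and the $B_1^{\tilde t}$-norm enter with matching powers.

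The main obstacle I anticipate is part (2): one must be careful that the embedding constant is genuinely $1$ (as asserted after~\eqref{eq:besovembedding}) only for the Besov-into-Besov step, whereas the $\cC^{\tilde t}\hookrightarrow\cC^t$ step and the non-integer-smoothness identification $\rC^{\tilde t}=\cC^{\tilde t}$ contribute genuine constants; keeping track of these so that the final threshold on $\eps$ is an honest positive number (and explaining why $\tilde t\notin\bN$ is needed, namely to invoke $\rC^{\tilde t}=\cC^{\tilde t}$ rather than the strict inclusion) is the delicate bookkeeping. Part (1) is essentially a routine second-moment/MGF computation once the coefficient counting $|K_j||\cL_j|\simeq 2^{dj}$ is in place.
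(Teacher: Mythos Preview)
The paper does not prove this proposition itself but cites \cite{LassasBesov09,DHS12}. Your argument for Part~1 is the standard one and is correct in outline (the sign in the log-factor should be $-\log(1-\gk\eps\, 2^{jp(t-s)})$ rather than $\log(1+\cdot)$, but this is cosmetic).

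Part~2, however, has a genuine gap: the embedding $B_p^{\tilde t}\hookrightarrow \cC^{\tilde t}$ you invoke is false. From~\eqref{eq:besovembedding} one has $B_p^{\tilde t}\hookrightarrow \cC^{\tau}$ only for $\tau\le \tilde t-d/p$, i.e., one necessarily loses $d/p$ derivatives (for instance $B_2^1(\bT^1)\simeq H^1(\bT^1)$ embeds into $\rC^{1/2}$ but certainly not into $\rC^1$). Repairing this by taking $\tilde t\ge t+d/p$ forces, via Part~1, the restriction $\tilde t<s-d/p$, hence $t<s-2d/p$: your argument misses the entire range $s-2d/p\le t<s-d/p$. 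There is also a smaller slip in the Young step: with the inequality as you wrote it, the coefficient in front of $\|b\|_{B_p^{\tilde t}}^p$ comes out as $C^p/p$, independent of $\eps$; this is fixable with a scaled Young inequality but does not rescue the range issue.

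The correct route---used in the paper's own proof of the stronger Theorem~\ref{thm:randomtreereg}---is to embed into $B_{q_0}^{t_0}$ with $t_0=t+d/q_0$ and $q_0$ \emph{large}, rather than into $B_p^{\tilde t}$ with the fixed integrability exponent $p$. As $q_0\to\infty$ the loss $d/q_0$ becomes negligible, and one bounds the moments $\bE\|b\|_{\cC^t}^q$ directly (cf.~\eqref{eq:q-moments}--\eqref{eq:moments}) before summing the exponential series. This recovers the full range $t<s-d/p$.
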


\begin{rem}\label{rem:Stronger}

	\rev{Note that $B_p^s$-random variables as defined above only take values in $B_p^t$ for $t<s-\frac{d}{p}$ based on the previous proposition. Nevertheless, we use the notion \emph{$B_p^s$-random variables} in the following, for a clearer emphasize on the  dependence of $\eta_j$ in~\eqref{eq:b} on $s$.}
	
	We derive a considerably stronger version of \cite[Theorem 2.1]{DHS12} in  Theorem~\ref{thm:randomtreereg} below, that implies in particular 
	\begin{equation*}
		\bE\left(\exp\left(\eps \|b\|^p_{\rC^t}\right)\right)<\infty,\quad \eps\in(0,\ol \eps),
	\end{equation*}
	for any $p\ge1$ and some $\ol \eps>0$. In the Gaussian case with $p=2$, 
        this estimate would be a consequence of Fernique's theorem, however, 
        we are not aware of a similar result for arbitrary $p\ge1$ in the literature.
		
	\item 
		We recall from \cite[Theorem 1.37]{TriebelFctSpcDom}   
     	that $\mathbf\Psi$ forms an \emph{unconditional basis} of $B_p^t$ (since $p<\infty$), 
        if the scaling and wavelet functions $\phi$ and $\psi$ 
satisfy $\phi, \psi\in \rC^\ga(\bR^d)$ for $\ga>t>0$ and the vanishing moment condition~\eqref{eq:vanmoments}.
\end{rem}

\subsection{Besov random tree priors}
\label{subsec:besov-tree-rv}
Taking the cue from \cite{KLSS21},
we introduce \emph{Besov random tree priors} 
in this subsection and derive several regularity results 
for this $B_p^s$-valued random variable.
We investigate all results for periodic functions defined on the torus $\bT^d$ in this subsection. 
For the elliptic problem in Section~\ref{sec:randompdes}, 
we will later introduce the corresponding $B_p^s(\cD)$-valued random variables on physical domains 
$\cD\subset\bR^d$ with $\cD\subseteq\bT^d$ by their restrictions from $\bT^d$ (cf. Definition~\ref{def:restrictedbesovprior}). 
The random tree structure in our prior construction 
is based on certain set-valued random variables, so-called \emph{Galton-Watson (GW) trees}. 
For the readers' convenience, 
definitions of discrete trees, GW trees, along with some other useful results, 
are listed in Appendix~\ref{appendix:trees}. 

\begin{defi}\cite[Definition 3]{KLSS21}\label{def:randomtree-prior}
	Let $\mathbf\Psi$, $s>0$, $p\in[1,\infty)$, $X=((X_{j,k}^l), (j,k,l)\in\mathbf{\cI_\Psi})$ 
        and $(\eta_j,j\in\bN_0)$ be as in Definition~\ref{def:besovprior}.
	Let $\mfT$ denote the set of all trees with no infinite node (cf. Definition~\ref{def:trees}) 
        and let $T:\gO\to \mfT$ be a GW tree (cf. Definition~\ref{def:GW-tree}) 
        with offspring distribution $\cP=\textrm{Bin}(2^d, \gb)$ for $\gb\in[0,1]$, and independent of $X$.
	Furthermore, let $\mf I_T$ be the set of wavelet indices associated to $T$ from~\eqref{eq:node-to-index-set}.
	We define the \emph{random tree index set} 
	$\cI_T(\go):=\{(j,k,l)|\; (j,k)\in \mf I_T(\go),\; l\in\cL_j\}$ and
	\begin{equation}\label{eq:randomtreeprior}
		b_T(\go):= \sum_{(j,k,l)\in \cI_T(\go)} \eta_j X_{j,k}^l(\go)\psi_{j,k}^l, \quad \go\in\gO.
	\end{equation}
	We refer to $b_T$ as a \emph{$B_p^s$-random variable with wavelet density $\gb$}.
\end{defi}
\rev{We have depicted a sample of a binomial GW tree and the associated set of wavelet indices $\mf I_T$ for a series expansion in one physical dimension ($d=1$) in Figure~\ref{fig:GWtree}. Recall that for $d=1$ there holds $\cL_j=\{1\}$ for $j\ge 1$.}

\begin{figure}
	\centering
{\small
	\bf \begin{forest}
		for tree={
			align=center,
			edge = dashed
		}
		[{$\varrho=()$ \quad $(0,0)$}, draw, 
		[{$(1)$ \quad $(1,0)$}, draw, edge=solid
		[{$(1, 1)$ \quad $(2,0)$}, draw, edge=solid
		[{(X, X)}], [{$(1, 1, 2)$ \quad $(3,1)$}\\ $\vdots$, draw, edge=solid]
		]
		[{$(1,2)$ \quad $(2,1)$}, draw, edge=solid
		[{(X)}], [{(X)}]
		]
		]
		[{$(2)$ \quad $(1,1)$}, draw, edge=solid
		[{(X)}
		[{(X)}], [{(X)}]
		]
		[{$(2, 2)$ \quad $(2,3)$}, draw, edge=solid
		[{$(2, 2, 1)$ \quad $(3,6)$}\\ $\vdots$, draw, edge=solid], [{(X)}]
		]
		]
		]
	\end{forest}
}
	\caption{\rev{Sample of a binomial GW tree $T$ with offspring distribution $\cP(2, \gb)$ with $\gb=0.5$. Each box corresponds to a node $\mf n$ of the GW tree, displayed are all nodes with length $|\mf n|\le 3$. 
	The left entry in each box is the node $\mf n\in T$, given by a finite sequence of integers. 
	The right entry is the node-to-wavelet coefficient map $\mf n\mapsto \left(|\mf n|, \mf I^2_{d,|\mf n|}\circ\mf I^1_{d,|\mf n|}(\mf n)\right)$ evaluated at $\mf n$, that determines the associated wavelet indices $(j,k)$.
		Starting from the "root node" $\varrho=()$, the two children of each node are eliminated with probability $1-\gb$, and independent of each other. Once a node is eliminated (signified by an "(X)"), all of their offspring nodes are eliminated as well.
		The remaining "surviving" nodes determine the associated random set of active wavelet indices via 
		$\mf I_T:=\left\{\left(|\mf n|, \mf I^2_{d,|\mf n|}\circ\mf I^1_{d,|\mf n|}(\mf n)\right)\big|\; \mf n\in T\right\} = \left\{ (0,0), (1,0), (1,1), (2,0), (2,1), (2,3), (3,1), (3,6)\right\}$. 	
		Note that each $\mf n\in T$ is connected to the root node $\varrho$ through their ancestors, as indicated by the solid lines.
	}}
	\label{fig:GWtree}	
\end{figure}

\begin{rem}\label{rem:Besov-random-tree}
		Definition~\ref{def:randomtree-prior} actually slightly deviates from \cite[Definition 3]{KLSS21}. 
		By definition of $\cI_T(\go)$, 
		we include the constant function $\psi_{0,0}^{(0,\dots,0)}\equiv 1\in L^2(\bT^d)$ 
                in the series expansion~\eqref{eq:randomtreeprior}.
		Of course, adding the random constant $X_{0,0}^{(0,\dots,0)}$ 
                does not affect the spatial regularity or integrability of $b_T$.
		However, in our definition, series~\eqref{eq:randomtreeprior} 
		has a natural interpretation as orthogonal expansion of a random function 
                with respect to the (deterministic, fixed) basis $\mathbf \Psi$.
	The tree structure in the "active" (i.e., with index in $\cI_T$) coefficients in the wavelet representation of $b_T$
	gives rise to random fractals on $\bT^d$, that occur whenever 
	the tree $T$ in Definition~\ref{def:randomtree-prior} does not terminate after a finite number of nodes. 
	It follows by Lemma~\ref{lem:extinction}, that the latter event occurs with positive probability if $\gb\in(2^{-d},1]$. 
        In this case the Hausdorff dimension of the fractals is $d+\log_2(\gb)\in(0,d]$, see \cite[Section 3]{KLSS21} for further details.
\end{rem}
Examples of realizations of a $B_p^s$-random variable on $\bT^2$ with varying wavelet density 
$\gb$ are shown in Figure~\ref{fig:samples}.
\begin{figure}[t]
	\centering
	\subfigure{\includegraphics[scale = 0.31]{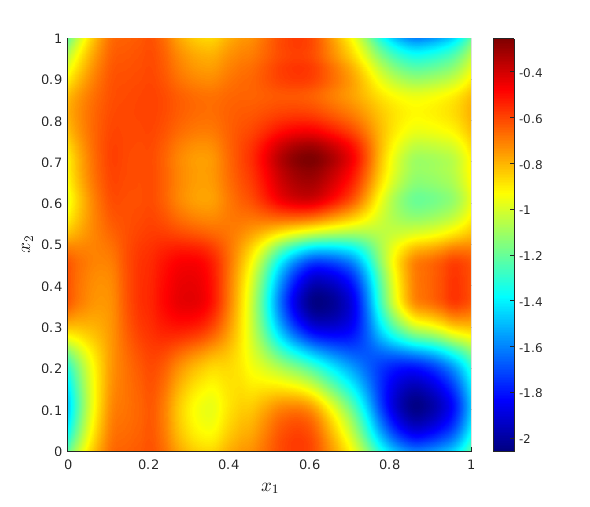}}
	\subfigure{\includegraphics[scale = 0.31]{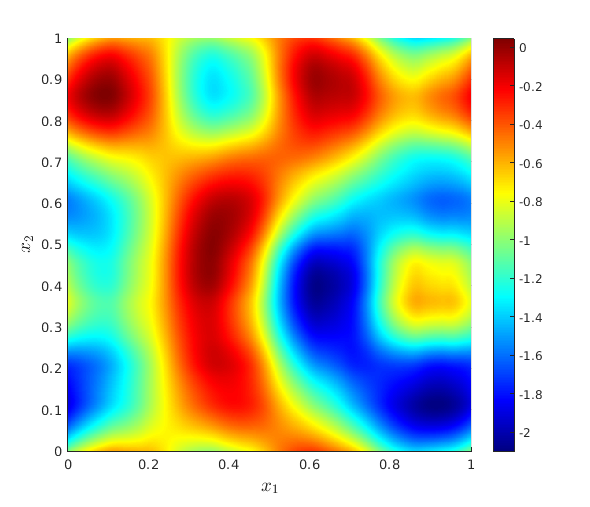}}
	\subfigure{\includegraphics[scale = 0.31]{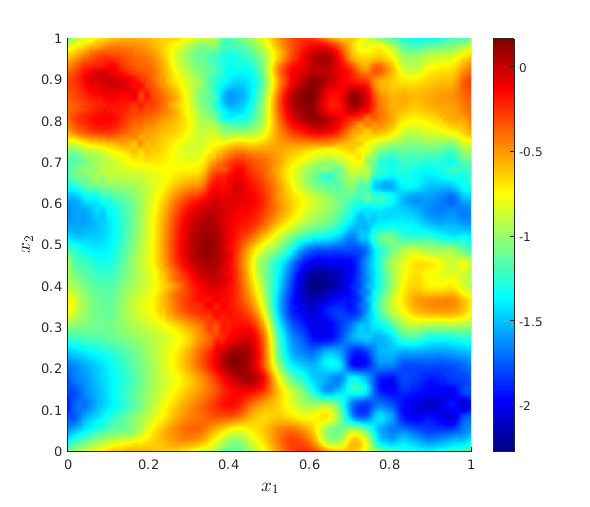}}
	\subfigure{\includegraphics[scale = 0.31]{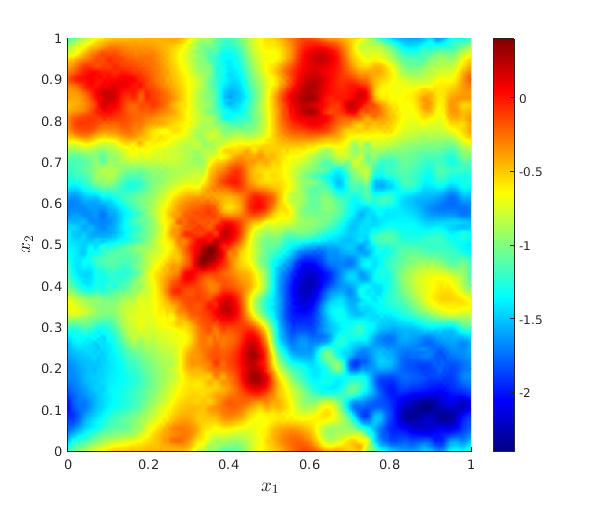}}
	\subfigure{\includegraphics[scale = 0.31]{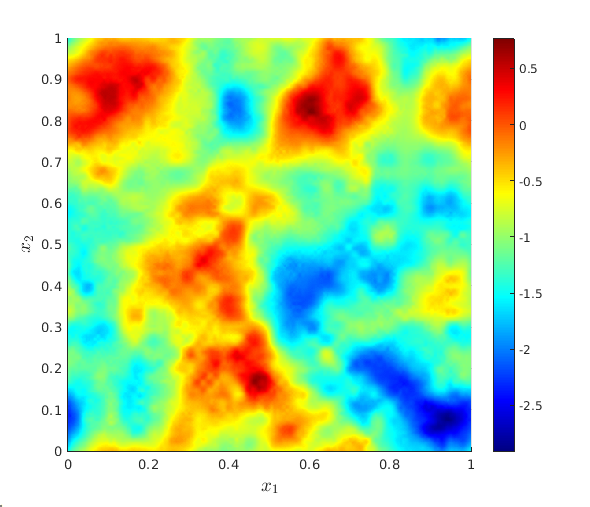}}
	\subfigure{\includegraphics[scale = 0.31]{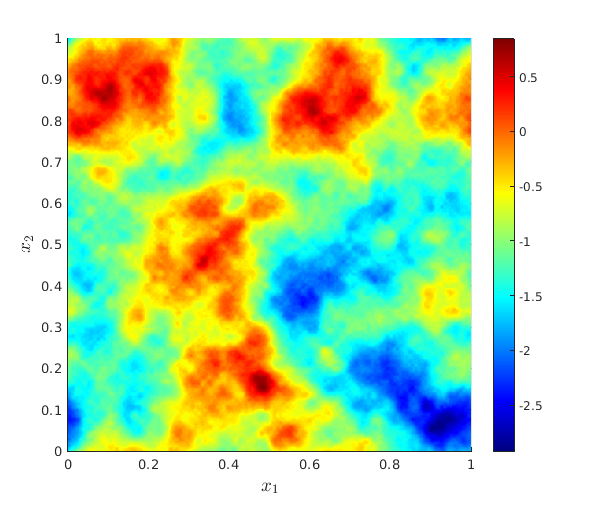}}
	\caption{
		Samples of a $B_p^s$-valued random variable on $\bT^2=[0,1]^2$ with 
		$s=p=2$ and wavelet density $\gb\in\{\frac{1}{4}, \frac{1}{3}, \frac{1}{2}\}$ 
		(top row, from left to right)
		and 
		$\gb\in\{\frac{2}{3}, \frac{3}{4}, 1\}$
		(bottom row, from left to right).
		All samples are based on \rev{DB(5)-wavelets and the same array of random numbers, that} have been sampled with a spatial resolution of $2^9\times 2^9$ equidistant grid points, 
		and the expansion in ~\eqref{eq:randomtreeprior} was truncated at 
		$N=9$ levels of dyadic subdivision (cf. Subsection~\ref{subsec:truncation}).
		By fixing the array of random numbers, the spatial grid and $N$, 
		the depicted "evolution" in the panels highlights the effect of an increasing wavelet density $\gb$. \rev{Note that the case $p=2$ and $\gb=1$ in the bottom right corresponds to a Gaussian prior.} 
	}
	\label{fig:samples}
\end{figure}
\rev{
To treat elliptic inverse problems with $b_T$ as log-diffusion coefficient, 
we detail the corresponding probability space of parameters.
Let $\bQ_0$ denote the univariate, $p$-exponential measure on $(\bR, \cB(\bR))$ 
of the random variables $X_{j,k}^l$ with Lebesgue density as in~\eqref{eq:p-exponential}.
The product-probability space of the $p$-exponentials $X$ is given by $(\gO_p, \cA_p, \bQ_p)$, 
where 
\begin{equation*}
	\gO_p:=\bR^\bN,\quad  
	\cA_p:=\bigotimes_{n\in\bN} \cB(\bR),\quad \text{and} \quad
	\bQ_p:=\bigotimes_{n\in\bN} \bQ_0.
\end{equation*}
Now let $s>0$ and $p\in[1,\infty)$ be fixed such that $s>\frac{d}{p}$. 
We define the weighted $\ell^p$-spaces 
\begin{equation}\label{eq:lp-weighted}
	\ell_s^p:=\left\{ x=\left(x_{j,k}^l, (j,k,l)\in\mathbf{\cI_\Psi}\right)\in\bR^\bN|\;
	\|x\|_{s,p}<\infty\right\},
\end{equation}
where 
\begin{equation}
	\|x\|_{s,p}
	:= \left(\sum_{(j,k,l)\in\mathbf{\cI_\Psi}} 2^{-jps}|x_{j,k}^l|^p \right)^{1/p}.
\end{equation}
Then $(\ell_s^p, \left\|\cdot\right\|_{s,p})$ is a separable Banach space, for $1\le p<\infty$. 
Moreover, we observe that for $X\sim \bQ_p$ it holds
\begin{align*}
	\bE(\|X\|_{s,p}^p)
	\le
	\sum_{(j,k,l)\in\mathbf{\cI_\Psi}} 2^{-jps}\bE(|X_{j,k}^l|^p) 
	\le 
	C \sum_{j=0}^\infty 2^{-jps} 2^{dj} (2^d-1) 
	\le 
	C \sum_{j=0}^\infty 2^{-jp(s-\frac{d}{p})}
	<\infty, 
\end{align*}
since $s>\frac{d}{p}$, thus $\bQ_p$ is concentrated on $\ell_s^p$. 
\rev{We therefore} regard $(\ell_s^p, \cB(\ell_s^p), \bQ_p)$ as the probability space of random coefficient sequences $X$ in the expansions~\eqref{eq:b} and~\eqref{eq:randomtreeprior}. 
The set-valued random variable $T$ is a GW tree, and hence takes values in the Polish space $(\mfT, \gd_{\mfT})$ of all trees with no infinite node. The metric $\gd_\mfT$ and the associated Borel $\gs$-algebra $\cB(\mfT)$ with respect to $\mfT$ are stated explicitly in Definition~\ref{defi:tree-algebra} in the Appendix. 
The image measure $\bQ_T$ of the GW tree $T$ on $(\mfT, \cB(\mfT))$ then solely depends on the parameters $\gb$ and $d$ of the offspring distribution $\cP=\textrm{Bin}(2^d, \gb)$, and is given in Equation~\eqref{eq:tree-measure} in the Appendix. Hence, the parameter probability space of GW trees is given by $(\mfT, \cB(\mfT), \bQ_T)$.
To combine the random coefficients $X$ with the GW tree $T$, we define the cartesian product 
$\gO:=\ell_s^p\times \mfT$ and equip $\gO$ with the metric 
\begin{equation*}
	d_\gO(\as{(x_1, {\bf t_1}),(x_2, {\bf t_2})}) := \|x_1-x_2\|_{s,p}+\gd_\mfT({\bf t_1}, {\bf t_2}). 
\end{equation*}
\begin{prop}
	The space $(\gO, d_\gO)$ is Polish with Borel $\gs$-algebra given by $\cB(\gO)=\cB(\ell_s^p\times \mfT)=\cB(\ell_s^p)\otimes\cB(\mfT)$.
\end{prop}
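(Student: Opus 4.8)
The plan is to assemble the claim from two standard facts: first, that a finite product of Polish spaces, metrized by the sum of the factor metrics, is again Polish; and second, that the Borel $\gs$-algebra of a product of two \emph{separable} metric spaces coincides with the tensor product of the factor Borel $\gs$-algebras. I would begin by recording that both factors are Polish. The space $(\ell_s^p,\|\cdot\|_{s,p})$ has already been observed to be a separable Banach space for $1\le p<\infty$, hence is Polish; and $(\mfT,\gd_\mfT)$, the space of trees with no infinite node, is Polish by construction (see Definition~\ref{defi:tree-algebra} and Appendix~\ref{appendix:trees}), being the state space of the GW tree $T$.

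Next I would check that $d_\gO$ is a metric on $\gO=\ell_s^p\times\mfT$ --- symmetry, non-negativity with $d_\gO=0$ only on the diagonal, and the triangle inequality all follow immediately from the corresponding properties of $\|\cdot\|_{s,p}$ and $\gd_\mfT$ --- and that it induces the product topology: for sequences one has $d_\gO((x_n,{\bf t}_n),(x,{\bf t}))\to 0$ if and only if $\|x_n-x\|_{s,p}\to 0$ and $\gd_\mfT({\bf t}_n,{\bf t})\to 0$, and analogously a sequence in $\gO$ is $d_\gO$-Cauchy if and only if both component sequences are Cauchy. From this, completeness of $(\gO,d_\gO)$ is inherited from completeness of the two factors, and separability follows by taking the countable family of pairs drawn from a countable dense subset of $\ell_s^p$ and a countable dense subset of $\mfT$, which is $d_\gO$-dense in $\gO$. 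Hence $(\gO,d_\gO)$ is Polish, and in particular $\cB(\gO)=\cB(\ell_s^p\times\mfT)$, the Borel $\gs$-algebra of the topological product.

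It remains to identify $\cB(\ell_s^p\times\mfT)$ with $\cB(\ell_s^p)\otimes\cB(\mfT)$. The inclusion $\cB(\ell_s^p)\otimes\cB(\mfT)\subseteq\cB(\ell_s^p\times\mfT)$ is immediate, since the two coordinate projections are continuous, hence Borel measurable. For the converse, since $\ell_s^p$ and $\mfT$ are separable metric, they are second countable; fixing countable bases $\cU$ of $\ell_s^p$ and $\cV$ of $\mfT$, the sets $U\times V$ with $U\in\cU$, $V\in\cV$ form a countable base of the product topology, so every open subset of $\gO$ is a countable union of such rectangles and therefore lies in $\cB(\ell_s^p)\otimes\cB(\mfT)$; consequently $\cB(\ell_s^p\times\mfT)\subseteq\cB(\ell_s^p)\otimes\cB(\mfT)$.

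I do not expect a genuine obstacle here: the proof is a routine verification, and the only point deserving attention is that the equality of the product and Borel $\gs$-algebras relies on separability of the factors --- it can fail for non-separable factors --- which is precisely why separability of both $\ell_s^p$ and $\mfT$ was established first.
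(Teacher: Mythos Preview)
Your proof is correct and follows essentially the same approach as the paper: the paper simply cites standard references (Aliprantis--Border) for the two facts you verify by hand, namely that a product of Polish spaces is Polish under the sum metric and that the Borel $\sigma$-algebra of a product of separable metric spaces equals the product $\sigma$-algebra. Your version is just the unpacked form of those citations.
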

\begin{proof}
	By Lemma~\cite[Lemma 2.1]{abraham2015GW-Trees} the metric space $(\mfT, \gd_\mfT)$ with $\gd_\mfT$ given in~\eqref{eq:tree-metric} is complete and separable. Therefore, separability and completeness of $(\gO, d_\gO)$ follows by \cite[Corollary 3.39]{AB06}. 
	Moreover, $\cB(\gO)=\cB(\ell_s^p\times\as{\mfT})=\cB(\ell_s^p)\otimes\cB(\mfT)$ holds by \cite[Theorem 4.44]{AB06}	
\end{proof}
We are now ready to define the probability space associated to the $\ell_s^p\times \mfT$-valued random variable $(X, T)$: Let $(\gO, \cA, \bP)$ be the product probability given by 
\begin{equation}
	\gO:=\ell_s^p\times \mfT,\quad  
	\cA:=\cB(\ell_s^p)\otimes\cB(\mfT),\quad \text{and} \quad
	\bP:=\bQ_p \otimes \bQ_T.
\end{equation}
We remark that the product structure of the measure $\bP=\bQ_p \otimes \bQ_T$ 
is tantamount to stochastic independence of $X$ and $T$.
It still remains to identify a realization of the random variable $(X,T)$ 
with the corresponding random tree prior $b_T$. To this end, we consider the canonical mapping
\begin{equation}
	b_T:\gO \to L^2(\bT^d),\quad 
	\go\mapsto \sum_{(j,k,l)\in \cI_T(\go)} \eta_j X_{j,k}^l(\go)\psi_{j,k}^l.
\end{equation}
The map $b_T:\gO \to L^2(\bT^d)$ is indeed well-defined since $\|b_T\|_{L^2(\bT^d)}<\infty$ holds due to $s>\frac{d}{p}$. Moreover, $b_T$ is $\cA/\cB(L^2(\bT^d))$-measurable, as we show in Proposition~\ref{prob:measurability} below. 
As $b_T$ is a $L^2(\bT^d)$-valued random variable, we may define the pushforward probability measure of $b_T$ via
\begin{equation}\label{eq:beson-measure}
	b_T\#\bP(B):=\bP(b_T^{-1}(B)),\quad B\in \cB(L^2(\bT^d)).
\end{equation}
Thus, the associated probability space of $B_p^s$-random variables $b_T$ with wavelet density $\gb$ \rev{is}
$$(L^2(\bT^d),\,\cB(L^2(\bT^d)),\,b_T\#\bP).$$
\begin{rem}\label{rem:concentration}
	We know from Proposition~\ref{prop:besovreg} that $b_T\#\bP$ is concentrated on $B_p^t$ for any $t\in(0, s-\frac{d}{p})$. A more refined result that concentrates $b_T\#\bP$ on Besov spaces $B_q^t$ for $q\ge 1$ with smoothness index $t=t(s,d,p,\gb,q)$ is given in Theorem~\ref{thm:randomtreereg} below.
\end{rem}
}
\rev{
We recall at this point that we have assumed Hölder-regularity 
$\mathbf\Psi\subset \rC^{\ga}$ for some $\ga\ge1$ 
in Subsection~\ref{subsec:besovspaces}. 
For the remainder of this article, we will from now on implicitly assume that the parameter $s>0$ of a $B_p^s$-random variable satisfies $\ga\ge s>0$ for the sake of presentation.
}

\begin{prop}\label{prob:measurability}
Let $s>\frac{d}{p}$, $\gb\in[0,1]$, and 
let $b_T$ be a $B_p^s$-random variable with wavelet density $\gb$.
Then $b_T:\gO\to \rC(\bT^d)$ and $b_T$ is (strongly) $\cA/\cB(\rC(\bT^d))$-measurable.
\end{prop}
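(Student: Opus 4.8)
The plan is to establish first that sample paths of $b_T$ lie in $\rC(\bT^d)$ almost surely by exploiting the regularity results already available, and then to upgrade almost-sure statements to genuine measurability by writing $b_T$ as a pointwise limit of manifestly measurable truncations. For the first point, I would invoke Proposition~\ref{prop:besovreg}(1): since $s>\frac{d}{p}$, one may pick $t$ with $\frac{d}{p}<s$ and $0<t<s-\frac{d}{p}$, so that $b$ (the full, untruncated Besov prior) has $\|b\|_{B_p^t}<\infty$ $P$-a.s. By the embedding~\eqref{eq:besovembedding}, $B_p^t\hookrightarrow \cC^{t'}$ for $t'\in(0,t-\frac{d}{p}]$ — one needs $t>\frac{d}{p}$ here, so in fact one should choose $t\in(\frac{d}{p},s-\frac{d}{p})$, which requires $s>\frac{2d}{p}$; to cover the full range $s>\frac{d}{p}$ I would instead argue directly that $\sum_{(j,k,l)}\eta_j|X_{j,k}^l|\,\|\psi_{j,k}^l\|_{L^\infty}<\infty$ a.s., using $\|\psi_{j,k}^l\|_{L^\infty}\lesssim 2^{dj/2}$ and $\eta_j=2^{-j(s+\frac d2-\frac dp)}$ together with the fact that at level $j$ there are $O(2^{dj})$ indices and $\bE|X_{j,k}^l|^p<\infty$ uniformly; a Borel–Cantelli / summability argument then gives absolute, uniform convergence of the series $\sum\eta_j X_{j,k}^l\psi_{j,k}^l$ in $\rC(\bT^d)$ along the surviving index set $\cI_T\subseteq\mathbf{\cI_\Psi}$, a fortiori. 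Hence $b_T(\go)\in\rC(\bT^d)$ for $\bP$-a.e.\ $\go$, and after modifying $b_T$ on the null set (or noting the construction already forces convergence), $b_T:\gO\to\rC(\bT^d)$.

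For strong measurability I would use the standard device: define the finite partial sums
\begin{equation*}
  b_T^{(N)}(\go):=\sum_{\substack{(j,k,l)\in\cI_T(\go)\\ j\le N}}\eta_j X_{j,k}^l(\go)\,\psi_{j,k}^l,\qquad N\in\bN_0.
\end{equation*}
Each $b_T^{(N)}$ takes values in the finite-dimensional space $V_{N+1}\subset\rC(\bT^d)$, hence is automatically strongly measurable once it is weakly (equivalently, componentwise) measurable. Componentwise, the coefficient attached to $\psi_{j,k}^l$ is $\eta_j X_{j,k}^l(\go)\,\indi_{\{(j,k)\in\mf I_T(\go)\}}$, a product of the $\cB(\ell_s^p)$-measurable map $X\mapsto X_{j,k}^l$ and the indicator of the event $\{(j,k)\in\mf I_T\}$. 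The latter is $\cB(\mfT)$-measurable: membership of a fixed node in a tree $\mathbf t\in\mfT$ is a measurable function of $\mathbf t$ for the tree $\gs$-algebra of Definition~\ref{defi:tree-algebra} (this is where I would cite the Appendix, in particular the description of $\cB(\mfT)$ and the node-to-index map $\mf I^1,\mf I^2$ from~\eqref{eq:node-to-index-set}). Therefore each $b_T^{(N)}$ is $\cA/\cB(\rC(\bT^d))$-measurable. By the uniform-convergence statement of the first paragraph, $b_T^{(N)}(\go)\to b_T(\go)$ in $\rC(\bT^d)$ for $\bP$-a.e.\ $\go$; a pointwise (a.s.) limit of strongly measurable maps into a Banach space is strongly measurable (Pettis), so $b_T$ is strongly $\cA/\cB(\rC(\bT^d))$-measurable.

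The main obstacle I anticipate is not the analytic convergence — that is routine given Proposition~\ref{prop:besovreg} and the wavelet bounds — but the bookkeeping at the interface between the two randomness sources: one must be careful that the index set $\cI_T$ is itself random, so that $b_T^{(N)}$ is a random sum over a random (but finite, by no-infinite-node) set of indices, and check measurability of the selection $\go\mapsto \cI_T(\go)\cap\{j\le N\}$ as a random finite subset of $\mathbf{\cI_\Psi}$. This reduces to the measurability of $\{(j,k)\in\mf I_T\}$ in $\mathbf t$, which I would pin down via the explicit metric $\gd_\mfT$ and the fact (Appendix, cf.\ \cite{abraham2015GW-Trees}) that the maps $\mathbf t\mapsto \indi_{\{\mf n\in\mathbf t\}}$ are continuous, hence Borel, on $(\mfT,\gd_\mfT)$ for each fixed node $\mf n$. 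With that in hand the proof assembles cleanly; a secondary minor point is to handle the borderline range $\frac dp<s\le\frac{2d}{p}$ by the direct summation estimate rather than by a single Besov embedding, as noted above.
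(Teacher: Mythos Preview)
Your overall strategy---establish $b_T\in\rC(\bT^d)$ $P$-a.s., then obtain measurability by writing the series over indicators $\indi_{\{\mf n\in T\}}$ times coefficients times wavelets and passing to the limit---is exactly the paper's approach. The measurability half of your argument is correct and matches the paper's reasoning.

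However, your regularity argument has a gap. The crude $\ell^1$ bound you propose,
\[
\sum_{(j,k,l)\in\mathbf{\cI_\Psi}}\eta_j\,|X_{j,k}^l|\,\|\psi_{j,k}^l\|_{L^\infty}
\;\lesssim\;
\sum_{j\ge 0} 2^{dj}\cdot 2^{-j(s-d/p)}\cdot(\text{typical }|X|),
\]
only converges when $s>d+\tfrac{d}{p}$, not for the full range $s>\tfrac{d}{p}$; the factor $2^{dj}$ (counting all indices at level $j$) is too crude. One can repair this by using the bounded overlap of the $\psi_{j,k}^l$ at each fixed scale together with a tail bound on $\max_{k,l}|X_{j,k}^l|$, but there is no need: the paper simply invokes the second part of Proposition~\ref{prop:besovreg}, which already gives $b\in\rC^t$ $P$-a.s.\ for any $t\in(0,s-\tfrac{d}{p})\setminus\bN$, and this only requires $s>\tfrac{d}{p}$. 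That result transfers to $b_T$ because $\cC^t=B_\infty^t$ has the wavelet norm~\eqref{eq:besovnorminf}, which is monotone under deletion of coefficients, so $\|b_T\|_{\cC^t}\le\|b\|_{\cC^t}$ and hence $\|b_T\|_{\rC^t}\lesssim\|b\|_{\rC^t}$ for non-integer $t$. You overlooked this direct route when you dismissed Proposition~\ref{prop:besovreg} after noticing that part~(1) plus the embedding needs $s>\tfrac{2d}{p}$; part~(2) avoids that restriction entirely.
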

\begin{proof}
Fix a $t\in(0,s-\frac{d}{p})$ such that $t\notin\bN$. The second part of Proposition~\ref{prop:besovreg} then shows that $b_T\in {\rm C}^t $ holds $P$-a.s., and thus $b_T:\gO\mapsto \rC(\bT^d)$ follows, after possibly modifying $b_T$ on a $P$-nullset.
As in Appendix~\ref{appendix:trees}, we denote by $\cU$ the set of \rev{all finite sequences} in $\bN$ and 
introduce the subset $\cU_{\rm Bin}\subset\cU$ with entries in $\{1,\dots, 2^d\}$ 
as
	\begin{equation*}
		\cU_{\rm Bin}:=\{\mf n\in\cU|\, {\mf n}_i\in\{1,\dots, 2^d\} \text{ for $i\in\{1,\dots, |\mf n|\}$ }\}.
	\end{equation*}
Note that $T(\go)\subset \cU_{\rm Bin}$ holds $P$-a.s., since $\cP=\textrm{Bin}(2^d, \gb)$. 
Now let $\mf I_{d,j}$ be as in~\eqref{eq:node-to-index-map} and 
recall from Appendix~\ref{appendix:wavelet-trees} that 
$(j,k)\in\mf I_T(\go)$ if and only if there is $\mf n\in T(\go)$ 
such that $(j,k)=(|\mf n|, \mf I_{d,|\mf n|}(|\mf n|))$. 
Hence, we may rewrite the series expansion~\eqref{eq:randomtreeprior} as
	\begin{align*}
		b_T(\go)
		=
		\sum_{(j,k)\in\mf I_T(\go)} \eta_j \sum_{l\in\cL_j} X_{j,k}^l(\go)\psi_{j,k}^l
		=
		\sum_{\mf n\in\cU_{\rm Bin}} \indi_{\{\mf n\in T(\go)\}} \eta_{|\mf n|} 
		\sum_{l\in\cL_{|\mf n|}} X_{|\mf n|,\mf I_{d,|\mf n|}(|\mf n|)}^l(\go)\psi_{|\mf n|,\mf I_{d,|\mf n|}(|\mf n|)}^l.
	\end{align*}
As $T:\gO\to\mfT$ is $\cA/\cB(\mfT)$-measurable, 
it holds that $\indi_{\{\mf n\in T(\cdot)\}}:\gO\to\{0,1\}$ is measurable for any fixed $\mf n\in\cU_{\rm Bin}$. 
Also, the $X_{j,k}^l$ are real-valued random variables and 
$\psi_{j,k}^l\in \rC(\bT^d)$ by assumption.
Thus, $b_T:\gO\to \rC(\bT^d)$ is measurable, 
and strongly measurable as $\rC(\bT^d)$ is separable.
\end{proof}

\medskip

More insight in the pathwise regularity of Besov random tree priors, 
in particular with regard to their Hölder regularity, 
is obtained by the following result.

\begin{thm}\label{thm:randomtreereg}
	Let $b_T$ be a $B_p^s$-random variable 
        with wavelet density $\gb=2^{\gg-d}$ as in Definition~\ref{def:randomtree-prior} with $\gg\in(-\infty, d]$.
	\begin{enumerate}[1.)]
		\item It holds that $b_T\in L^q(\gO; {B^t_q})$, and hence $b_T\in B_q^t$ $P$-a.s., 
                      for all $t>0$ and $q\ge1$ such that $t < s+ \frac{d-\gg}{q} - \frac{d}{p}$.
		\item Let $s-\frac{d}{p}>0$ and $t\in(0,s-\frac{d}{p})$. Then there is a $\eps_p>0$ such that 
		\begin{equation*}
			\bE\left(\exp\left(\eps \|b\|_{\cC^t}^p\right)\right) < \infty,\quad \eps\in(0,\eps_p),
		\end{equation*}
		In particular, it holds $b_T\in L^q(\gO; \cC^t)$ for any $q\ge1$.
		\item Let $q\ge 1$ and $s-\frac{d}{p}-\frac{\min(\gg,0)}{q}>0$. 
                For any $t\in(0,s-\frac{d}{p}-\frac{\min(\gg,0)}{q})$ it holds $b_T\in L^q(\gO; \cC^t)$. 
	\end{enumerate}   
\end{thm}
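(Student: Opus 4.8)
The plan is to deduce statement~3.) directly from items~1.) and~2.), distinguishing the subcritical branching regime $\gg<0$ (where the mean offspring $2^{\gg}<1$, the tree is $P$-a.s.\ finite, and the extra smoothing afforded by part~1.) can be exploited) from the regime $\gg\ge 0$ (covered by part~2.)). Throughout, fix $q\ge 1$ with $\gt:=s-\tfrac dp-\tfrac{\min(\gg,0)}{q}>0$ and fix $t\in(0,\gt)$; the goal is $b_T\in L^q(\gO;\cC^t)$.

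For $\gg\ge 0$ one has $\min(\gg,0)=0$, so the hypothesis reads $s-\tfrac dp>0$ and the admissible range is $t\in(0,s-\tfrac dp)$ --- precisely the setting of statement~2.), whose concluding clause already asserts $b_T\in L^q(\gO;\cC^t)$ for every $q\ge 1$. (If one wishes to invoke only the exponential-moment bound, one uses that $x\mapsto\exp(\eps x^p)$ dominates every power $x^q$ up to a multiplicative constant, so $\bE\big(\exp(\eps\|b_T\|_{\cC^t}^p)\big)<\infty$ forces $\bE(\|b_T\|_{\cC^t}^q)<\infty$.)

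For $\gg<0$ one has $\min(\gg,0)=\gg$ and $\gt=s-\tfrac dp-\tfrac{\gg}{q}$, and here I would interpose an auxiliary Besov smoothness. Choose $\tilde t$ with $t+\tfrac dq<\tilde t<s+\tfrac{d-\gg}{q}-\tfrac dp$, which is possible since the length of this open interval equals $\gt-t>0$, and moreover $\tilde t>\tfrac dq>0$. As $\tilde t<s+\tfrac{d-\gg}{q}-\tfrac dp$, part~1.) applied with integrability index $q$ and smoothness $\tilde t$ gives $b_T\in L^q(\gO;B_q^{\tilde t})$. Since $0<t<\tilde t-\tfrac dq$, the Besov embedding~\eqref{eq:besovembedding} yields $B_q^{\tilde t}\hookrightarrow\cC^t$ with embedding constant at most one; composing $b_T$ with this continuous embedding produces a strongly measurable $\cC^t$-valued map satisfying $\|b_T\|_{L^q(\gO;\cC^t)}\le\|b_T\|_{L^q(\gO;B_q^{\tilde t})}<\infty$, i.e.\ $b_T\in L^q(\gO;\cC^t)$. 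The two regimes overlap consistently at $\gg=0$ and together exhaust $\gg\in(-\infty,d]$, completing the argument.

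I do not expect a genuine obstacle at this stage, statement~3.) being in essence a repackaging of items~1.) and~2.): the only points requiring care are that the parameters of the Besov embedding~\eqref{eq:besovembedding} line up --- this is what produces the $\tfrac dq$-loss and hence the term $\tfrac{\min(\gg,0)}{q}$ (rather than $\gg$) in the admissible range of $t$ --- and that strong measurability into the non-separable Hölder--Zygmund space $\cC^t=B^t_\infty$ is not taken for granted but inherited, in the case $\gg<0$ from the strong $B_q^{\tilde t}$-measurability supplied by part~1.), and in the case $\gg\ge 0$ as part of the assertion of part~2.). The substance of the theorem lies entirely in parts~1.) and~2.).
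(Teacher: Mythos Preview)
Your argument for part~3.) is correct and is essentially the paper's own: both split on the sign of $\gg$, invoke part~2.) when $\gg>0$ (resp.\ $\gg\ge 0$), and for $\gg\le 0$ (resp.\ $\gg<0$) pass through the embedding $B_q^{\tilde t}\hookrightarrow\cC^t$ combined with the $L^q(\gO;B_q^{\tilde t})$-bound, which is the content of part~1.). The only cosmetic difference is that the paper takes the endpoint $\tilde t=t+\tfrac dq$ (permitted by~\eqref{eq:besovembedding}) and quotes the intermediate estimate~\eqref{eq:q-moments} from the proof of part~2.) rather than re-invoking part~1.) as a black box.
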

\begin{proof}
	1.) For given $q\ge1$ and $t>0$ it holds by~\eqref{eq:besovnorm} that   
	\begin{align*}
		\|b_T\|^q_{B^t_q}
		=
		\sum_{(j,k,l)\in \cI_T(\go)} 
		2^{jq(t+\frac{d}{2}-\frac{d}{q})} \eta_j^q |X_{j,k}^l(\go)|^q 
		= 
		\sum_{(j,k,l)\in \cI_T(\go)} 
		2^{jq(t - \frac{d}{q} -s + \frac{d}{p})} 
		|X_{j,k}^l(\go)|^q.
	\end{align*}
	For any given $j\in\bN$, by Definition~\ref{def:randomtree-prior}, 
        the number of nodes $v(j)$ on scale $j$ in the random tree $T$ is binomial distributed (conditional on $v(j-1)$) as 
	\begin{equation*}
		v(j):=\#\{k\in K_j|(j,k,l)\in \cI_T\}
		\sim\text{Bin}(2^dv(j-1), 2^{\gg-d}),
	\end{equation*}
	with initial value $v(0)=1$. 
	Now let $(X_{j,m}, j\in\bN_0, m\in\bN)$ be an i.i.d. sequence of $p$-exponential random variables, independent of $v(j)$ for any $j\in\bN$, \rev{and recall that $l\in\cL_j$ with $|\cL_0|=2^d$ and $|\cL_j|=2^d-1$ for $\in\bN$.}
	We obtain by Fubini's theorem, Wald's identity, $\bE(v(j))=(2^d\gb)^j=2^{j\gg}$, and since  $\bE\left(|X_{j,m}|^q\right)<\infty$ for any $q>0$ that
	\begin{align*}
		\bE\left(
		\|b_T\|^q_{B^t_q}
		\right)
		&=
		\bE
		\left(
		\sum_{j=0}^\infty 
		2^{jq(t - \frac{d}{q} -s + \frac{d}{p})}\sum_{m=1}^{(2^d-1)v(j)}|X_{j,m}|^q 
		\right) 
		+
		\bE\left(|X_{0,{2^d}}|^q\right)
		\\
		&= 
		\sum_{j=0}^\infty 
		2^{jq(t - \frac{d}{q} -s + \frac{d}{p})}  
		\bE\left((2^d-1)v(j)\right)
		\bE\left(|X_{j,m}|^q\right) 
		+
		\bE\left(|X_{0,{2^d}}|^q\right)
		\\
		&\le
		2^d\bE\left(|X_{1,1}|^q\right)
		\sum_{j=0}^\infty 
		2^{jq(t - \frac{d}{q} -s + \frac{d}{p}+\frac{\gg}{q})}, 
	\end{align*}
	with $\bE\left(|X_{1,1}|^q\right)<\infty$. 
        The series converges if $t < s+ \frac{d-\gg}{q} - \frac{d}{p}$, 
        in which case $b_T\in L^q(\gO; {B^t_q})$, 
        and hence $b_T\in B^t_q$ holds $P$-a.s. 
	
	2.) Now let $q_0\ge q\ge 1$, $t_0>\frac{d}{q_0}$ and 	
           $t=t_0-\frac{d}{q_0}$,  
	   so that $B_{q_0}^{t_0}\hookrightarrow \cC^t$ holds by~\eqref{eq:besovembedding}.
	   The embedding follows by a direct comparison of the norms 
           in~\eqref{eq:besovnorm},~\eqref{eq:besovnorminf} with $t=t_0-\frac{d}{q_0}$, 
           and also shows that the corresponding embedding constant $C_0>0$ is bounded by $C_0\le 1$.

	We obtain with Hölder's inequality and analogously to the first part the estimate
	\begin{equation}
		\begin{split}\label{eq:q-moments}
			\|b_T\|^q_{L^q(\gO; \cC^t)}
			&\le 
			\bE\left( \|b_T\|^{q_0}_{\cC^t} \right)^{\frac{q}{q_0}} 
                        \\
			&\le 
			\bE\left(
			\|b_T\|^{q_0}_{B_{q_0}^{t_0}}
			\right)^{\frac{q}{q_0}} \\
			&\le 
			2^{\frac{dq}{q_0}}\bE\left(|X_{1,1}|^{q_0}\right)^{\frac{q}{q_0}} 
			\left(\sum_{j=0}^\infty 
			2^{jq_0(t_0 - \frac{d}{q_0} -s + \frac{d}{p}+\frac{\gg}{q_0})}
			\right)^{\frac{q}{q_0}} \\
			&=
			2^{\frac{dq}{q_0}}\bE\left(|X_{1,1}|^{q_0}\right)^{\frac{q}{q_0}} 
			\left(\sum_{j=0}^{\infty} 
			2^{jq_0(t - s + \frac{d}{p} + \frac{\gg}{q_0})}
			\right)^{\frac{q}{q_0}} . 		 
		\end{split}
	\end{equation}
	Now let $t<s-\frac{d}{p}$ be fixed, and let $\gg\in(0,d]$.
	\rev{For every fixed $q\ge \max(2\gg (s-\frac{d}{p}-t)^{-1}, 1)$, we choose 
	$q_0:=q$ to obtain that} 
	\begin{equation}\label{eq:moments}
		\begin{split}
			\|b_T\|^q_{L^q(\gO; \cC^t)}
			&\le
			2^{d}\bE\left(|X_{1,1}|^{q}\right) 
			\left(\sum_{j=0}^{\infty} 
			2^{jq_0(t - s + \frac{d}{p})/2}
			\right)^{\frac{q}{q_0}} 		 \\
			&\le 
			2^{d}\bE\left(|X_{1,1}|^{q}\right)
			\left(\sum_{j=0}^{\infty} 
			2^{-j\gg}
			\right)\\
			&\le 
			C\bE\left(|X_{1,1}|^{q}\right),
		\end{split}
	\end{equation}
	with a constant $C>0$ that is independent of $q$.
	We now define for given $\eps>0$, finite $n\in\bN$ and $p \in [1,\infty)$
        the random variable 
	\begin{equation*}
		E_n(\go):=\sum_{k=0}^n \frac{(\eps \|b_T(\go)\|^p_{\cC^t})^k}{k!}, \quad \go\in\gO.
	\end{equation*}
	Clearly, $E_n(\go)\to \exp(\eps \|b_T(\go)\|^p_{\cC^t})$ holds 
        $P$-.a.s as $n\to \infty$ and Inequality~\eqref{eq:moments} yields, 
        for any $n\in\bN$ and $n_\gg:=\rev{\frac{1}{p}\ceil{2\gg (s-\frac{d}{p}-t)^{-1}}}$,
        that
	\begin{equation*}
		\bE(E_n)
		=
		\sum_{k=0}^n \frac{\eps^k}{k!}
		\bE(\|b_T\|_{\cC^t}^{pk}) 
		\le
		\widetilde C
		+
		\sum_{k=n_\gg}^n \frac{\eps^k}{k!}
		C^{pk}\bE(|X_{1,1}|^{pk}) 
		=
		\widetilde C+
		\bE\left(
		\sum_{k=n_\gg}^n
		\frac{(\eps C^p|X_{1,1}|^p)^k}{k!}
		\right),
	\end{equation*}
	where $\widetilde C = \widetilde C(\gg, s,d,p,t)>0$.
	The monotone convergence theorem then shows that for sufficiently small $\eps>0$ and $t<s-\frac{d}{p}$, it holds
	\begin{equation*}
		\bE\left( \exp(\eps \|b_T\|^p_{\cC^t}) \right)
		\le
		\widetilde C
		+
		\lim_{n\to\infty}
		\bE\left(
		\sum_{k=n_\gg}^n
		\frac{(\eps C^p|X_{1,1}|^p)^k}{k!}
		\right)
		\le 
		\widetilde C +
		\bE\left( \exp(\eps C^p |X_{1,1}|^p) \right) < \infty.
	\end{equation*}
	
	3.) \rev{The claim for $\gg\in(0,d]$ follows by the previous part.}
	For $\gg\in(-\infty,0]$, $q\ge 1$ and $t\in(0,s-\frac{d}{p}-\frac{\gg}{q})$, 
	we finally use $q_0 = q$ and $t_0=t+\frac{d}{q}$ in~\eqref{eq:q-moments} 
        to obtain that 
	\begin{align*}
		\|b_T\|_{L^q(\gO; \cC^t)}
		&\le 
		C^q \bE\left(|X_{1,1}|^q\right) 
		\left(\sum_{j=0}^{\infty} 
		2^{jq(t - s + \frac{d}{p} + \frac{\gg}{q})}
		\right) < \infty.
	\end{align*}
\end{proof}

\begin{rem}\label{rem:regularity}~
	\cite[Theorems 4 and 5]{KLSS21} state that $b_T\in B_p^t$ holds $P$-a.s. for all $t\in(0, s-\gg/p)$, and that $b_T\notin B_p^{s-\gg/p}$ occurs with probability $1-p_\gb>0$ for $\gg\in(0, d]$, where $p_\gb$ is the solution to the equation $p_\gb=((1-\gb)+\gb p_\gb)^{2^d}$ (cf. Lemma~\ref{lem:extinction} in the Appendix.)
	We emphasize that Theorem~\ref{thm:randomtreereg} significantly extends these previous results, 
	as we quantify precisely the regularity of $b_T$ in terms of Besov and Hölder-Zygmund norms.
	
	Recall that we may replace the Hölder-Zygmund spaces $\cC^t$ in \rev{the second part of} Theorem~\ref{thm:randomtreereg} 
        by the "usual" Hölder spaces $\rC^t$ if $t\notin\bN$ (which is \rev{not true} for integer $t$).
	Theorem~\ref{thm:randomtreereg} shows that a wavelet density $\gb=2^{\gg-d}<1$
        improves smoothness in $B_q^t$, as the upper bound $t < s+ \frac{d-\gg}{q} - \frac{d}{p}$ is decreasing in $\gg\in(-\infty,d]$. 
	However, given that $\gg>0$ we may not expect to gain any (pathwise) Hölder regularity beyond $t<s-\frac{d}{p}$. 
This is not surprising with regard to Remark~\ref{rem:Besov-random-tree}: $b_T$ 
admits an infinite series expansion on random fractals in $\cD$ for $\gb>2^{-d}$ with positive probability. 
Hence, the local Hölder-regularity of $b_T$ on such fractals corresponds to a $B^s_p$-random variable $b$ 
as in Definition~\ref{def:besovprior} (with full wavelet density $\gb=1$).
In case that $\gg\le 0$, the series expansion of $b_T$ terminates almost surely after a finite number of terms. \rev{As $\mathbf\Psi\subset \rC^{\ga}$, this shows that $b_T\in\rC^\ga$ almost surely if $\gg\le 0$. 
Furthermore, we may increase the smoothness exponent $t$ for $b_T\in L^q(\gO; \cC^t)$ 
to the admissible range $t<s-\frac{d}{p}-\frac{\min(\gg,0)}{q}$.}
For large $q$, we see that essentially the restriction $t<s-\frac{d}{p}$ applies as for $\gg>0$. 
This in turn indicates that the bound
\begin{equation*}
	\bE\left(\exp\left(\eps \|b\|_{\cC^t}^p\right)\right)<\infty,\quad \eps\in(0,\eps_p),
\end{equation*}
from part 2.) of Theorem~\ref{thm:randomtreereg} can not be improved to Hölder indices $t\ge s-\frac{d}{p}$, 
even if $\gg\le0$.
\end{rem}

\section{Linear Elliptic PDEs with Besov Random Coefficients}
\label{sec:randompdes}
In this section, we first recall well-posedness and regularity results for 
linear, second order elliptic diffusion problems with random coefficient. 
Thereafter, we transfer the results to a setting with Besov tree random diffusion 
coefficient by exploiting the results from Section~\ref{sec:besov-rv}.
\subsection{Well-posedness and regularity}
\label{subsec:well-posedness}

Let $\cD\subset\bR^d$, $d\in\{1,2,3\}$, be a convex polygonal domain \rev{for $d=2,3$, 
and a finite interval for $d=1$.}
with the boundary $\partial \cD$ consisting of a finite number of line or plane segments.
We consider the random elliptic problem to find $u(\go):\cD\to \bR$ for given $\go\in\gO$ such that 
\begin{equation}\label{eq:ellipticpde}
	\begin{alignedat}{2}
		-\nabla\cdot(a(\go)\nabla u(\go)) &= f\quad &&\text{in $\cD$}, \quad 
		u(\go) = 0 \quad &&\text{on $\partial\cD$}.
	\end{alignedat}
\end{equation}
The diffusion coefficient $a$ in Problem~\eqref{eq:ellipticpde} admits positive paths on $\cD$, i.e., $a(\go):\cD\to \bR_{>0}$.
Moreover, $a$ is a random variable $a:\gO\to\cX$, taking values in a suitable Banach space $\cX\subset L^\infty(\cD)$.
The source term $f:\cD\to\bR$ is assumed to be a deterministic function for the sake of simplicity,
but may as well be modeled by a random function $f:\cD\times\gO\to\bR$. 
For the variational formulation of Problem~\eqref{eq:ellipticpde} 
we define $H:=L^2(\cD)$, $V:=H_0^1(\cD)$ and recall that 
$\left\|\cdot\right\|_V:V\to\bR_{\ge 0},\: v\mapsto \|\nabla v \|_H$ defines a norm on $V$ by Poincare's inequality.
The weak formulation of Problem~\eqref{eq:ellipticpde} for fixed $\go\in\gO$ is to find $u(\go)\in V$ such that for any $v\in V$ it holds 
\begin{equation}\label{eq:ellipticpdeweak}
	\int_\cD a(\go)\nabla u(\go)\cdot\nabla v dx = \dualpair{V'}{V}{f}{v}.
\end{equation}
  
\begin{defi}
	The map $\omega\mapsto u(\go) \in V$ with $u(\go)$ the solution of~\eqref{eq:ellipticpdeweak} 
        is the \textit{pathwise weak solution}.
\end{defi}

Existence and uniqueness of pathwise weak solutions are ensured by the following theorem.

\begin{thm}\label{thm:well-posed}
	Let $a:\gO\to L^\infty(\cD)$ be strongly $\cA/\cB(L^\infty(\cD))$-measurable such that  
	\begin{equation}\label{eq:coervity}
		a_-(\go):=\essinf_{x\in\cD} \: a(x,\go)>0,\quad \text{$P$-a.s.,}
	\end{equation}
	and let $f\in V'$. 
        Then, there exists \emph{for all $\go\in\gO$} 
        a unique weak solution $u(\go)\in V$ to Problem~\eqref{eq:ellipticpde}.
        The map $u:\gO\to V$ is strongly $\cA/\cB(V)$-measurable.  
\end{thm}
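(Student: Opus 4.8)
The plan is to prove pathwise existence and uniqueness first, and then handle strong measurability of $\go\mapsto u(\go)$ separately, since these are logically distinct. For the pathwise part, fix $\go\in\gO$. By the coercivity hypothesis~\eqref{eq:coervity}, there is a $P$-nullset $N\in\cA$ such that $a_-(\go)>0$ for all $\go\notin N$; moreover $a(\go)\in L^\infty(\cD)$ gives an upper bound $a_+(\go):=\|a(\go)\|_{L^\infty(\cD)}<\infty$. For such $\go$ the bilinear form $B_\go(u,v):=\int_\cD a(\go)\nabla u\cdot\nabla v\,dx$ on $V\times V$ is bounded, with $|B_\go(u,v)|\le a_+(\go)\|u\|_V\|v\|_V$, and coercive, with $B_\go(v,v)\ge a_-(\go)\|v\|_V^2$, using that $\|\cdot\|_V=\|\nabla\cdot\|_H$ is a norm on $V=H_0^1(\cD)$ by Poincar\'e's inequality. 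Since $f\in V'$, the Lax--Milgram lemma yields a unique $u(\go)\in V$ solving~\eqref{eq:ellipticpdeweak}, with the a priori bound $\|u(\go)\|_V\le a_-(\go)^{-1}\|f\|_{V'}$. For $\go\in N$ one simply sets $u(\go):=0$ (the statement claims existence for \emph{all} $\go$, and modifying on a nullset is harmless for the measurability conclusion); alternatively, if $a_-(\go)>0$ happens to hold on all of $\gO$ one gets genuine pathwise solvability everywhere.

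For strong measurability, the approach is to exhibit $u$ as a pointwise (in $\go$) limit of measurable maps and invoke the Pettis measurability theorem, using that $V=H_0^1(\cD)$ is separable. I would use a Galerkin approximation: let $(V_n,n\in\bN)$ be an increasing sequence of finite-dimensional subspaces of $V$ with $\overline{\bigcup_n V_n}=V$, spanned by a fixed basis $(\vp_i,i\in\bN)$. For each $n$ and each $\go\notin N$, let $u_n(\go)\in V_n$ be the Galerkin solution, i.e.\ $B_\go(u_n(\go),v)=\dualpair{V'}{V}{f}{v}$ for all $v\in V_n$. In coordinates $u_n(\go)=\sum_{i=1}^n c_i(\go)\vp_i$, the coefficient vector $c(\go)=(c_i(\go))$ solves the linear system $A_n(\go)c(\go)=F_n$, where $(A_n(\go))_{ij}=B_\go(\vp_j,\vp_i)=\int_\cD a(\go)\nabla\vp_j\cdot\nabla\vp_i\,dx$ and $(F_n)_i=\dualpair{V'}{V}{f}{\vp_i}$. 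Each entry of $A_n(\go)$ is $\cA$-measurable in $\go$: since $a$ is strongly $\cA/\cB(L^\infty(\cD))$-measurable, the map $\go\mapsto\int_\cD a(\go)g\,dx$ is measurable for every fixed $g\in L^1(\cD)$ (it is continuous linear in $a\in L^\infty$ composed with a measurable map, and strong measurability is preserved), and $\nabla\vp_j\cdot\nabla\vp_i\in L^1(\cD)$. Coercivity makes $A_n(\go)$ symmetric positive definite, hence invertible, and $c(\go)=A_n(\go)^{-1}F_n$ depends measurably on $\go$ because matrix inversion is continuous on the (open) set of invertible matrices; therefore $\go\mapsto u_n(\go)\in V_n\subset V$ is strongly measurable (it is $\cA$-measurable into a finite-dimensional, hence separable, space).

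It remains to pass to the limit. By C\'ea's lemma, for $\go\notin N$ one has $\|u(\go)-u_n(\go)\|_V\le \frac{a_+(\go)}{a_-(\go)}\inf_{v\in V_n}\|u(\go)-v\|_V\to 0$ as $n\to\infty$, since $\bigcup_n V_n$ is dense in $V$. Thus $u_n(\go)\to u(\go)$ in $V$ for every $\go\notin N$, and $u_n\to u$ pointwise on $\gO$ after defining all maps to be $0$ on $N$. A pointwise limit of strongly measurable maps into a metric space is strongly measurable, so $u:\gO\to V$ is strongly $\cA/\cB(V)$-measurable. I expect the main obstacle to be the measurability bookkeeping in the second and third paragraphs — specifically, justifying cleanly that $\go\mapsto\int_\cD a(\go)g\,dx$ is measurable for $g\in L^1(\cD)$ given only \emph{strong} $L^\infty(\cD)$-measurability of $a$ (this follows since strongly measurable maps are $P$-a.s.\ limits of simple functions, for which the claim is obvious, and the duality pairing $L^\infty\times L^1\to\bR$ is jointly continuous in the first argument), and handling the nullset $N$ consistently so that the "for all $\go\in\gO$" existence claim and the measurability claim are both literally satisfied. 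The C\'ea estimate and Lax--Milgram steps are entirely routine.
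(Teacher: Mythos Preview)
Your proof is correct. The pathwise Lax--Milgram step and the handling of the exceptional nullset match the paper exactly (the paper also modifies $a$ on a nullset, invoking completeness of $(\gO,\cA,P)$, so that $a_-(\go)>0$ and $a(\go)\in L^\infty(\cD)$ hold for all $\go$).

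The measurability argument, however, proceeds differently. The paper does not use Galerkin approximations at all; instead it proves directly that the deterministic data-to-solution map $U:S\to V$, $a\mapsto u$, is Lipschitz continuous on $S=\{a\in L^\infty(\cD):\essinf a>0\}$, via the perturbation bound
\[
\|u_1-u_2\|_V\le\frac{\|f\|_{V'}}{a_{1,-}\,a_{2,-}}\,\|a_1-a_2\|_{L^\infty(\cD)},
\]
and then concludes that $u=U\circ a$ is strongly measurable as the composition of a continuous map with a strongly measurable one. This is shorter and has the advantage that the perturbation estimate itself is reused later in the paper (Proposition~\ref{prop:perturbation}, and in the truncation and quadrature error analyses). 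Your Galerkin route is more hands-on and avoids the perturbation estimate entirely, at the cost of some bookkeeping; it would be the natural choice if one did not already need the Lipschitz dependence elsewhere.
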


\begin{proof}
	By the completeness of $(\gO,\cA,P)$,
	we may assume without loss of generality that $a_-(\go)>0$ and 
	$a(\go)\in L^\infty(\cD)$ holds for \textit{all} $\go\in\gO$\footnote{If this holds only $P$-.a.s., we may modify 
	$a:\gO\to L^\infty(\cD)$ on a $P$-nullset to obtain a strongly measurable modification 
	$\widetilde a:\gO\to L^\infty(\cD)$ of $a$, 
	so that  $\essinf_{x\in\cD} \widetilde a(x, \go)>0$ and $\widetilde a\in L^\infty(\cD)$ holds for \textit{all} $\go\in\gO$.
	In fact, let 
	\begin{equation*}
		A_0:=\{\go\in\gO|\;a_-(\go)\le 0 \text{ or } a(\go)\notin L^\infty(\cD)\}.
	\end{equation*}
	Then $P(A_0)=0$ by assumption, and hence $A_0\in\cA$ by completeness of $(\gO,\cA,P)$.
	Thus, we may consider, for instance, the modification  
	$\widetilde a(\go):=a(\go)\indi_{\{\go\notin A_0\}} + \indi_{\{\go\in A_0\}}$.
	It is readily verified that $\widetilde a$ is strongly $\cA/\cB(L^\infty(\cD))$-measurable, 
        and for all $\go\in\gO$ it holds $\essinf_{x\in\cD} \widetilde a(x, \go)>0$ and $\widetilde a\in L^\infty(\cD)$.}.
		
	Existence and uniqueness of a pathwise solution $u(\go)$ now follows for all $\go\in\gO$ by the Lax-Milgram Lemma. 
        To \rev{show strong} measurability of $u$, \rev{we use Lipschitz dependence of the coefficient-to-solution map}:
        consider two diffusion coefficients $a_1,a_2:\gO\to L^\infty(\cD)$
        that satisfy the assumption of the theorem with lower bounds $a_{1,-}, a_{2,-}>0$ as in~\eqref{eq:coervity} 
        and denote by $u_1,u_2:\gO\to V$ the associated unique weak solutions. 
	Equation~\eqref{eq:ellipticpde} together with 
	$\|v\|_V^2 = \|\nabla v\|_H^2$
	and Hölder's inequality yields for any fixed coefficients $a_1,a_2 \in L^\infty(\cD)$ such that 
	$a_{i,-} := \essinf_{x\in \cD} a_{i}(x) > 0$ that 
	\begin{equation}\label{eq:perturbationest}
			\begin{split}
				\|u_1-u_2\|_V 
				&\le \frac{\|u_2\|_{V}}{a_{1,-}}\|a_1-a_2\|_{L^\infty(\cD)} 
				 \le 
				\frac{\|f\|_{V'}}{a_{1,-} a_{2,-}}\|a_1-a_2\|_{L^\infty(\cD)}.
			\end{split}
	\end{equation}
	Therefore, the data-to-solution map 
	$U: S \to V,\; a \mapsto u$ is (Lipschitz) continuous on the set
	$S:=\{ a\in L^\infty(\cD)|\, \essinf_{x\in \cD} a(x) > 0 \}$.
	Since the pathwise weak solution $u:\gO\to V$ of 
	\eqref{eq:ellipticpde} may be written as $u=U\circ a$, the claim follows with the strong $\cA/\cB(L^\infty(\cD))$-measurability of $a:\gO\to L^\infty(\cD)$. 
\end{proof}
Lipschitz continuity \eqref{eq:perturbationest} of the data-to-solution map will be essential in deriving
error estimates in Section~\ref{sec:approximation} ahead, and also implies 
strong measurability of random solutions.
\begin{prop}\label{prop:perturbation}
Let $a_1,a_2:\gO\to L^\infty(\cD)$ be strongly $\cA/\cB(L^\infty(\cD))$-measurable 
such that  
	\begin{equation}\label{eq:coercivity}
		a_{i,-}(\go):=\essinf_{x\in\cD} \: a_i(x,\go)>0,\quad \text{$P$-a.s. for $i\in\{1,2\}$.}
	\end{equation}
Then, for every $f\in V'$ exists for $i\in\{1,2\}$ 
and for all $\go\in\gO$ a unique weak solution $u_i(\go)\in V$ 
to Problem~\eqref{eq:ellipticpde} (with $a$ in place of $a_i$). 
There holds the continuous-dependence estimate 
	\begin{equation*}
		\begin{split}
			\|u_1-u_2\|_V 
			\le 
			\frac{\|f\|_{V'}}{a_{1,-} a_{2,-}}\|a_1-a_2\|_{L^\infty(\cD)}.
		\end{split}
	\end{equation*}
\end{prop}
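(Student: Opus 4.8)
The plan is to reduce the statement almost entirely to material already in place. For existence and uniqueness, I would simply invoke Theorem~\ref{thm:well-posed}: the hypotheses imposed on $a_i$ — strong $\cA/\cB(L^\infty(\cD))$-measurability together with the pathwise coercivity~\eqref{eq:coercivity} — are exactly those of that theorem, applied separately for $i=1$ and $i=2$. Hence, for each $f\in V'$ there is for every $\go\in\gO$ a unique $u_i(\go)\in V$ solving~\eqref{eq:ellipticpdeweak} with $a_i(\go)$ in place of $a(\go)$, after (if necessary) modifying $a_i$ on the $P$-nullset where coercivity or $L^\infty$-boundedness fails, precisely as in the proof of Theorem~\ref{thm:well-posed}; the resulting maps $u_i:\gO\to V$ are strongly measurable.

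For the continuous-dependence bound I would argue pathwise. Fix $\go\in\gO$ and abbreviate $a_i=a_i(\go)$, $u_i=u_i(\go)$, $a_{i,-}=a_{i,-}(\go)>0$. Subtracting the two weak formulations and using the algebraic identity $a_1\nabla u_1-a_2\nabla u_2=a_1\nabla(u_1-u_2)+(a_1-a_2)\nabla u_2$ gives
\begin{equation*}
	\int_\cD a_1\,\nabla(u_1-u_2)\cdot\nabla v\,dx = -\int_\cD (a_1-a_2)\,\nabla u_2\cdot\nabla v\,dx,\qquad v\in V.
\end{equation*}
Testing with $v=u_1-u_2$, bounding the left-hand side below by $a_{1,-}\|u_1-u_2\|_V^2$ via~\eqref{eq:coercivity} and the right-hand side above by $\|a_1-a_2\|_{L^\infty(\cD)}\|u_2\|_V\|u_1-u_2\|_V$ by Hölder's inequality, and dividing by $\|u_1-u_2\|_V$ (the claim being trivial when this vanishes), one obtains $\|u_1-u_2\|_V\le a_{1,-}^{-1}\|a_1-a_2\|_{L^\infty(\cD)}\|u_2\|_V$. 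Combining this with the a priori bound $\|u_2\|_V\le a_{2,-}^{-1}\|f\|_{V'}$ — obtained by testing the weak formulation for $u_2$ with $u_2$ itself and invoking coercivity — yields the asserted inequality, and since $\go$ was arbitrary it holds for all $\go\in\gO$.

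There is essentially no obstacle: the computation is the one already recorded in~\eqref{eq:perturbationest}, and the only point needing minor care is the passage from the \emph{$P$-a.s.} coercivity~\eqref{eq:coercivity} to a statement valid for every $\go$, handled as in Theorem~\ref{thm:well-posed} by modifying the coefficients on a $P$-nullset (legitimate by completeness of $(\gO,\cA,P)$). I would also emphasize that $a_{1,-}(\go)$ and $a_{2,-}(\go)$ are genuine $\go$-dependent quantities, so the displayed bound is a pathwise inequality between $\go$-dependent norms; this pathwise Lipschitz dependence is precisely what will be used in the finite element error analysis of Section~\ref{sec:approximation}.
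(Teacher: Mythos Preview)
Your proposal is correct and follows exactly the paper's approach: existence/uniqueness via Theorem~\ref{thm:well-posed} and the pathwise Lipschitz estimate~\eqref{eq:perturbationest}. The paper's proof is a one-line reference to these two ingredients, while you spell out the standard derivation of~\eqref{eq:perturbationest} (subtract, test with the difference, use coercivity and the a priori bound), which is precisely the computation already carried out in the proof of Theorem~\ref{thm:well-posed}.
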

\begin{proof}
This follows immediately with Theorem~\ref{thm:well-posed} and~\eqref{eq:perturbationest}.
\end{proof}

From the regularity analysis of deterministic linear elliptic problems 
it is well known that $H^s(\cD)$-regularity of $u$ may be derived for certain $s>1$, 
provided that $a$ is Hölder continuous. 
The corresponding estimates usually do not reveal the explicit dependence of constants on $a(\go)$ 
or bounds on the Hölder norm $\|a(\go)\|_{\rC^t}$.  
For the stochastic problem and the ensuing numerical analysis in 
Sections~\ref{sec:approximation} and \ref{sec:mlmc}, however, 
we need the explicit dependence for given $\go$ to ensure that all pathwise estimates 
also hold in in $L^q(\gO; H^s(\cD))$ for suitable $q\ge1$.
To obtain explicit estimates, we follow the approach from~\cite[Chapter 3.3]{DNSZ22} 
for parametric elliptic PDEs, where regularity estimates 
are derived via the K-method of function space interpolation.
\footnote{Recall the $K$-method of interpolation of two Banach spaces
          $(A_0, \left\|\cdot\right\|_{A_0})$ and $(A_1, \left\|\cdot\right\|_{A_1})$ 
          with continuous embedding $A_1\hookrightarrow A_0$:
          their \emph{K-functional} is defined by
\begin{equation*}
	K(a, z; A_0, A_1):=\inf_{a_1\in A_1} 
	\{\|a-a_1\|_{A_0} + z\|a_1\|_{A_1}\}, \quad a\in A_0,\; z>0. 
\end{equation*}
For any $r\in(0,1)$ and $q\in[1,\infty]$ the \emph{interpolation space of order $r$ with fine index $q$} 
is 
\begin{equation*}
	[A_0, A_1]_{r,q} = \left\{a\in A_0|\, \|a\|_{[A_0, A_1]_{r,q}}<\infty \right\},
\end{equation*}
where 
\begin{equation*}
	\|a\|_{[A_0, A_1]_{r,q}}
	:=
	\begin{cases}
		\left(\int_0^\infty z^{-rq} K(a,z;A_0, A_1)^q\frac{1}{z}dz\right)^{\frac{1}{q}},
		&\quad q\in[1,\infty), \\
		\sup_{z>0} z^{-r}K(a,z;A_0, A_1),&\quad q=\infty.
	\end{cases}
\end{equation*}
The set $[A_0, A_1]_{r,q}$ 
is a (generally non-separable) Banach space. 
}
One obtains in particular
Hölder spaces $\rC^r(\ol\cD)$ by interpolation (\cite[Lemma 7.36]{adams2003sobolev}):
\begin{equation*}
	\rC^r(\ol\cD)=[L^\infty(\cD), W^{1,\infty}(\cD)]_{r,\infty}, \quad r\in(0,1).
\end{equation*}
To investigate spatial regularity of solutions to~\eqref{eq:ellipticpde}, we introduce the normed space 
\begin{equation}
	W:=\{v\in V|\; \gD v \in H\}, \quad \|v\|_W:=\|\gD v\|_H.
\end{equation}
Note that $v=0\Leftrightarrow \|v\|_W=0$ follows by the maximum principle, since $v\in V=H^1_0(\cD)$ has vanishing trace. 
We formulate regularity results in terms of the interpolation space 
\begin{equation}\label{eq:interp-space}
	W^r:=[V, W]_{r,\infty}, \quad r\in(0,1).
\end{equation}
For a concise notation, we further set $W^1:=W$ in the following.
\begin{lem}\cite[Propositions 3.2 and 3.5]{DNSZ22}
	\label{lem:ellipticreg}
	Let $a:\gO\to \rC^r(\ol\cD)\subset L^\infty(\cD)$ be strongly measurable for some $r\in(0,1]$ 
        such that $a_-(\go)>0$ holds $P$-a.s. and let $f\in H$.
	Then, there is a constant $C=C(r,\cD)$, such that it holds 
	\begin{equation}\label{eq:ellipticregularity}
		\|u(\go)\|_{W^r}
		\le 
		\frac{C}{a_-(\go)}
		\left(1+\left(\frac{\|a(\go)\|_{\rC^r(\ol\cD)}}{a_-(\go)}\right)^{1/r}\right)\|f\|_H.
	\end{equation}
\end{lem}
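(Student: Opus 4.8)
\emph{Overall strategy.} The statement is \cite[Propositions~3.2 and~3.5]{DNSZ22}; the plan is to run the $K$-method of interpolation directly on the pathwise solution $u$ (the coefficient-to-solution map being nonlinear, one cannot simply interpolate the map), fed by the two endpoint estimates $r=0$ and $r=1$, a dyadic-scale regularisation of the coefficient, and the Lipschitz perturbation bound of Proposition~\ref{prop:perturbation}. Throughout, fix $\go\in\gO$ and abbreviate $a:=a(\go)$, $a_-:=a_-(\go)$, $u:=u(\go)$ and $A:=\|a\|_{\rC^r(\ol\cD)}$, so that $A\ge\|a\|_{L^\infty(\cD)}\ge a_->0$. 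It then suffices to bound $z^{-r}K(u,z;V,W)$ uniformly in $z>0$, since $\|u\|_{W^r}=\sup_{z>0}z^{-r}K(u,z;V,W)$.

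\emph{Endpoints.} For ``$r=0$'' the Lax--Milgram lemma and Poincaré's inequality give $\|u\|_V\le\|f\|_{V'}/a_-\le C(\cD)\|f\|_H/a_-$. For $r=1$, i.e.\ $a\in W^{1,\infty}(\cD)$, the weak equation \eqref{eq:ellipticpdeweak} reads $-\nabla\cdot(a\nabla u)=f$ in $V'$, hence $a\,\gD u=-f-\nabla a\cdot\nabla u=:g$ with $g\in H$ (as $\nabla a\in L^\infty(\cD)$ and $\nabla u\in H$); dividing by $a$ (using that $1/a\in W^{1,\infty}(\cD)$ is a multiplier on $V'$, so that $\gD u=g/a$ coincides with the pointwise quotient) gives $\gD u\in H$, i.e.\ $u\in W$ with
\begin{equation*}
	\|u\|_W=\|\gD u\|_H\le\frac{1}{a_-}\bigl(\|f\|_H+\|\nabla a\|_{L^\infty(\cD)}\|\nabla u\|_H\bigr)\le\frac{C\|f\|_H}{a_-}\Bigl(1+\frac{\|a\|_{W^{1,\infty}(\cD)}}{a_-}\Bigr),
\end{equation*}
using $\|\nabla u\|_H=\|u\|_V\le C\|f\|_H/a_-$ once more. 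This proves the lemma for $r=1$; from now on $r\in(0,1)$.

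\emph{Regularisation and the $K$-functional.} Extend $a$ to a $\rC^r$-function on $\bR^d$ with $\rC^r$-norm $\le CA$ (possible since $\cD$ is Lipschitz), let $a^t:=a*\rho_t$ be its mollification at scale $t>0$, and set $e^t:=a-a^t$ on $\cD$. Standard mollifier estimates give $\|e^t\|_{L^\infty(\cD)}\le C_1A\,t^r$ and $\|a^t\|_{W^{1,\infty}(\cD)}\le C_1A\,(1+t^{r-1})$. Put $t_\star:=(a_-/(2C_1A))^{1/r}\le1$; for $0<t\le t_\star$ one has $\essinf_{\cD}a^t\ge a_--\|e^t\|_{L^\infty(\cD)}\ge a_-/2$, so $a^t$ is admissible, and if $u^t\in V$ is the corresponding weak solution then the $r=1$ estimate and Proposition~\ref{prop:perturbation} (with lower bounds $a_-$ and $a_-/2$) yield $\|u^t\|_W\le\tfrac{C\|f\|_H}{a_-}(1+\tfrac{A(1+t^{r-1})}{a_-})$ and $\|u-u^t\|_V\le\tfrac{2\|f\|_{V'}}{a_-^2}\|e^t\|_{L^\infty(\cD)}\le\tfrac{C\|f\|_HA}{a_-^2}t^r$. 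Now bound $z^{-r}K(u,z;V,W)$ in three regimes: for $z>1$, the trivial splitting $u=u+0$ gives $z^{-r}K(u,z;V,W)\le z^{-r}\|u\|_V\le C\|f\|_H/a_-$; for $0<z\le t_\star$, take $t=z$ and the splitting $u=(u-u^z)+u^z$, so that (using $z^{1-r}\le1$) the two bounds combine to $z^{-r}K(u,z;V,W)\le C\|f\|_HA/a_-^2\le\tfrac{C\|f\|_H}{a_-}(\tfrac{A}{a_-})^{1/r}$; for $t_\star<z\le1$, take $t=t_\star$, noting $t_\star^{r-1}\le C(A/a_-)^{(1-r)/r}$ so $\|u^{t_\star}\|_W\le\tfrac{C\|f\|_H}{a_-}(1+(A/a_-)^{1/r})$ and $\|u-u^{t_\star}\|_V\le\tfrac{C\|f\|_H}{a_-}$, and use $z^{-r}\le t_\star^{-r}=2C_1A/a_-$ together with $z^{1-r}\le1$ to obtain again $z^{-r}K(u,z;V,W)\le\tfrac{C\|f\|_H}{a_-}(1+(A/a_-)^{1/r})$. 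Taking the supremum over $z>0$ gives the claim.

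\emph{Main obstacle.} The routine parts are the endpoints and the mollifier estimates; the crux is the last step, namely choosing the regularisation scale $t$ as a function of the interpolation variable $z$ subject to the coercivity constraint $t\le t_\star$. It is precisely the borderline scale $t_\star\sim(a_-/\|a\|_{\rC^r(\ol\cD)})^{1/r}$ — forced because $\|a-a^t\|_{L^\infty(\cD)}$ must stay below $a_-/2$ so that the regularised problem remains well posed — that generates the power $1/r$ on $\|a\|_{\rC^r(\ol\cD)}/a_-$ in the final bound; one should also take mild care in the $r=1$ step to obtain $u\in W$ (i.e.\ $\gD u\in H$) via the multiplier property of $W^{1,\infty}(\cD)$ on $V'$ rather than via corner regularity.
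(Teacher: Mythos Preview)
Your argument is correct and is precisely the $K$-method interpolation proof behind \cite[Propositions~3.2 and~3.5]{DNSZ22}: endpoint bounds at $r=0$ (Lax--Milgram) and $r=1$ (the $H(\mathrm{div})$ identity $\gD u=-f/a-(\nabla a/a)\cdot\nabla u$), mollification of $a$ at scale $t$, the perturbation estimate of Proposition~\ref{prop:perturbation}, and the three-regime splitting in $z$ with the critical scale $t_\star\sim(a_-/A)^{1/r}$ producing the exponent $1/r$. Note that the present paper does not give its own proof of this lemma --- it is quoted verbatim from \cite{DNSZ22} --- so there is nothing further to compare; your write-up is a faithful and complete reconstruction of that argument. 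One cosmetic remark: in the $r=1$ step, the justification that $\gD u\in H$ is cleanest phrased as ``$W^{1,\infty}(\cD)$ is a multiplier on $H(\mathrm{div};\cD)$'' (apply it with $w=1/a$ and $\sigma=a\nabla u$) rather than on $V'$, but the conclusion and the bound are the same.
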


All results from this subsection so far hold under the considerably weaker assumption that $\cD\subset\bR^d$ is a bounded Lipschitz domain. However, since $\cD$ is assumed convex, we are able to embed $W^r$ in (fractional) Sobolev spaces.
This is made precise in the following Lemma, which is in required for the finite element error analysis in Section~\ref{subsec:fem}.  

\begin{lem}\label{lem:interp-sobolev}
	Let $\cD$ be convex, $W^r:=[V, W]_{r,\infty}$ for $r\in(0,1)$ and let $W^1:=W$.
	Then, it holds that $W=W^1\hookrightarrow H^2(\cD)$. Moreover, $W^r\hookrightarrow H^{1+r_0}(\cD)$ for any $r_0\in(0,r)$.
\end{lem}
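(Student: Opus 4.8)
The plan is to establish the endpoint embedding $W=W^1\hookrightarrow H^2(\cD)$ directly from elliptic regularity on convex domains, and then to obtain the fractional statement by real interpolation, using that $V\hookrightarrow H^1(\cD)$ serves as the companion endpoint.

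\textbf{The case $r=1$.} First I would take $v\in W$, so that $v\in V=H_0^1(\cD)$ and $f:=\gD v\in H=L^2(\cD)$, and observe that $v$ is then the (unique) weak solution of $-\gD v = -f$ with homogeneous Dirichlet data. Since $\cD$ is a bounded convex polytope (for $d=1$ a finite interval), hence in particular a convex Lipschitz domain, the classical $H^2$-regularity theory for the Dirichlet Laplacian on convex domains (Grisvard) gives $v\in H^2(\cD)$ with $\|D^2v\|_{L^2(\cD)}\le C(\cD)\,\|\gD v\|_{L^2(\cD)}$; combining this with Poincaré's inequality on $V$ (which bounds $\|v\|_{L^2(\cD)}$ and $\|\nabla v\|_{L^2(\cD)}$ by a multiple of $\|\gD v\|_{L^2(\cD)}$) yields $\|v\|_{H^2(\cD)}\le C(\cD)\,\|v\|_W$, i.e. $W\hookrightarrow H^2(\cD)$. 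This is the only place where convexity of $\cD$, rather than the weaker Lipschitz property used elsewhere in the section, is genuinely needed.

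\textbf{The fractional case.} Fix $r\in(0,1)$ and $r_0\in(0,r)$. From $V=H_0^1(\cD)\hookrightarrow H^1(\cD)$ (with embedding constant $1$) and $W\hookrightarrow H^2(\cD)$ just proved, monotonicity of the $K$-functional under continuous inclusions of the interpolation couple gives
\begin{equation*}
	W^r=[V,W]_{r,\infty}\hookrightarrow[H^1(\cD),H^2(\cD)]_{r,\infty}.
\end{equation*}
Next I would use the elementary inclusion between real interpolation spaces, $[A_0,A_1]_{\gt_1,q_1}\hookrightarrow[A_0,A_1]_{\gt_0,q_0}$ whenever $\gt_0<\gt_1$ for arbitrary fine indices $q_0,q_1\in[1,\infty]$, which with $r_0<r$ gives
\begin{equation*}
	[H^1(\cD),H^2(\cD)]_{r,\infty}\hookrightarrow[H^1(\cD),H^2(\cD)]_{r_0,2}.
\end{equation*}
Finally, since $\cD$ is a Lipschitz (hence Sobolev extension) domain, real interpolation of the Sobolev scale with fine index $2$ reproduces a Sobolev space, $[H^1(\cD),H^2(\cD)]_{r_0,2}=H^{1+r_0}(\cD)$; composing the three continuous embeddings yields $W^r\hookrightarrow H^{1+r_0}(\cD)$.

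The main point requiring care will be invoking the two interpolation identities on the bounded domain $\cD$ rather than on $\bR^d$: this is legitimate precisely because $\cD$ is Lipschitz, so a universal Sobolev extension operator exists and real interpolation commutes with extension/restriction. Everything else is either the classical convex-domain regularity estimate or a formal property of the $K$-functional, for which I would simply cite standard references. A minor but essential point is that $r_0<r$ must be strict: the second embedding above fails for $r_0=r$, since $[\,\cdot\,,\,\cdot\,]_{r,\infty}$ is strictly larger than $[\,\cdot\,,\,\cdot\,]_{r,2}$, which is exactly the form in which the statement is phrased.
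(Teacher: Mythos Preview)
Your proof is correct and follows essentially the same approach as the paper: the endpoint $W\hookrightarrow H^2(\cD)$ via convex-domain elliptic regularity, then $W^r\hookrightarrow[H^1(\cD),H^2(\cD)]_{r,\infty}$ by comparing $K$-functionals. The only minor difference is in the last step: the paper uses reiteration to write $[H^1,H^2]_{r,\infty}=[H^{1+r-\eps},H^{1+r+\eps}]_{1/2,\infty}$ and then embeds into the larger endpoint $H^{1+r-\eps}$, whereas you pass from $(r,\infty)$ to $(r_0,2)$ by monotonicity in the interpolation parameter and then identify $[H^1,H^2]_{r_0,2}=H^{1+r_0}$; both are standard and equally valid, and your route is arguably a touch more direct since it avoids the reiteration identity.
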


\begin{proof}
	By convexity of $\cD$, we have that $\|v\|_{H^2(\cD)}\le C_\cD \|v\|_W$ holds for all $v\in W$,  where $C_\cD$ only depends on the diameter of $\cD$, see, e.g., \cite[Theorem 3.2.1.2]{grisvard2011elliptic}. Thus, $W\hookrightarrow H^2(\cD)\cap V$ follows. 
	
	For the case $r\in(0,1)$, we recall that there is $C_V>0$, such that $\|v\|_{H^1(\cD)}\le C_{V}\|v\|_V$ holds for all $v\in V$ by Poincar\'e's inequality. Moreover, we have $\|w\|_{H^2(\cD)}\le C_{\cD}\|w\|_W$ for any $w\in W$, and hence $W\subset H^2\cap V$ from the first part. 
	For $v\in V\subset H^1$ this yields 
	\begin{align*}
		\|v\|_{[H^1(\cD),H^2(\cD)]_{r,\infty}}
		&= 
		\sup_{z>0} z^{-r} \inf_{w\in H^2}\{\|v-w\|_{H^1(\cD)} + \rev{z}\|w\|_{H^2(\cD)}\} \\
		&\le 
		\sup_{z>0} z^{-r} \inf_{w\in W}\{\|v-w\|_{H^1(\cD)} + \rev{z}\|w\|_{H^2(\cD)}\} \\
		&\le 
		\sup_{z>0} z^{-r} \inf_{w\in W}\{C_V\|v-w\|_{V} + C_\cD\rev{z}\|w\|_{W}\} \\
		&\le 
		\max(C_V, C_\cD) \|v\|_{W^r}.
	\end{align*}
	Hence, $W^r\hookrightarrow [H^1(\cD),H^2(\cD)]_{r,\infty}$.
	The claim now follows since for any $\eps\in(0,1+r)$ there holds
	\begin{equation*}
		[H^1(\cD),H^2(\cD)]_{r,\infty}
		=[H^{1+r-\eps}(\cD),H^{1+r+\eps}(\cD)]_{\frac{1}{2},\infty}\hookrightarrow H^{1+r-\eps}(\cD),
	\end{equation*} 
	see \cite[Section 7.32]{adams2003sobolev}.	
\end{proof}

\subsection{Besov random tree priors as log-diffusion coefficient}
\label{subsec:randomtree-diffusion}
To formulate Problem~\eqref{eq:ellipticpde} with a Besov random tree coefficient, we assume 
that $\cD\subseteq\bT^d$. We follow \cite[Section 2]{TriebelFctSpcDom} and define, 
for given $\go\in\gO$, the random element $b_T(\go):\cD\to\bR$ as the \emph{restriction} of a periodic function in $B_{p,p}^{s,per}$ to the domain $\cD$.
The restriction $\varphi|_{\cD}$ of any $\varphi\in\rS'(\bR^d)$ to $\cD$ is in turn given by the element $\varphi|_{\cD}\in \rD'(\cD)$ such that 
\begin{equation*}
	\dualpair{\rD'(\cD)}{\mathrm D(\cD)}{\varphi|_{\cD}}{v}
	=
	\dualpair{\rS'(\bR^d)}{\rS(\bR^d)}{\varphi}{v_0},
	\quad 
	v\in\mathrm D(\cD),
\end{equation*}
where $v_0\in\mathrm D(\bR^d)\subset \rS(\bR^d)$ denotes the zero-extension of any $v\in \mathrm D(\cD)$.

\begin{defi}\label{def:restrictedbesovprior}
	Let $\cD\subseteq\bT^d$ be a bounded, connected domain.
	Let $b_T$ be given in 
	Definition~\ref{def:randomtree-prior} for $p\in[1,\infty)$, $s>0$ and $\gb=2^{\gg-d}\in[0,1]$, 
	and let $\mathrm {prl}^{per}:B_{p,p}^s(\bT^d)\to B_{p,p}^{s,per}(\bR^d)$ 
        denote the isomorphic extension operator from~\eqref{eq:extension-map}.
	Then we define for any $\go\in\gO$ 
	\begin{equation*}
		b_{T,\cD}(\go)
		:=
		(\mathrm{prl}^{per}b_T(\go))|_{\cD},
	\end{equation*}
	and call $b_{T,\cD}$ a \textit{$B_p^s(\cD)$-valued random variable}.
\end{defi}

\begin{rem}
	In case that $\cD=\bT^d$, we may readily use the identification $b_{T,\cD}=b_T$. Note that $b_{T,\cD}$ is periodic in this case, in the sense that there exists an extension $\mathrm{prl}^{per}b_{T,\cD}\in B_{p,p}^{s,per}(\bR^d)$. 
	If $\cD\subsetneq\bT^d$, however, $b_{T,\cD}$ is not (necessarily) periodic, but \emph{merely the restriction of a periodic function} from the torus $\bT^d$. 
\end{rem}

\begin{rem}
\label{rmk:Domain}

The same procedure could be applied for general bounded domains $\cD\not\subset\bT^d$, 
by extending Definition~\eqref{def:randomtree-prior} from the torus $\bT^d$ 
to a sufficiently large (periodic) domain $[-L,L]^d$ for $L>1$. 
This would increase the index-set $K_j$ of wavelet coefficients by at most 
a constant factor on each dyadic scale $j$.
However, all regularity proofs from Section~\ref{sec:besov-rv} are carried out similar in this setting, 
with minor changes to absolute constants. For instance, the admissible range of $\eps$ in 
Proposition~\ref{prop:besovreg} may \rev{become} smaller if $L>1$, 
but the smoothness parameter $t\in(0,s-\frac{d}{p})$ is unaffected.
Therefore, assuming $\cD\subseteq\bT^d$ for the sake of brevity does not have any 
substantial impact on the following results.
\end{rem}
We consider Problem~\eqref{eq:ellipticpde}, 
resp. its weak formulation~\eqref{eq:ellipticpdeweak}, 
with $a(\go):=\exp\left(b_T(\go)\right)$, 
where $b_T$ is a $B_p^s$-random variable with wavelet density $\gb$.
That is, we model the log-diffusion by a Besov random tree prior to incorporate fractal structures. 
With this preparation, we are able to derive well-posedness and regularity 
of the corresponding pathwise weak solution. 
\begin{thm}\label{thm:solution-regularity}
	Let $a:=\exp\left(b_{T,\cD}\right)$ with $b_{T,\cD}$ given in 
        Definition~\ref{def:restrictedbesovprior} for $p\in[1,\infty)$, $s>0$ and $\gb=2^{\gg-d}\in[0,1]$, so that $sp>d$.
	Furthermore, let $f\in V'$.
	
	\begin{enumerate}[1.)]
		\item 	 Then, there exists almost surely a unique weak solution $u(\go)\in V$ to~\eqref{eq:ellipticpde} and $u:\gO\to V$ is strongly measurable.
		
		\item For sufficiently small $\gk>0$ in~\eqref{eq:p-exponential}, 
                there are constants $\ol q\in(1,\infty)$ and $C>0$ such that
		\begin{equation*}
			\|u\|_{L^q(\gO; V)}\le C \|f\|_{V'}  <\infty
			\quad
			\begin{cases}
				&\text{for $q\in[1,\ol q)$ if $p=1$, and} \\
				&\text{for any $q\in [1,\infty)$ if $p>1$}.
			\end{cases}
		\end{equation*}
	
		\item 
		Let $r\in (0,s-\frac{d}{p})\cap(0,1]$ , $f\in H$ and $W^r$ as in~\eqref{eq:interp-space}.	
		For sufficiently small $\gk>0$ in~\eqref{eq:p-exponential},  
                there are constants $\ol q\in(1,\infty)$ and $C>0$ such that	
		\begin{equation*}
			\|u\|_{L^q(\gO; W^r)} \le C \|f\|_{H} <\infty
			\quad
			\begin{cases}
				&\text{for $q\in[1,\ol q)$ if $p=1$ and} \\
				&\text{for any $q\in [1,\infty)$ if $p>1$}.
			\end{cases}
		\end{equation*}
	\end{enumerate}

\end{thm}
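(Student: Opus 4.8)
The plan is to combine the deterministic pathwise results from Theorem~\ref{thm:well-posed} and Lemma~\ref{lem:ellipticreg} with the stochastic integrability estimates for $b_{T,\cD}$ from Theorem~\ref{thm:randomtreereg}. First I would establish strong measurability: since $sp>d$, Proposition~\ref{prob:measurability} gives $b_{T,\cD}\in \rC(\bT^d)$ $P$-a.s.\ with strong $\cA/\cB(\rC(\bT^d))$-measurability, hence $a=\exp(b_{T,\cD}):\gO\to\rC(\ol\cD)\subset L^\infty(\cD)$ is strongly measurable by continuity of the exponential and of the restriction map; moreover $a_-(\go)=\exp(-\|b_{T,\cD}(\go)\|_{L^\infty(\cD)})>0$ $P$-a.s. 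Theorem~\ref{thm:well-posed} then yields part 1.) directly.

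For part 2.), starting from the Lax-Milgram bound $\|u(\go)\|_V\le \|f\|_{V'}/a_-(\go)$ and $a_-(\go)^{-1}=\exp(\|b_{T,\cD}(\go)\|_{L^\infty(\cD)})$, I would raise to the $q$-th power and take expectations, so the task reduces to bounding $\bE(\exp(q\|b_{T,\cD}\|_{L^\infty(\cD)}))$. Here I would use the embedding $\cC^t\hookrightarrow L^\infty(\cD)$ for some fixed $t\in(0,s-\frac{d}{p})$ (possible since $sp>d$; also using that the restriction to $\cD\subseteq\bT^d$ only decreases norms) to reduce this to $\bE(\exp(Cq\|b_{T}\|_{\cC^t}))$. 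When $p>1$, Young's inequality $Cq x\le \eps x^p + C'(\eps,q)$ lets me absorb the linear term into $\eps\|b_T\|_{\cC^t}^p$, and then part 2.) of Theorem~\ref{thm:randomtreereg} gives finiteness for every $q\in[1,\infty)$, provided $\gk$ is small enough that $\eps\in(0,\eps_p)$ is admissible. When $p=1$, no such absorption is available, so the exponential moment $\bE(\exp(\eps\|b_T\|_{\cC^t}))$ is finite only for $\eps<\eps_1$, which forces the restriction $q<\ol q:=\eps_1/C$; shrinking $\gk$ enlarges the admissible $\eps_1$ and hence $\ol q$, but $\ol q$ stays finite.

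For part 3.), I would invoke Lemma~\ref{lem:ellipticreg} with the chosen $r\in(0,s-\frac{d}{p})\cap(0,1]$ and $f\in H$, giving the pathwise bound
\begin{equation*}
	\|u(\go)\|_{W^r}\le \frac{C}{a_-(\go)}\left(1+\left(\frac{\|a(\go)\|_{\rC^r(\ol\cD)}}{a_-(\go)}\right)^{1/r}\right)\|f\|_H.
\end{equation*}
Since $a=\exp(b_{T,\cD})$, both $a_-(\go)^{-1}$ and $\|a(\go)\|_{\rC^r(\ol\cD)}$ are controlled by $\exp(C\|b_{T,\cD}\|_{\rC^r(\ol\cD)})$: indeed $\|\exp(b)\|_{\rC^r}\le C\exp(C\|b\|_{\rC^r})$ by the chain/product rule for Hölder norms, and $\|b_{T,\cD}\|_{\rC^r(\ol\cD)}\le C\|b_T\|_{\cC^{t}}$ for $t\in(r,s-\frac{d}{p})$ using $\rC^r=\cC^r$ for non-integer $r$ (and the embedding $\cC^t\hookrightarrow\cC^r$), together with restriction from $\bT^d$. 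Thus the right-hand side is bounded by $C\exp(C\|b_T\|_{\cC^t})\|f\|_H$, and raising to the $q$-th power and taking expectations reduces again to $\bE(\exp(Cq\|b_T\|_{\cC^t}))$, which is handled exactly as in part 2.): Young's inequality for $p>1$ (all $q$), and the restriction $q<\ol q$ for $p=1$. A small point to check is the boundary case $r=1$, where $W^1=W$ and one uses $\|a\|_{W^{1,\infty}(\cD)}\lesssim\exp(C\|b_T\|_{\cC^t})$ with $t\in(1,s-\frac dp)$ instead.

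The main obstacle I expect is the bookkeeping in the $p=1$ case: one must track how the admissible exponential-moment radius $\eps_1$ from Theorem~\ref{thm:randomtreereg} (which itself depends on $\gk$, $s$, $d$, $t$) translates into the threshold $\ol q$, and verify that "$\gk$ sufficiently small" can be chosen uniformly so that the claimed $\ol q\in(1,\infty)$ exists — in particular that one can simultaneously accommodate the constant $C$ coming from the two embeddings $\cC^t\hookrightarrow L^\infty$, $\rC^r\hookrightarrow\cC^t$ and from the estimate $\|\exp(b)\|_{\rC^r}\le C\exp(C\|b\|_{\rC^r})$. The choice of $t$ strictly between $r$ (resp.\ $1$) and $s-\frac dp$ is what makes everything fit, and this is available precisely because of the standing hypothesis $sp>d$.
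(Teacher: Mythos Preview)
Your proposal is correct and follows essentially the same approach as the paper (Proposition~\ref{prob:measurability} plus Theorem~\ref{thm:well-posed} for part 1.), the Lax--Milgram bound together with the exponential moments of Theorem~\ref{thm:randomtreereg} and Young's inequality for part 2.), and Lemma~\ref{lem:ellipticreg} for part 3.)). Two minor remarks: $a_-(\go)^{-1}=\exp(-\essinf_x b_{T,\cD}(x,\go))\le\exp(\|b_{T,\cD}(\go)\|_{L^\infty(\cD)})$ is only an inequality, not an equality (though this is the direction you need); and in part 3.) the paper uses the sharper estimate $\|\exp(b)\|_{\rC^r}\le\exp(\|b\|_{L^\infty})(1+\|b\|_{\rC^r})$ and then splits via H\"older's inequality into an exponential and a polynomial moment of $\|b_T\|_{\cC^r}$, whereas you absorb the polynomial factor into a single exponential moment---both variants work.
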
 

\begin{proof}
1.) As $sp>d$, Theorem~\ref{thm:randomtreereg}	shows that $b_T\in\rC(\bT^d)$ holds $P$-a.s.
Moreover, $b_T:\gO\to \rC(\bT^d)$ is strongly measurable by Proposition~\ref{prob:measurability}, 
and thus in particular strongly $\cA/\cB(L^\infty(\bT^d))$-measurable, since $\cB(\rC(\bT^d))\subset\cB(L^\infty(\bT^d))$.
As $b_{T,\cD}$ in Definition~\ref{def:restrictedbesovprior} is the restriction of ${\rm ext}^{per}b_T$  to $\cD\subset\bT^d$, and $a=\exp\circ\,b_{T,\cD}$, it follows 
that, $a:\gO\to\rC(\ol\cD)$, and $a$ is strongly $\cA/\cB(L^\infty(\cD))$-measurable such that $a_->0$ holds $P$-a.s.
Theorem~\ref{thm:well-posed} then guarantees the $P$-a.s. existence of a unique pathwise weak solution $u(\go)$. Moreover, $u:\gO\to V$ is strongly measurable and Equation~\eqref{eq:ellipticpdeweak} shows that
\begin{equation*}
	\|u(\go)\|_V\le \frac{\|f\|_{V'}}{a_-(\go)}.
\end{equation*}

2.) To show the second part, we fix $t\in(0,s-\frac{d}{p})$ and $q\ge1$ to see that 
\begin{align*}
	\|u\|_{L^q(\gO; V)}^q
	&\le \bE\left(a_-^{-q}\right)\|f\|_{V'}^q \\
	&= \bE\left( \left(\essinf_{x\in\cD}\; \exp(-b_{T,\cD}(x))\right)^q \right)\|f\|_{V'}^q \\
	&= \bE\left( \exp\left(\essinf_{x\in\cD}\;  -qb_{T,\cD}(x)\right) \right)\|f\|_{V'}^q \\
	&\le  \bE\left( \exp\left(\esssup_{x\in\bT^d}\;  qb_T(x)\right) \right)\|f\|_{V'}^q \\
	&= \bE\left(\exp\left(q\|b_T\|_{L^\infty(\bT^d)}\right)\right)\|f\|_{V'}^q \\	
	&\le \bE\left(\exp\left(q\|b_T\|_{\cC^t}\right)\right)\|f\|_{V'}^q.
\end{align*}
We have used that $\exp(\cdot)$ is strictly increasing for the second equality, 
and that $b_T(x)$ is a centered random variable such that $b_T$ and $-b_T$ are equal in distribution. 
\rev{ This implies in turn that $a_-$ and $\|a\|_{L^\infty(\cD)}$ are equal in distribution, which we used in the second inequality.}
The last estimate is due to $\|b_T\|_{L^\infty(\bT^d)}\le \|b_T\|_{\cC^t}$ for any $t>0$.
For $p=1$, we note that $\eps_p$ in the second part of Theorem~\ref{thm:randomtreereg} 
may be chosen as $\eps_p=(\gk C)^{-1}$, where $C>0$ is the constant in~\eqref{eq:moments}. 
\rev{Hence}, for sufficiently small $\gk>0$, we may set $\ol q := \eps_p>1$ in the claim. 
In case that $p>1$, Young's inequality shows that for any $q\ge1$ there is an arbitrary small $\eps>0$ and a constant $C_\eps=\rev{C_\eps(p,q)}\in(0,\infty)$ such that
\begin{equation*}
	q\|b_T\|_{\cC^t} \le \eps \|b_T\|_{\cC^t}^p + C_\eps.
\end{equation*}
Thus, we have no restrictions on $q\in[1,\infty)$, which proves the second part of the claim.

3.) \rev{
Let $\left\|\cdot\right\|$ denote the Euclidean norm on $\cD$. Observe that for any fixed $r\in(0,1)$ we obtain by Taylor expansion and since $\exp(\cdot)$ is strictly increasing that 
\begin{equation}\label{eq:exp-hoelder}
	\begin{split}
		\|\exp(b_{T,\cD})\|_{\rC^r(\ol\cD)}
		&=
		\sup_{x,y\in\cD,\, x\neq y}
		\frac{|\exp(b_{T,\cD}(x))-\exp(b_{T,\cD}(y))|}{\|x-y\|^r} + \|\exp(b_{T,\cD})\|_{L^\infty(\cD)} \\
		&\le 
		\|\exp(b_{T,\cD})\|_{L^\infty(\cD)}\left(
		\sup_{x,y\in\cD,\, x\neq y} \frac{|b_{T,\cD}(x)-b_{T,\cD}(y)|}{\|x-y\|^r} + 1\right) \\
		&\le 
		\exp(\|b_{T,\cD}\|_{L^\infty(\cD)})
		\left( \|b_{T,\cD}\|_{\rC^r(\ol\cD)} + 1\right).
	\end{split}
\end{equation}
We obtain further for $r=1$ that 
\begin{equation} \label{eq:exp-hoelder1}
	\begin{split}
		\|\exp(b_{T,\cD})\|_{\rC^1(\ol\cD)}
		&\le 
		\exp(\|b_{T,\cD}\|_{L^\infty(\cD)})
		\left( \|b_{T,\cD}\|_{\rC^1(\ol\cD)} + 1\right).
	\end{split}
\end{equation}
For any fixed $r\in(0,s-\frac{d}{p})\cap(0,1]$ Lemma~\ref{lem:ellipticreg} now shows that
\begin{equation}\label{eq:sobolevreg}
	\begin{split}
		\|u\|_{L^q(\gO; W^r)}^q	
		&\le 
		C^q\bE\left[a_-^{-q}
		\left(1+a_-^{-\frac{1}{r}}\|\exp(b_{T,\cD})\|_{\rC^r(\ol\cD)}^{\frac{1}{r}}\right)^q
		\right]\|f\|_{H}^q \\
		&\le 
		C^q\bE\left[a_-^{-q}
		\left(1+a_-^{-\frac{1}{r}}
		\exp(\|b_{T,\cD}\|_{L^\infty(\cD)})^{\frac{1}{r}}
		\left( \|b_{T,\cD}\|_{\rC^r(\ol\cD)} + 1\right)^{\frac{1}{r}}\right)^q
		\right]\|f\|_{H}^q \\
		&\le 
		C^q\bE\left[a_-^{-q}
		2^{q-1}\left(1+a_-^{-\frac{q}{r}}
		\exp(\|b_{T,\cD}\|_{L^\infty(\cD)})^{\frac{q}{r}}
		2^{q-1}\left( \|b_{T,\cD}\|_{\rC^r(\ol\cD)}^{\frac{q}{r}} + 1\right)\right)
		\right]\|f\|_{H}^q \\
		&\le 
		C\bE\left[
		\exp(\|b_{T,\cD}\|_{L^\infty(\cD)})^{q+\frac{2q}{r}}
		\left( \|b_{T,\cD}\|_{\rC^r(\ol\cD)}^{\frac{q}{r}} + 1\right)
		\right]\|f\|_{H}^q, 
	\end{split}
\end{equation}
where we have used \eqref{eq:exp-hoelder}, \eqref{eq:exp-hoelder1} in the second step, applied Jensen's inequality twice in the third step, and used again that $b_T$ and  $-b_T$ are equal in distribution together with
$\exp(\|b_{T,\cD}\|_{L^\infty(\cD)}) \ge 1$ to derive the third estimate.
We may further assume without loss of generality that $\|b_{T,\cD}\|_{\rC^r(\ol\cD)}\ge 1$ to obtain with~\eqref{eq:sobolevreg} and Hölder's inequality for $q_1, q_2>1$ such that $\frac{1}{q_1}+\frac{1}{q_2}=1$
\begin{equation}\label{eq:sobolevreg2}
	\begin{split}
		\|u\|_{L^q(\gO; W^r)}^q	
		&\le 
		C\bE\left[
		\exp\left(q_1\left(q+\frac{2q}{r}\right)\|b_T\|_{\rC^r}\right)
		\right]^{\frac{1}{q_1}}
		\bE\left[
		\|b_T\|_{\rC^r}^{\frac{q_2q}{r}}
		\right]^{\frac{1}{q_2}}\|f\|_{H}^q,
	\end{split}
\end{equation}
}
where we have also used that $\|b_{T,\cD}\|_{L^\infty(\cD)}\le \|b_{\rev{T}}\|_{\rC^r}$ and that $\|b_{\rev{T,\cD}}\|_{\rC^r(\ol\cD)} \le \|b_T\|_{\rC^r}$ for any $r>0$.
To bound the Hölder-norm $\|b_T\|_{\rC^r}$ in~\eqref{eq:sobolevreg2}, we first consider the case $r<1$. Then, we recall from Subsection~\ref{subsec:besovspaces} that $\rC^r=\cC^r$ with equivalent norms, thus $\|b_T\|_{\rC^r}\le C\|b_T\|_{\cC^r}$.
If $r=1$, then $s-\frac{d}{p}>1$, and we use the same argument to derive the bound $\|b_T\|_{\rC^r}\le \|b_T\|_{\rC^{r+\eps}}\le C\|b_T\|_{\cC^{r+\eps}}$ for any $\eps\in(0,s-\frac{d}{p}-1)$.

For $p=1$, Theorem~\ref{thm:randomtreereg} now shows again that for sufficiently small $\gk>0$, there are admissible choices $q,\rev{q_1,q_2}\in[1,\infty)$, dependent on $r$, such that the right hand side in~\eqref{eq:sobolevreg2} is finite.
The proof is concluded by noting that $q\in[1,\infty)$ may again be arbitrary large in~\eqref{eq:sobolevreg2} if $p>1$, independent of $r$.
\end{proof}
\section{Pathwise Finite Element Approximation}
\label{sec:approximation}
\subsection{Dimension truncation}\label{subsec:truncation}

To obtain a tractable approximation of $b_T$ in~\eqref{eq:randomtreeprior}, we truncate the wavelet series expansion after $N\in\bN$ scales to obtain the $\emph{truncated random tree Besov prior}$ 
\begin{equation}\label{eq:randomtreepriortrunc}
	b_{T,N}(\go):= \sum_{\substack{(j,k,l)\in \cI_T(\go) \\ j\le N}} \eta_jX_{j,k}^l(\go)\psi_{j,k}^l, \quad \go\in\gO.
\end{equation}

The corresponding diffusion problem in weak form with truncated coefficient for fixed $\go\in\gO$ is to find  $u_N(\go)\in V$ such that for all $v\in V$
\begin{equation}\label{eq:ellipticpdetrunc}
	\int_\cD a_N(\go)\nabla u_N(\go)\cdot\nabla v dx = \dualpair{V'}{V}{f}{v},
\end{equation}
where 
\begin{equation}\label{eq:diffusioncoefftrunc}
	a_N:\gO\to L^\infty(\cD), \quad \go\mapsto \exp(b_{T,N}(\go)|_{\cD}).
\end{equation}
Existence, uniqueness, and regularity of $u_N$ follows analogously as for $u$ in the previous section.

\begin{cor}\label{cor:truncated-regularity}
	Let $N\in\bN$, $a_N=\exp\left(b_{T,N}|_{\cD}\right)$ with 
        $b_{T,N}$ be given as \rev{in~\eqref{eq:randomtreepriortrunc}} for 
        $p\in[1,\infty)$, $s>0$ and $\gb=2^{\gg-d}\in[0,1]$, so that $sp>d$.
	Furthermore, let $f\in V'$. Then the following holds.
	
	\begin{enumerate}[1.)]
		\item 	 
                There exists almost surely a unique weak solution $u_N(\go)\in V$ to the \emph{truncated} 
                Problem~\eqref{eq:ellipticpdetrunc} and $u_N:\gO\to V$ is strongly measurable.
		
		\item 
                For sufficiently small $\gk>0$ in~\eqref{eq:p-exponential}, 
                there are constants $\ol q\in(1,\infty)$ and $C>0$ such that for \emph{any $N\in\bN$} 
		\begin{equation*}
			\|u_N\|_{L^q(\gO; V)}\le C \|f\|_{V'}  <\infty
			\quad
			\begin{cases}
				&\text{for $q\in[1,\ol q)$ if $p=1$, and} \\
				&\text{for any $q\in [1,\infty)$ if $p>1$}.
			\end{cases}
		\end{equation*}
		
		\item 
		Let $r\in (0,s-\frac{d}{p})\cap(0,1]$ and $f\in H$.	
		There are constants $\ol q\in(1,\infty)$ and $C>0$ such that for \emph{any $N\in\bN$} 	
		\begin{equation*}
			\|u_N\|_{L^q(\gO; W^r)}\le C \|f\|_{H} < \infty
			\quad
			\begin{cases}
				&\text{for $q\in[1,\ol q)$ if $p=1$ and} \\
				&\text{for any $q\in [1,\infty)$ if $p>1$}.
			\end{cases}
		\end{equation*}
	\end{enumerate}
\end{cor}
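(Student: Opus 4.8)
The plan is to observe that, for every $N$, the truncated coefficient $b_{T,N}$ is a partial sum of the wavelet series defining $b_T$, and that all the norms entering the proof of Theorem~\ref{thm:solution-regularity} are \emph{monotone under this truncation}. Indeed, the Besov norm~\eqref{eq:besovnorm} and the Hölder--Zygmund norm $\|\cdot\|_{B^t_\infty}$ in~\eqref{eq:besovnorminf} are defined through the wavelet coefficients, and passing from $b_T(\go)$ to $b_{T,N}(\go)$ merely restricts the defining sum, resp.\ supremum, over $\cI_T(\go)$ to its sub-index set $\{(j,k,l)\in\cI_T(\go):j\le N\}$; hence $\|b_{T,N}(\go)\|_{B^t_q}\le\|b_T(\go)\|_{B^t_q}$ pointwise in $\go$, for every $q\in[1,\infty]$ and $t>0$, uniformly in $N$. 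Routing the classical Hölder norm $\|\cdot\|_{\rC^r(\ol\cD)}$ through $\cC^r=B^r_\infty$ when $r\notin\bN$, and through $\cC^{r+\eps}$ with $\eps\in(0,s-\frac dp-1)$ when $r=1$ (exactly as in the proof of Theorem~\ref{thm:solution-regularity}), the same domination transfers to $\|b_{T,N}(\go)\|_{\rC^r(\ol\cD)}\le C\,\|b_T(\go)\|_{\cC^{r}}$, resp.\ $C\,\|b_T(\go)\|_{\cC^{r+\eps}}$, with $C$ independent of $N$. Consequently, every exponential-moment and $L^q(\gO)$-norm bound established for $b_T$ in Theorem~\ref{thm:randomtreereg} holds verbatim for $b_{T,N}$, with the \emph{same} admissible $\eps$ and the \emph{same} finite value of the bound, for all $N\in\bN$.

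With this domination in hand, part~1.)\ follows as in Theorem~\ref{thm:solution-regularity}, 1.): $b_{T,N}$ is a finite sum $\sum_{(j,k,l)\in\cI_T,\,j\le N}\eta_jX_{j,k}^l\psi_{j,k}^l$ of real-valued random variables against fixed functions $\psi_{j,k}^l\in\rC^\ga(\bT^d)\subset\rC(\bT^d)$, the random restriction being encoded by the measurable indicators $\indi_{\{\mf n\in T(\cdot)\}}$ as in the proof of Proposition~\ref{prob:measurability}; hence $b_{T,N}:\gO\to\rC(\bT^d)$ is strongly measurable, so $a_N=\exp(b_{T,N}|_\cD):\gO\to\rC(\ol\cD)\subset L^\infty(\cD)$ is strongly $\cA/\cB(L^\infty(\cD))$-measurable with $a_{N,-}(\go)=\exp(\min_{\ol\cD}b_{T,N}(\go))>0$ $P$-a.s. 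Theorem~\ref{thm:well-posed} then provides the unique pathwise weak solution $u_N(\go)\in V$ of~\eqref{eq:ellipticpdetrunc} and the strong measurability of $u_N:\gO\to V$.

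For parts~2.)\ and 3.)\ I would repeat the chains of inequalities in the proof of Theorem~\ref{thm:solution-regularity} with $(b_T,a,u)$ replaced by $(b_{T,N},a_N,u_N)$: the $L^q(\gO;V)$-estimate uses only $\|u_N(\go)\|_V\le\|f\|_{V'}/a_{N,-}(\go)$, the equality in distribution of $b_{T,N}$ and $-b_{T,N}$ (whence $a_{N,-}$ and $\|a_N\|_{L^\infty(\cD)}$ are equal in distribution, $T$ being independent of the symmetric $X_{j,k}^l$), and $\|b_{T,N}\|_{L^\infty(\cD)}\le\|b_{T,N}\|_{\cC^t}$; the $W^r$-estimate uses in addition Lemma~\ref{lem:ellipticreg}, the elementary bounds~\eqref{eq:exp-hoelder}--\eqref{eq:exp-hoelder1} applied to $\exp(b_{T,N})$, and Hölder's inequality, exactly as in~\eqref{eq:sobolevreg}--\eqref{eq:sobolevreg2}. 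The terminal step invokes the $N$-uniform bounds from the first paragraph: for $p=1$ one fixes $\gk>0$ in~\eqref{eq:p-exponential} small enough that $\ol q:=\eps_p>1$ is admissible, and for $p>1$ one uses Young's inequality $q\|b_{T,N}\|_{\cC^t}\le\eps\|b_{T,N}\|_{\cC^t}^p+C_\eps$ to admit any $q<\infty$. Since the only $N$-dependence could enter through moments of $b_{T,N}$, and these are dominated by those of $b_T$, the resulting constants $\ol q$ and $C$ are independent of $N$. The proof is essentially bookkeeping; the one point deserving care, and the reason the statement is worth isolating, is the truncation monotonicity of the wavelet-characterized norms, which is precisely what upgrades the bounds of Theorem~\ref{thm:solution-regularity} to estimates \emph{uniform in the truncation level} $N$, as needed for the finite element and MLMC error analysis in Sections~\ref{sec:approximation}--\ref{sec:mlmc}.
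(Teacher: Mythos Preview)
Your proposal is correct and follows essentially the same approach as the paper: the paper's proof is a one-line reference to Theorems~\ref{thm:randomtreereg} and~\ref{thm:solution-regularity} together with the single observation that $\|b_{T,N}(\go)\|_{B^t_q}\le\|b_T(\go)\|_{B^t_q}$ holds $P$-a.s.\ for all $t>0$, $q\in[1,\infty]$ and $N\in\bN$, which is exactly the truncation monotonicity you identify and then unpack in detail.
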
 

\begin{proof}
	The result follows analogously to Theorems \ref{thm:randomtreereg} and~\ref{thm:solution-regularity}, 
    upon observing that $\|b_{T,N}(\go)\|_{B^t_q}\le \|b_{T}(\go)\|_{B^t_q}$
        holds $P$-a.s. for any $t>0$, $q\in [1,\infty]$, and $N\in\bN$.
\end{proof}

The important observation from Corollary~\ref{cor:truncated-regularity} is that the bounds are independent of $N$, which is crucial when estimating the finite element discretization error of $u_N$ in the next subsection. 
We bound the truncation errors $a-a_N$ and $u-u_N$ in the remainder of this section.

\begin{prop}\label{prop:truncation}
	Let $a:=\exp\left(b_{T,\cD}\right)$ with 
        $b_{T,\cD}$ as given in Definition~\ref{def:restrictedbesovprior}
        with $p\in(1,\infty)$, $s>0$, $\gb=2^{\gg-d}\in[0,1]$ and such that $sp>d+\min(\gg,0)$.
	Let $b_{T,N}$ and $a_N$ be the approximations of $b_T$ and $a$ 
        for given $N\in\bN$ as \rev{in~\eqref{eq:randomtreepriortrunc}} and \eqref{eq:diffusioncoefftrunc}, respectively.
	\begin{enumerate}[1.)]
		\item 
                For any $q\ge 1$ and $t\in(0,s-\frac{d}{p}-\frac{\min(\gg,0)}{q})$ there is a 
                constant $C>0$ such that for every $N\in\bN$ it holds
		\begin{align*}
			\| b_{T,\cD}-b_{T,N}|_\cD \|_{L^q(\gO; \cC^t(\ol\cD))}
			\le 
			C 2^{N(t-s+\frac{d}{p}+\frac{\min(\gg,0)}{q})}.
		\end{align*}
		\item 	Moreover, for any $q\ge 1$, $\eps>0$ and $t\in(0,s-\frac{d}{p}-\frac{\min(\gg,0)}{q})$ there is a $C>0$ such that for every $N\in\bN$ it holds
		\begin{align*}
			\|a-a_N\|_{L^q(\gO; \cC^t(\ol\cD))}
			\le 
			C 2^{N(t-s+\frac{d}{p}+\frac{\min(\gg+\eps,0)}{q})}.
		\end{align*}
	\end{enumerate}
\end{prop}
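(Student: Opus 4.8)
The plan is to derive both bounds from the tail of the wavelet series together with the moment estimates already available in Theorem~\ref{thm:randomtreereg}.

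\medskip

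\textbf{Part 1.} The key observation is that $b_{T,\cD} - b_{T,N}|_\cD$ is itself (the restriction of) a Besov random tree prior, but with the \emph{low} scales removed: writing the difference as $\sum_{(j,k,l)\in\cI_T,\, j>N}\eta_j X_{j,k}^l\psi_{j,k}^l$, one can redo the computation in part~1 of the proof of Theorem~\ref{thm:randomtreereg} but starting the sum over $j$ at $N+1$ instead of $0$. Concretely, using $\|\varphi|_\cD\|_{\cC^t(\ol\cD)}\le C\|\varphi\|_{\cC^t}= C\|\varphi\|_{B^t_\infty}$ and the embedding $B^t_{q_0}\hookrightarrow \cC^t$ with $t = t_0 - d/q_0$ (embedding constant $\le 1$) as in~\eqref{eq:besovembedding}, together with $\bE(v(j)) = 2^{j\gg}$ and Wald's identity, one gets
\begin{equation*}
	\bE\left(\|b_{T,\cD}-b_{T,N}|_\cD\|_{\cC^t(\ol\cD)}^{q_0}\right)
	\le
	C\,\bE(|X_{1,1}|^{q_0})\sum_{j=N+1}^\infty 2^{jq_0(t - s + \frac{d}{p} + \frac{\gg}{q_0})}.
\end{equation*}
Choosing $q_0 = q$ when $\gg>0$ (so the exponent is $t - s + d/p < 0$ after absorbing the $\gg/q$ via the factor-of-two trick of~\eqref{eq:moments}) and $q_0=q$, $t_0 = t + d/q$ when $\gg\le 0$ (so the exponent is $q(t-s+d/p+\gg/q)<0$ by the hypothesis $t < s - d/p - \min(\gg,0)/q$), the geometric series starting at $N+1$ is bounded by a constant times $2^{N q_0(t-s+d/p+\min(\gg,0)/q_0)}$. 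Taking $q_0$-th roots gives $C\,2^{N(t-s+d/p+\min(\gg,0)/q)}$, as claimed. (When $p=1$ one would need the $p>1$ hypothesis to be in force — which it is, by assumption of the proposition — so there is no restriction on $q$.)

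\medskip

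\textbf{Part 2.} For the exponential coefficient I would use the pathwise Lipschitz-type bound for $\exp$ on Hölder spaces, exactly as in~\eqref{eq:exp-hoelder}: for $t\in(0,1)$,
\begin{equation*}
	\|a - a_N\|_{\cC^t(\ol\cD)}
	= \|\exp(b_{T,\cD}) - \exp(b_{T,N}|_\cD)\|_{\cC^t(\ol\cD)}
	\le C\,\exp\!\big(\max(\|b_{T,\cD}\|_{L^\infty},\|b_{T,N}|_\cD\|_{L^\infty})\big)\big(1 + \text{(sum of }\cC^t\text{ norms)}\big)\,\|b_{T,\cD} - b_{T,N}|_\cD\|_{\cC^t(\ol\cD)},
\end{equation*}
i.e. the difference of the exponentials is controlled by the difference $b_{T,\cD}-b_{T,N}|_\cD$ times an exponential-of-Hölder-norm prefactor. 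Then I would apply Hölder's inequality in $\gO$ with a pair of conjugate exponents $q_1,q_2$: the prefactor (an exponential of a Hölder norm) has finite $q_1$-th moment for $q_1$ in the appropriate range by part~2 of Theorem~\ref{thm:randomtreereg}, and the factor $\|b_{T,\cD}-b_{T,N}|_\cD\|_{\cC^t(\ol\cD)}$ is estimated in $L^{q_2 q}(\gO)$ by Part~1 of the present proposition, which contributes the decay $2^{N(t - s + d/p + \min(\gg,0)/(q_2 q))}$. The loss from $q_2>1$ is absorbed into the arbitrary $\eps>0$ by choosing $q_2$ close enough to $1$; since $\min(\gg,0)/(q_2 q)\ge \min(\gg,0)/(q)\cdot$(something) the cleanest bookkeeping is to note $\min(\gg+\eps,0)/q \ge \min(\gg,0)/(q_2 q)$ for $q_2$ near $1$, matching the stated exponent. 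The case $t=1$ is handled by working with $\cC^{t+\eps'}$ for small $\eps'$ and using $\cC^{t}\hookleftarrow\cC^{t+\eps'}$, which is harmless since $s-d/p-\min(\gg,0)/q>1$ in that regime.

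\medskip

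\textbf{Main obstacle.} The bookkeeping in Part~2 is the delicate point: one must simultaneously (a) keep the Hölder exponent $t$ strictly below the critical threshold so that all moments invoked from Theorem~\ref{thm:randomtreereg} are finite, (b) split the product via Hölder's inequality without degrading the exponential-moment range (which is genuinely finite, not all of $[1,\infty)$, when $p=1$ — but here $p>1$ is assumed, so exponential moments of all orders are available and this is not actually a constraint), and (c) track how the conjugate exponent $q_2$ dilutes the $\min(\gg,0)/q$ term in the rate, and argue that this dilution is absorbed into the $\eps$ appearing in the statement. Getting the interplay between $q_1$, $q_2$, $t$, and $\eps$ consistent — rather than the individual estimates, which are routine — is where care is needed.
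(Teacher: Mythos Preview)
Your Part~1 has a gap in the case $\gg>0$. With any \emph{fixed} $q_0$ the geometric tail yields
\[
\|b_T-b_{T,N}\|_{L^{q_0}(\gO;\cC^t)} \le C\, 2^{N(t-s+\frac{d}{p}+\frac{\gg}{q_0})},
\]
and the strictly positive contribution $\gg/q_0$ in the exponent does not vanish. The ``factor-of-two trick'' from~\eqref{eq:moments} you invoke only guarantees that the series $\sum_{j>N}(\dots)$ converges for $t<s-\tfrac{d}{p}$; it does nothing to the prefactor $2^{N(t-s+d/p+\gg/q_0)}$ extracted from that sum. To reach the claimed rate $2^{N(t-s+d/p)}$ when $\gg>0$, the paper lets $q_0$ \emph{depend on $N$}: with $q_0=\max\bigl(\gg N,\,2\gg(s-\tfrac{d}{p}-t)^{-1},\,q\bigr)$ one has $2^{N\gg/q_0}\le 2$, while the remaining tail $\bigl(\sum_{j\ge1}2^{jq_0(t-s+d/p)/2}\bigr)^{1/q_0}$ stays bounded uniformly in $q_0$ via $(\sum a_j)^{1/q_0}\le\sum a_j^{1/q_0}$ for $q_0\ge1$. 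This $N$-dependent auxiliary exponent is the missing idea. Your treatment of $\gg\le0$ with $q_0=q$ is fine and matches the paper.

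Your Part~2 is correct but follows a different route. You bound $\|e^{b_{T,\cD}}-e^{b_{T,N}|_\cD}\|_{\cC^t}$ directly by a Lipschitz-type estimate for $\exp$ on H\"older spaces, then split with H\"older's inequality in $\gO$. The paper instead factors $a-a_N=e^{b_{T,N}|_\cD}\bigl(e^{b_{T,\cD}-b_{T,N}|_\cD}-1\bigr)$ and exploits the \emph{independence} of $b_{T,N}$ and $b_T-b_{T,N}$ to factor the $L^q(\gO)$-norm exactly before any H\"older loss, then controls $e^{b_{T,\cD}-b_{T,N}|_\cD}-1$ in $\cC^t$ via Taylor expansion. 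Both arguments succeed because $p>1$ makes all exponential moments available; the paper's independence trick saves one H\"older split and hence a bit of bookkeeping, while your direct estimate is more robust in that it does not rely on the tail being independent of the truncation.
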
 

\begin{proof}
	1.) Let $q_0\ge q$, $t_0>\frac{d}{q_0}$ and $t = t_0-\frac{d}{q_0}$, 
        so that $B_{q_0}^{t_0}\hookrightarrow \cC^t$.	
	For any fixed $N\in\bN$, we obtain with Hölder's inequality 
        analogously to the proof of Theorem~\ref{thm:randomtreereg} 
        the estimate
	\begin{equation}
		\begin{split}\label{eq:truncsum}
			\| b_{T,\cD}-b_{T,N}|_\cD \|_{L^q(\gO; \cC^t(\ol\cD))}
			& \le\|b_T-b_{T,N}\|_{L^q(\gO; \cC^t)} 
                        \\
			&\le 
			\bE\left(
			\|b_T-b_{T,N}\|^{q_0}_{\cC^t}
			\right)^{\frac{1}{q_0}} 
                        \\
			&\le 
			\bE\left(
			\|b_T-b_{T,N}\|^{q_0}_{B_{q_0}^{t_0}}
			\right)^{\frac{1}{q_0}} \\
			&\le 
			\left(\sum_{j=N+1}^\infty 
			2^{jq_0(t_0 - \frac{d}{q_0} -s + \frac{d}{p}+\frac{\gg}{q_0})}
			\right)^{\frac{1}{q_0}} \\
			&=
			2^{N(t -s + \frac{d}{p}+\frac{\gg}{q_0})}
			\left(\sum_{j=1}^{\infty} 
			2^{jq_0(t - s + \frac{d}{p} + \frac{\gg}{q_0})}
			\right)^{\frac{1}{q_0}}. 		 
		\end{split}
	\end{equation}
	Now let $t<s-\frac{d}{p}$ and 
        $\gg\in(0,d]$ in~\eqref{eq:truncsum}, and choose 
        $q_0=\max\left(\gg N, 2\gg (s-\frac{d}{p}-t)^{-1}, q\right)$
        (for sufficiently large, given $N$) to obtain that
	\begin{equation}\label{eq:truncationest1}
		\begin{split}
			\|b_{T,\cD}-b_{T,N}|_\cD \|_{L^q(\gO; \cC^t(\ol\cD))}
			&\le 2^{N(t -s + \frac{d}{p}) + 1}
			\left(\sum_{j=1}^{\infty} 
			2^{jq_0(t - s + \frac{d}{p})\frac{1}{2}}
			\right)^{\frac{1}{q_0}} 		 \\
			&\le 2^{N(t -s + \frac{d}{p})}
			2\left(\sum_{j=1}^{\infty} 
			2^{j(t - s + \frac{d}{p})\frac{1}{2}}
			\right).
		\end{split}
	\end{equation}
	The final bound in~\eqref{eq:truncationest1} is independent of $q_0=q_0(N)$,
	which shows the first part of the claim for $\gg\in(0,d]$.
	For $\gg\in(-\infty,0]$, we use $q_0 = q$ in~\eqref{eq:truncsum} 
        to obtain for any $t\in(0,s-\frac{d}{p}-\frac{\gg}{q})$ that 
	\begin{equation}\label{eq:truncationest2}
		\| b_{T,\cD}-b_{T,N}|_\cD \|_{L^q(\gO; \cC^t(\ol\cD))}
		\le 2^{N(t - s + \frac{d}{p} + \frac{\gg}{q})}
		\left(\sum_{j=1}^{\infty} 
		2^{jq(t - s + \frac{d}{p} + \frac{\gg}{q})}
		\right)^{\frac{1}{q}}  		 
		\le \rev{C} 2^{N(t - s + \frac{d}{p} + \frac{\gg}{q})}.
	\end{equation}

	2.) \rev{
	To prove the second part, we observe that for any $t\in(0, s-\frac{d}{p}-\frac{\min(\gg,0)}{q})$ there holds
	\begin{align*}
		\|a-a_N\|_{L^q(\gO; \cC^t(\ol\cD))}
		&\le
		\|e^{b_{T,N}|_\cD}(e^{b_{T,\cD} - b_{T,N}|_\cD}-1)\|_{L^q(\gO; \cC^t(\ol\cD))}
		\\ &\le
		\|e^{b_{T,N}|_\cD}\|_{L^q(\gO; \cC^t(\ol\cD))}
		\|e^{b_{T,\cD} - b_{T,N}|_\cD}-1\|_{L^q(\gO; \cC^t(\ol\cD))},
	\end{align*}
	where the last equation follows by independence of $b_T - b_{T,N}$ and $b_{T,N}$.
	The first factor in this expression 
        is bounded by analogously to $\|e^{b_{T,\cD}}\|_{L^q(\gO; \cC^t(\ol\cD))}$ 
        by using the estimates ~\eqref{eq:exp-hoelder} (resp.~\eqref{eq:exp-hoelder1}),  
        Theorem~\ref{thm:randomtreereg} and Hölder's inequality
	\begin{equation} \label{eq:exp-N}
		\|e^{b_{T,N}|_\cD}\|_{L^q(\gO; \cC^t(\ol\cD))}
		\le 
		\|e^{b_{T,N}|_\cD}\|_{L^{q_1q}(\gO; L^\infty(\ol\cD))}
		\left(\|b_{T,N}|_\cD\|_{L^{q_2q}(\gO; \cC^t(\ol\cD))} + 1\right)<\infty,
	\end{equation}
	where $q_1,q_2>1$ are such that $\frac{1}{q_1}+\frac{1}{q_2}=1$ 
        and the bound holds uniformly in $N$.
	In addition, Taylor expansion yields 
	\begin{align*}
		\|e^{b_{T,\cD} - b_{T,N}|_\cD}-1\|_{\cC^t(\ol\cD)}
		&=
		\sup_{x,y\in\cD,\; x\neq y} 
		\frac{|e^{(b_{T,\cD} - b_{T,N}|_\cD)(x)}-e^{(b_{T,\cD} - b_{T,N}|_\cD)(y)}|}{\|x-y\|^r}
		+ 
		\|e^{b_{T,\cD} - b_{T,N}|_\cD}-1\|_{L^\infty(\cD)} \\
		&\le
		 \|e^{b_{T,\cD} - b_{T,N}|_\cD}\|_{L^\infty(\cD)}\sup_{x,y\in\cD,\; x\neq y} 
		 \frac{|(b_{T,\cD} - b_{T,N}|_\cD)(x)-(b_{T,\cD} - b_{T,N}|_\cD)(y)|}{\|x-y\|^r} \\
		 &\quad+ 
		 \|e^{b_{T,\cD} - b_{T,N}|_\cD}\|_{L^\infty(\cD)}
		 \|b_{T,\cD} - b_{T,N}|_\cD\|_{L^\infty(\cD)} \\
		 &\le 
		 \|e^{b_{T,\cD} - b_{T,N}|_\cD}\|_{L^\infty(\cD)}
		 \|b_{T,\cD} - b_{T,N}|_\cD\|_{\cC^t(\ol\cD)}.
	\end{align*}
	From the proof of the second part of Theorem~\ref{thm:solution-regularity} it follows that 
	$\|e^{b_{T,\cD} - b_{T,N}|_\cD}\|_{L^q(\gO; L^\infty(\cD))}<\infty$
	is bounded uniformly with respect to $N$ for all $q\ge 1$.
	Hölder's inequality for $p_1,p_2>1$ such that $\frac{1}{p_1}+\frac{1}{p_2}=1$
	thus shows together with the truncation error in~\eqref{eq:truncationest2} that 
	\begin{align*}
		\|a-a_N\|_{L^q(\gO; \cC^t(\ol\cD))}
		&\le
		\|e^{b_{T,N}|_\cD}(e^{b_{T,\cD} - b_{T,N}|_\cD}-1)\|_{L^q(\gO; \cC^t(\ol\cD))}
		\\ &\le 
		C \|e^{b_{T,\cD} - b_{T,N}|_\cD}\|_{L^{p_1q}(\gO; L^\infty(\cD))}
		\|b_{T,\cD} - b_{T,N}|_\cD\|_{L^{p_2q}(\gO; \cC^t(\ol\cD))}
		\\ &\le C 2^{N(t - s + \frac{d}{p} + \frac{\min(\gg,0)}{p_2q})}
	\end{align*}
	}
	The claim follows for any $\eps>0$ by choosing 
        $p_2>1$ so small that $\min(\gg, 0) \le p_2\min(\gg+\eps, 0)$.
\end{proof}

\begin{rem}\label{rem:truncation}
We emphasize that all estimates in Proposition~\ref{prop:truncation} are independent of $\cD\subset\bT^d$, 
as all uniform error bounds are derived with respect to $\bT^d$.
Proposition~\ref{prop:truncation} shows in particular that for any 
$q\ge 1$ and $t\in(0, s-\frac{d}{p}-\frac{\min(\gg,0)}{q})$ there is a $C>0$ such that 
for any $N\in\bN$ it holds
	\begin{align*}
		\|a-a_N\|_{L^q(\gO; L^\infty(\cD))}
		\le 
		C 2^{-Nt}.
	\end{align*}
	This estimate is essential to bound the truncation error $u-u_N$ of the approximated elliptic problem in~\eqref{eq:ellipticpdetrunc}, see Theorem~\ref{thm:u-truncation} below.
	In the borderline case $p=1$ with sufficiently small $\gk>0$ and $sp>d$, we still recover the slightly weaker estimates
	\begin{align}\label{eq:p1-truncation}
		\|a-a_N\|_{L^q(\gO; \cC^t(\ol\cD))}
		\le 
		C 2^{N(t-s+\frac{d}{p})},
		\quad 
		\|a-a_N\|_{L^q(\gO; L^\infty(\cD))}
		\le 
		C 2^{-tN}
	\end{align}
	for \emph{sufficiently small} $q\ge 1$ (depending on $\gk$) and $t\in(0,s-\frac{d}{p})$, \emph{independently} of $\gamma$. This may be seen from by letting $p_1\to1$ and $p_2\to \infty$ in the last part of the proof for Proposition~\ref{prop:truncation}.
\end{rem}

\begin{thm}\label{thm:u-truncation}
	Let $u$ be as in~\eqref{eq:ellipticpde} with $a=\exp\left(b_{T,\cD}\right)$ 
        and let $u_N$ be as in~\eqref{eq:ellipticpde} with $a_N=\exp\left(b_{T,N}|_\cD \right)$ 
        given by \eqref{eq:diffusioncoefftrunc}. 
	Furthermore, let $b_{T,\cD}$ be such that $p\in(1,\infty)$, $s>0$, $\gb=2^{\gg-d}\in[0,1]$, and $sp >d\ge d + \min(\gg,0)$.
	Then, for any $q\ge 1$ and $t\in(0, s-\frac{d}{p}-\frac{\min(\gg,0)}{q})$ 
        there is a $C>0$ such that for every $N\in\bN$ and it holds
	\begin{align*}
		\|u-u_N\|_{L^q(\gO; V)}\le C 2^{-Nt}.
	\end{align*}
\end{thm}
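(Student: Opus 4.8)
The plan is to reduce the statement to the pathwise Lipschitz dependence of the solution on the diffusion coefficient, which is precisely Proposition~\ref{prop:perturbation}, and then to pass from the pathwise estimate to the $L^q(\gO;V)$-bound via Hölder's inequality. Both $a=\exp(b_{T,\cD})$ and $a_N=\exp(b_{T,N}|_\cD)$ are strongly $\cA/\cB(L^\infty(\cD))$-measurable with $P$-a.s.\ positive essential infima by Theorem~\ref{thm:solution-regularity}(1.)) and Corollary~\ref{cor:truncated-regularity}(1.)), so Proposition~\ref{prop:perturbation} applies with $a_1=a$, $a_2=a_N$ and yields, for $P$-a.e.\ $\go$,
\[
	\|u(\go)-u_N(\go)\|_V \;\le\; \frac{\|f\|_{V'}}{a_-(\go)\,a_{N,-}(\go)}\,\|a(\go)-a_N(\go)\|_{L^\infty(\cD)}.
\]
Raising this to the $q$-th power, integrating over $\gO$, and applying the generalized Hölder inequality with three exponents $1/q_1+1/q_2+1/q_3=1$ — concretely one may take $q_1=q_2=4$, $q_3=2$ — gives
\[
	\|u-u_N\|_{L^q(\gO;V)} \;\le\; \|f\|_{V'}\,\big\|a_-^{-1}\big\|_{L^{qq_1}(\gO)}\,\big\|a_{N,-}^{-1}\big\|_{L^{qq_2}(\gO)}\,\|a-a_N\|_{L^{qq_3}(\gO;L^\infty(\cD))}.
\]

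For the last factor I would invoke Remark~\ref{rem:truncation}: since $qq_3\ge q$ and $\min(\gg,0)\le 0$, the fixed $t\in(0,s-\tfrac{d}{p}-\tfrac{\min(\gg,0)}{q})$ also lies in $(0,s-\tfrac{d}{p}-\tfrac{\min(\gg,0)}{qq_3})$, so $\|a-a_N\|_{L^{qq_3}(\gO;L^\infty(\cD))}\le C\,2^{-Nt}$. The two reciprocal-coefficient factors I would bound \emph{pathwise}: because $b_{T,N}$ is obtained from $b_T$ by discarding wavelet coefficients, $\|b_{T,N}\|_{\cC^t}=\|b_{T,N}\|_{B^t_\infty}\le\|b_T\|_{B^t_\infty}=\|b_T\|_{\cC^t}$, and since $a_-^{-1}=\exp(-\essinf_{x\in\cD} b_{T,\cD}(x))$ and restriction to $\cD\subseteq\bT^d$ only decreases $L^\infty$-norms,
\[
	a_-^{-1}\le \exp(\|b_{T,\cD}\|_{L^\infty(\cD)})\le \exp(\|b_T\|_{L^\infty(\bT^d)})\le \exp(\|b_T\|_{\cC^t}),
	\qquad a_{N,-}^{-1}\le \exp(\|b_T\|_{\cC^t}),
\]
the latter bound holding uniformly in $N$. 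It therefore suffices to know that $\exp(m\|b_T\|_{\cC^t})\in L^1(\gO)$ for every $m\ge 1$, and this is exactly where the hypothesis $p>1$ enters: Young's inequality gives $m\|b_T\|_{\cC^t}\le \eps\|b_T\|_{\cC^t}^p+C_{\eps,m}$ for arbitrarily small $\eps>0$, so the exponential-moment bound in part 2.) of Theorem~\ref{thm:randomtreereg} (applicable since $s-\tfrac{d}{p}>t>0$, guaranteed by $sp>d$, the admissible range for $t$ being nonempty thanks to $sp>d+\min(\gg,0)$) yields $\bE(\exp(m\|b_T\|_{\cC^t}))<\infty$. Combining the three estimates and absorbing $\|f\|_{V'}$ and all constants into a single $C$ gives $\|u-u_N\|_{L^q(\gO;V)}\le C\,2^{-Nt}$.

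There is no serious obstacle here; the argument is essentially a combination of three already-available ingredients. The two points requiring care are: (i) choosing the Hölder exponents so that the rescaled integrability exponent $qq_3$ in the truncation factor still admits the prescribed $t$ — automatic, as dividing $\min(\gg,0)\le 0$ by a larger positive number only enlarges the admissible interval; and (ii) the role of $p>1$, without which only finitely many polynomial moments of $a_-^{-1}$ would be available, forcing the restriction $q<\ol q$ that appears in Theorem~\ref{thm:solution-regularity}. The present statement is posed for $p\in(1,\infty)$ precisely so that all such moments are at hand and the bound holds for every $q\ge 1$.
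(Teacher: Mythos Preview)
Your approach is exactly the paper's: pathwise Lipschitz continuity from Proposition~\ref{prop:perturbation}, Hölder in $\gO$ with three factors, then moment bounds on $a_-^{-1}$, $a_{N,-}^{-1}$ via Theorem~\ref{thm:randomtreereg} and the truncation bound from Remark~\ref{rem:truncation}. However, the reasoning in your point (i) is backwards and creates a genuine gap when $\gg<0$.

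Write $\min(\gg,0)=-c$ with $c\ge 0$. The admissible range in Remark~\ref{rem:truncation} at integrability exponent $Q$ is $t<s-\tfrac{d}{p}+\tfrac{c}{Q}$, which \emph{shrinks} as $Q$ increases. With your fixed choice $q_3=2$ you need the bound at exponent $Q=2q$, giving only $t<s-\tfrac{d}{p}+\tfrac{c}{2q}$, strictly less than the $s-\tfrac{d}{p}+\tfrac{c}{q}$ claimed in the theorem. A second, related slip: you invoke the exponential moment of $\|b_T\|_{\cC^t}$ from Theorem~\ref{thm:randomtreereg}(2.) for the \emph{same} $t$, but that result requires $t<s-\tfrac{d}{p}$, which can fail for the $t$ in the statement when $\gg<0$.

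Both issues are local and easily repaired. For the coefficient inverses, use any fixed auxiliary $t'\in(0,s-\tfrac{d}{p})$ (available since $sp>d$): one only needs $\|b_T\|_{L^\infty}\le\|b_T\|_{\cC^{t'}}$, and then $\exp(m\|b_T\|_{\cC^{t'}})\in L^1(\gO)$ for all $m$ as you argued. For the truncation factor, do \emph{not} fix $q_3=2$: given $t<s-\tfrac{d}{p}+\tfrac{c}{q}$, choose $q_3>1$ close enough to $1$ (hence $q_1,q_2$ large) so that $t<s-\tfrac{d}{p}+\tfrac{c}{qq_3}$ still holds; since the exponential moments are finite for all orders when $p>1$, the first two factors remain bounded. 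This is precisely how the paper proceeds (``$q_3>q$ may be chosen arbitrarily close to $q$'' in its parametrisation).
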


\begin{proof}
	For fixed $\go\in\gO$ and $N\in\bN$, we obtain by Proposition~\ref{prop:perturbation}
	\begin{equation*}
		\begin{split}
			\|u(\go)-u_N(\go)\|_V 
			\le 
			\frac{\|f\|_{V'}}{a_{-}(\go)a_{N,-}(\go)}\|a(\go)-a_N(\go)\|_{L^\infty(\cD)},
		\end{split}
	\end{equation*}
	where $a_{N,-}(\go):=\essinf_{x\in\cD} a_N(\go,x)$. 
	Taking expectations yields with Hölder's inequality 
	\begin{equation}\label{eq:utruncation}
		\begin{split}
			\|u-u_N\|_{L^q(\gO;V)} 
			\le 
			\|f\|_{V'}
		\|a_-^{-1}\|_{L^{q_1}(\gO)}
		\|a_{N,-}^{-1}\|_{L^{q_2}(\gO)}
		\|a-a_N\|_{L^{q_3}(\gO;L^\infty(\cD))},
		\end{split}
	\end{equation}
	where $q_1,q_2,q_3>1$ are such that $\frac{1}{q}=\sum_{i=1}^3\frac{1}{q_i}$ and $\|f\|_{V'}<\infty$.
	As in the proof of part 2.) in Theorem~\ref{thm:well-posed}, 
        we conclude for any $q_1\in[1,\infty)$ and $t\in(0,s-\frac{d}{p})$ with Theorem~\ref{thm:randomtreereg} 
        that
	\begin{equation*}
		\|a_-^{-1}\|_{L^{q_1}(\gO)} 
		\le \|\exp(\| b_{T,\cD }\|_{L^\infty(\cD)})\|_{L^{q_1}(\gO)}
		\le \|\exp(\|b_{T}\|_{\cC^t})\|_{L^{q_1}(\gO)}
		<\infty.
	\end{equation*}
	Similarly, it follows for all $q_2\in[1,\infty)$ that
	\begin{equation*}
		\|a_{N,-}^{-1}\|_{L^{q_2}(\gO)} 
		\le \|\exp(\|b_{T,N}\|_{\cC^t})\|_{L^{q_2}(\gO)}
		\le \|\exp(\|b_T\|_{\cC^t})\|_{L^{q_2}(\gO)}
		<\infty,
	\end{equation*}
	where we emphasize that the last bound is uniform with respect to $N$.
	Proposition~\ref{prop:truncation} and Remark~\ref{rem:truncation} show for $q_3\in[1,\infty)$ and $t\in(0,s-\frac{d}{p}-\frac{\min(\gg,0)}{q_3})$ that
	\begin{align*}
		\|a-a_N\|_{L^{q_3}(\gO; L^\infty(\cD))}
		\le 
		C 2^{-Nt}.
	\end{align*}
	This, together with~\eqref{eq:utruncation}, shows the claim, as $q_3>q$ may be chosen arbitrary close to $q$, and 
	\begin{equation*}
		\|a_-^{-1}\|_{L^{q_1}(\gO)}+\|a_{N,-}^{-1}\|_{L^{q_2}(\gO)}\le C <\infty
	\end{equation*}
	holds for all $q_1,q_2\in[1,\infty)$ with $C=C(q_1,q_2)>0$, and uniform with respect to $N$.
\end{proof}

\begin{rem}\label{rem:u-truncation}
	In view of Remark~\ref{rem:truncation}, we note that for $p=1$ with sufficiently small $\gk>0$ and $sp>d$ there holds the slightly weaker estimate 
	\begin{align*}
		\|u-u_N\|_{L^q(\gO; V)}
		\le 
		C 2^{-Nt}.
	\end{align*}
	for \emph{sufficiently small} $q\ge 1$ (depending on $\gk$) and $t\in(0,s-\frac{d}{p})$, \emph{independently} of $\gamma$. This may also be seen by letting $q_1,q_2\to \frac{1}{2q}$ and $q_3\to\infty$ in the proof of Theorem~\ref{thm:u-truncation}.
\end{rem}
\subsection{Finite element discretization}
\label{subsec:fem}
The solution $u_N:\gO\to V$ to Problem~\eqref{eq:ellipticpdetrunc} 
with truncated diffusion coefficient is still not fully tractable, 
as it takes values in the infinite-dimensional Hilbert space $V$. 
Thus, we consider Galerkin-finite element approximations of $u_N$ in a finite-dimensional subspace of $V$. 
Corollary~\ref{cor:truncated-regularity} provides the necessary regularity of $u_N$, 
independent of the truncation index $N$, therefore we fix $N\in\bN$ for the remainder of this section. 

We partition the convex, polytopal 
domain $\cD\subset \bT^{d}$, $d\in\{1,2,3\}$ by a 
sequence of simplices (intervals/triangles/tetrahedra) 
or parallelotopes (intervals/parallelograms/parallelepipeds), 
denoted by $(\cK_h)_{h\in \mfH}$. 
The refinement parameter $h>0$ takes values in a countable index set $\mfH \subset (0,\infty)$ 
and corresponds to the longest edge of a simplex/parallelotope $K\in\cK_h$. 
We impose the following assumptions on $(\cK_h)_{h\in \mfH}$ 
to obtain a sequence of "well-behaved" triangulations.
\begin{assumption}\label{ass:triangulation}
	The sequence $(\cK_h)_{h\in \mfH}$ satisfies:
	\begin{enumerate}
	\item \emph{Admissibility:} 
        For each $h\in \mfH$, $\cK_h$ consists of open, 
        non-empty simplices/parallelotopes $K$ 
        such that 
	\begin{itemize}
		\item $\ol \cD = \bigcup_{K\in\cK_h} \ol K$, 
		\item $K_1\cap K_2=\emptyset$ for any two $K_1, K_2\in \cK_h$ such that $K_1\neq K_2$, and 
		\item the intersection $\ol K_1\cap \ol K_2$ for $K_1\neq K_2$ is 
                      either empty, a common edge, a common vertex, 
                      or (in space dimension $d=3$) a common face of $K_1$ and $K_2$.
	\end{itemize}
	\item \emph{Shape-regularity:} 
         Let $\rho_{K,in}$ and $\rho_{K,out}$ denote the 
         radius of the largest in- and circumscribed circle, respectively, for a given $K\in\cK_h$.  
         There is a constant $\rho > 0$ such that 
	\begin{equation*}
		\rho : = \sup_{h\in \mfH} \sup_{K\in\cK_h} \frac{\rho_{K,out}}{\rho_{K,in}} < \infty.
	\end{equation*}
	\end{enumerate}
\end{assumption} 

Based on a given tesselation $\cK_h$, we define the space of piecewise (multi-)linear finite elements 
\begin{equation*}
	V_h:=
	\begin{cases}
		\{v\in V|\; \text{$v|_T$ is linear for all $K\in\cK_h$}\},
		&\quad\text{if $\cK_h$ consists of simplices}, \\
		\{v\in V|\; \text{$v|_T$ is $d$-linear for all $K\in\cK_h$}\},
		&\quad\text{if $\cK_h$ consists of parallelotopes}.
	\end{cases}
\end{equation*}
Clearly, $V_h\subset V$ is a finite-dimensional space and we define $n_h:=\dim(V_h)\in \bN$.
This yields for fixed $\go\in\gO$ the \emph{fully discrete problem} to find $u_{N,h}(\go)\in V_h$ such that for all $v_h\in V_h$
\begin{equation}\label{eq:ellipticpdediscrete}
	\int_\cD a_N(\go)\nabla u_{N,h}(\go)\cdot\nabla v_h dx = \dualpair{V'}{V}{f}{v_h}.
\end{equation}

\begin{thm}\label{thm:H1error}
	Let $(\cK_h)_{h\in \mfH}$ be a sequence of triangulations satisfying Assumption~\ref{ass:triangulation}, 
        and let $u_N$ and $u_{N,h}$ be the pathwise weak solutions to 
        \eqref{eq:ellipticpdetrunc} and \eqref{eq:ellipticpdediscrete}.
	Furthermore, let $N\in\bN$, $a_N$ be given as in~\eqref{eq:diffusioncoefftrunc} for $p\in[1,\infty)$ and $s>0$, 
        such that $sp>d$, and with $\gb=2^{\gg-d}\in[0,1]$.
	
	For any $f\in H$, sufficiently small $\gk>0$ in~\eqref{eq:p-exponential} and 
            any $r\in (0,s-\frac{d}{p})\cap (0,1]$, 
        there are constants $\ol q\in(1,\infty)$ and $C>0$ such that for any $N\in\bN$ and $h\in \mfH$ 
        there holds 
	\begin{equation*}
		\|u_N-u_{N,h}\|_{L^q(\gO; V)}\le C h^r 
		\quad
		\begin{cases}
			&\text{for $q\in[1,\ol q)$ if $p=1$, and} \\
			&\text{for any $q\in [1,\infty)$ if $p>1$}.
		\end{cases}
	\end{equation*}
\end{thm}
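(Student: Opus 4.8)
The plan is to combine a pathwise C\'ea-type estimate with a finite element interpolation bound phrased in terms of the interpolation space $W^r$ of \eqref{eq:interp-space}, and then to integrate over $\gO$ using the exponential-moment bounds for $b_T$ from Section~\ref{sec:besov-rv}. Fix any $t\in(0,s-\frac dp)$; this interval is nonempty because $s-\frac dp>r>0$. By the third part of Corollary~\ref{cor:truncated-regularity}, for $P$-a.e.\ $\go$ the coefficient $a_N(\go)=\exp(b_{T,N}(\go)|_\cD)$ lies in $\rC(\ol\cD)$ with $0<a_{N,-}(\go):=\essinf_{x\in\cD}a_N(x,\go)\le\|a_N(\go)\|_{L^\infty(\cD)}=:a_{N,+}(\go)<\infty$, and $u_N(\go)\in W^r$. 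The bilinear form in \eqref{eq:ellipticpdetrunc} is symmetric, $a_{N,-}(\go)$-coercive and $a_{N,+}(\go)$-bounded on $V$, so Galerkin orthogonality for \eqref{eq:ellipticpdediscrete} and the resulting energy-norm best-approximation property give, for $P$-a.e.\ $\go$,
\[
	\|u_N(\go)-u_{N,h}(\go)\|_V
	\le
	\sqrt{\frac{a_{N,+}(\go)}{a_{N,-}(\go)}}\ \inf_{v_h\in V_h}\|u_N(\go)-v_h\|_V .
\]
Since $a_N=\exp(b_{T,N}|_\cD)$ yields $\sqrt{a_{N,+}(\go)/a_{N,-}(\go)}\le\exp(\|b_{T,N}(\go)\|_{L^\infty(\cD)})\le\exp(\|b_T(\go)\|_{\cC^t})$, this reduces matters to a finite element interpolation estimate for $u_N(\go)\in W^r$, with constants uniform in $N$. (Strong measurability of $\go\mapsto u_{N,h}(\go)$ follows as in Theorem~\ref{thm:well-posed}: the Galerkin map $a\mapsto u_h$ is Lipschitz on $S$ with values in $V_h$, and $a_N$ is strongly measurable.)

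The main step is to show that for a fixed stable quasi-interpolation operator $I_h:V\to V_h$ (for instance the Scott--Zhang operator, which preserves homogeneous Dirichlet data) one has $\|v-I_hv\|_V\le Ch^r\|v\|_{W^r}$ for all $v\in W^r$, with $C$ depending only on $\cD$, $r$ and the shape-regularity constant $\rho$ of Assumption~\ref{ass:triangulation}. For $r=1$ this is the classical bound $\|v-I_hv\|_V\le Ch\|v\|_{H^2(\cD)}\le Ch\|v\|_W$, using $W\hookrightarrow H^2(\cD)$ from Lemma~\ref{lem:interp-sobolev}. For $r\in(0,1)$ I would interpolate the operator $\mathrm{id}-I_h$ between its two endpoint bounds, namely $\|(\mathrm{id}-I_h)v\|_V\le C_0\|v\|_V$ ($H^1$-stability of $I_h$) and $\|(\mathrm{id}-I_h)v\|_V\le C_1h\|v\|_W$ for $v\in W$: by the interpolation property of the real $K$-method, $\mathrm{id}-I_h$ then maps $[V,W]_{r,\infty}\to[V,V]_{r,\infty}=V$ with operator norm at most $C_0^{1-r}(C_1h)^r$, which by the definition of $W^r$ is precisely $\|(\mathrm{id}-I_h)v\|_V\le Ch^r\|v\|_{W^r}$. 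I expect this interpolation-space estimate to be the crux: passing instead through the Sobolev embedding $W^r\hookrightarrow H^{1+r_0}(\cD)$ for $r_0<r$ from Lemma~\ref{lem:interp-sobolev}, together with classical finite element theory, would only give the rate $h^{r_0}$, so interpolating the operator directly is what recovers the sharp exponent $h^r$. Combining this with the first paragraph gives, for $P$-a.e.\ $\go$,
\[
	\|u_N(\go)-u_{N,h}(\go)\|_V\le C\,h^r\,\exp(\|b_T(\go)\|_{\cC^t})\,\|u_N(\go)\|_{W^r} .
\]

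Finally, I would take $L^q(\gO)$-norms and split with H\"older's inequality, $\tfrac1q=\tfrac1{q_1}+\tfrac1{q_2}$,
\[
	\|u_N-u_{N,h}\|_{L^q(\gO;V)}
	\le
	C\,h^r\,\|\exp(\|b_T\|_{\cC^t})\|_{L^{q_1}(\gO)}\,\|u_N\|_{L^{q_2}(\gO;W^r)} .
\]
The second factor is finite and bounded uniformly in $N$ by the third part of Corollary~\ref{cor:truncated-regularity}, for every $q_2\ge1$ if $p>1$ and for $q_2$ below a $\gk$-dependent threshold if $p=1$. For the first factor, using $\|b_T\|_{L^\infty}\le\|b_T\|_{\cC^t}$, if $p>1$ Young's inequality gives $q_1\|b_T\|_{\cC^t}\le\eps\|b_T\|_{\cC^t}^p+C_\eps$ with $\eps>0$ arbitrarily small, so the second part of Theorem~\ref{thm:randomtreereg} makes $\bE(\exp(q_1\|b_T\|_{\cC^t}))<\infty$ for every $q_1\ge1$; hence the bound holds for all $q\in[1,\infty)$. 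If $p=1$, that theorem yields finiteness only for $q_1<\eps_p$, where $\eps_p=\eps_p(\gk)$ can be made large by taking $\gk>0$ small; choosing $\gk$ so that $\eps_p$ and the threshold for $q_2$ are both large and letting $\ol q$ be their harmonic-type combination, the H\"older split is admissible with $\ol q>1$, which gives the claim for $q\in[1,\ol q)$. Uniformity in $N$ throughout follows from $\|b_{T,N}(\go)\|_{\cC^t}\le\|b_T(\go)\|_{\cC^t}$ and the $N$-independence of the bounds in Corollary~\ref{cor:truncated-regularity}.
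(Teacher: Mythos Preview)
Your proof is correct, and it differs from the paper's in the finite element interpolation step. The paper passes through the Sobolev embedding $W^r\hookrightarrow H^{1+r_0}(\cD)$ for $r_0<r$ (Lemma~\ref{lem:interp-sobolev}) and then invokes the standard bound $\inf_{v_h\in V_h}\|u_N-v_h\|_V\le Ch^{r_0}\|u_N\|_{H^{1+r_0}(\cD)}$, obtaining only the rate $h^{r_0}$; it then recovers the statement for the given $r$ by exploiting the strict inequality $r<s-\tfrac dp$ to relabel (choose an auxiliary $r'\in(r,s-\tfrac dp)$ and take $r_0=r<r'$). By contrast, you interpolate the operator $\mathrm{id}-I_h:[V,W]_{r,\infty}\to V$ directly between its endpoint bounds, which yields the sharp rate $h^r$ on $W^r$ without any relabeling and without using Lemma~\ref{lem:interp-sobolev} at all. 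Your route is cleaner and would also give the rate at the endpoint if that ever mattered; the paper's route stays closer to textbook FE results in $H^{1+r_0}$ at the price of the $\varepsilon$-loss. The remaining steps---the C\'ea/energy estimate (where your $\sqrt{a_{N,+}/a_{N,-}}$ is even slightly tighter than the paper's $a_{N,+}/a_{N,-}$), the bound $\|b_{T,N}\|_{L^\infty(\cD)}\le\|b_T\|_{\cC^t}$, the H\"older split, and the appeal to Theorem~\ref{thm:randomtreereg} and Corollary~\ref{cor:truncated-regularity}---match the paper in substance, with your two-factor H\"older split replacing the paper's three-factor version.
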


\begin{proof}
	We recall that $a_{N,-}(\go):=\essinf_{x\in\cD} a_{N,-}(\go)>0$ and obtain by Cea's Lemma
	\begin{equation}\label{eq:cea}
		\|u_N(\go)-u_{h,N}(\go)\|_V
                \le 
                 \frac{\|a_N(\go)\|_{L^\infty(\cD)}}{a_{N,-}(\go)} \|f\|_{V'} 
                 \inf_{v_h\in V_h} \|u_N(\go) - v_h\|_{V}.
	\end{equation}
	
	Now first suppose that $p>1$. Since $f\in H$, it holds by 
        Corollary~\ref{cor:truncated-regularity} for any $q\ge 1$ 
        that $u_N\in L^q(\gO; W^r)$ for $r\in(0, s-\frac{d}{p})\cap (0,1]$.
	For $0<s-\frac{d}{p}\le 1$, we have $r\in(0,s-\frac{d}{p})$, and 
	Lemma~\ref{lem:interp-sobolev}, 
    shows $u_N\in L^q(\gO; H^{1+r_0}(\cD))$ for any $r_0\in(0,r)$.
	It hence follows for $r_0\in (0,r)$ that 
	\begin{equation}\label{eq:fem-error}
		\inf_{v_h\in V_h} \|u_N(\go) - v_h\|_{V}\le C \|u_N(\go)\|_{H^{1+ r_0}(\cD)} h^{r_0}.
	\end{equation}
	This is a standard result for first order Lagrangian FEM, 
	see, e.g., \cite[Theorems 8.62/8.69]{H17} or \cite[Theorem 4.4.20]{BS08}. 
	The constant $C>0$ in~\eqref{eq:fem-error} depends on the shape-regularity parameter 
	$\rho$ and on $\cD$, but is independent of $u_N$ and $h$.
	Combining \eqref{eq:cea} and \eqref{eq:fem-error} 
     shows with Hölder's inequality
	\begin{equation}\label{eq:fem-estimate}
		\begin{split}
			\|u_N(\go)-u_{h,N}(\go)\|_{L^q(\gO; V)}
			&\le
			C  \|f\|_{V'} \|a_N\|_{L^{3q}(\gO; L^\infty(\cD))}
			\|a_{N,-}^{-1}\|_{L^{3q}(\gO)}
			\|u_N\|_{L^{3q}(\gO; H^{1+r_0}(\cD))}
			h^{r_0} 
                        \\
			&\le
			C  \|a_N\|_{L^{3q}(\gO; L^\infty(\cD))}^2
			\|u_N\|_{L^{3q}(\gO; H^{1+ r_0 }(\cD))}
			h^{r_0} \\
			&\le 
			C  \|a_N\|_{L^{3q}(\gO; L^\infty(\cD))}^2
			\|u_N\|_{L^{3q}(\gO; W^r)}
			h^{r_0} 
                        \\
			&\le C h^{r_0}.
		\end{split}
	\end{equation}
	We have used that $a_{N,-}$ and $\|a_N\|_{L^\infty(\cD)}$ are equal in distribution for the second estimate,
	and Lemma~\ref{lem:interp-sobolev} in the third line.
	The last step follows for any $q\in[1,\infty)$ by Corollary~\ref{cor:truncated-regularity} and Proposition~\ref{prop:truncation} since $p>1$.
	Moreover, as a further consequence of Corollary~\ref{cor:truncated-regularity} and Proposition~\ref{prop:truncation}, the constant $C>0$ in the final estimate in~\eqref{eq:fem-estimate} bounded independently of $N$ and $h$. 
	Since $0<s-\frac{d}{p}\le 1$, we may choose $r_0<r<s-\frac{d}{p}$ arbitrary close to $s-\frac{d}{p}$. 
	
	On the other hand, if $s-\frac{d}{p}>1$ and $r=1$, Lemma~\ref{lem:interp-sobolev} implies that $u_N\in L^q(\gO; H^{2}(\cD))$.
	Estimates~\eqref{eq:fem-error} and~\eqref{eq:fem-estimate} then hold for $r_0=r=1$, which proves the claim in case that $p>1$.

	For $p=1$ and given $q\ge1$, 
        we need to assume in addition that $\gk>0$ be sufficiently small 
        such that Corollary~\ref{cor:truncated-regularity} and~\eqref{eq:p1-truncation} 
        in Remark~\ref{rem:truncation} hold with $q$ replaced $3q$. 
        In this case, the claim for $p=1$ follows analogously as for $p>1$.
\end{proof}

\rev{In the proof of Theorem~\ref{thm:H1error}, we obtain exponential moments of power $3q$ by Hölder's inequality. For the case $p=1$, we therefore need approximately that $\gk < \frac{1}{3q}$ (up to summation constants) to counter-balance this exponent and obtain $a_N \in L^{3q}(\gO; L^\infty(\cD))$ and $u_N \in L^{3q}(\gO; W^r)$.}

\begin{thm}\label{thm:L2error}
	Let the assumptions of Theorem~\ref{thm:H1error} hold.
	For any $f\in H$, sufficiently small $\gk>0$ in~\eqref{eq:p-exponential} 
        and 
        for any $r\in (0,s-\frac{d}{p})\cap (0,1]$, 
        there are constants $\ol q\in(1,\infty)$ and $C>0$ 
        such that for any $N\in\bN$ and $h\in\mfH$ there holds 
	\begin{equation*}
		\|u_N-u_{N,h}\|_{L^q(\gO; H)}\le C h^{2r} 
		\quad
		\begin{cases}
			&\text{for $q\in[1,\ol q)$ if $p=1$, and} \\
			&\text{for any $q\in [1,\infty)$ if $p>1$}.
		\end{cases}
	\end{equation*}
\end{thm}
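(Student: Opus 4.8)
The plan is to run the classical Aubin--Nitsche duality argument pathwise and then pass to $L^q(\gO)$-norms, controlling all random prefactors with the (uniform-in-$N$) exponential- and polynomial-moment bounds of Theorem~\ref{thm:randomtreereg} and Corollary~\ref{cor:truncated-regularity}, exactly in the spirit of the proofs of Theorems~\ref{thm:solution-regularity} and~\ref{thm:H1error}. Fix $\go\in\gO$, write $e:=u_N(\go)-u_{N,h}(\go)\in V$, and recall the Galerkin orthogonality $\int_\cD a_N(\go)\nabla e\cdot\nabla v_h\,dx=0$ for all $v_h\in V_h$ from~\eqref{eq:ellipticpdetrunc}--\eqref{eq:ellipticpdediscrete}. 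Introduce the pathwise dual problem: find $z(\go)\in V$ with $\int_\cD a_N(\go)\nabla v\cdot\nabla z(\go)\,dx=(e,v)_H$ for all $v\in V$, which is well posed by Lax--Milgram since $a_N(\go)\in L^\infty(\cD)$ with $a_{N,-}(\go)>0$ $P$-a.s. and $e\in H\hookrightarrow V'$. Testing with $v=e$, subtracting an arbitrary $z_h\in V_h$ via Galerkin orthogonality, and using Cauchy--Schwarz gives
\[
\|e\|_H^2 \le \|a_N(\go)\|_{L^\infty(\cD)}\,\|e\|_V\,\inf_{z_h\in V_h}\|z(\go)-z_h\|_V .
\]

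For the dual approximation error I would apply the explicit elliptic regularity estimate of Lemma~\ref{lem:ellipticreg} to $z(\go)$ (its data is $e\in H$ and its coefficient $a_N(\go)$ is in $\rC^{\bar r}(\ol\cD)$ $P$-a.s. for a suitable $\bar r$ slightly above the prescribed $r$, taking $\bar r=r=1$ in the borderline case $r=1$), combined with Lemma~\ref{lem:interp-sobolev} and the standard first-order FE estimate. This bounds $\inf_{z_h}\|z(\go)-z_h\|_V$ by $C\|z(\go)\|_{W^{\bar r}}h^{r}\le C\,a_{N,-}(\go)^{-1}\bigl(1+(\|a_N(\go)\|_{\rC^{\bar r}(\ol\cD)}/a_{N,-}(\go))^{1/\bar r}\bigr)\|e\|_H\,h^{r}$, the choice $\bar r>r$ being admissible precisely because $r<s-\tfrac dp$ (or $r=1\le s-\tfrac dp$). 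Inserting this into the displayed inequality and cancelling one factor $\|e\|_H$ yields the pathwise bound $\|e\|_H\le C\,\Phi_N(\go)\,h^{r}\,\|e\|_V$, where $\Phi_N(\go)$ is a product of nonnegative powers of $\|a_N(\go)\|_{L^\infty(\cD)}$, $a_{N,-}(\go)^{-1}$ and $\|a_N(\go)\|_{\rC^{\bar r}(\ol\cD)}$.

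Next I would substitute the pathwise $H^1$-bound read off from the proof of Theorem~\ref{thm:H1error}, namely $\|e\|_V\le C\,\Psi_N(\go)\,h^{r}\|f\|_H$ with $\Psi_N(\go)$ again a product of nonnegative powers of the same three quantities (using Cea's Lemma, Lemma~\ref{lem:ellipticreg} for $u_N$, Lemma~\ref{lem:interp-sobolev} and Corollary~\ref{cor:truncated-regularity}). This gives $\|e\|_H\le C\,\Phi_N(\go)\Psi_N(\go)\,h^{2r}\|f\|_H$, hence $\|u_N-u_{N,h}\|_{L^q(\gO;H)}\le C h^{2r}\|\Phi_N\Psi_N\|_{L^q(\gO)}\|f\|_H$, and it remains to bound $\|\Phi_N\Psi_N\|_{L^q(\gO)}$ uniformly in $N$. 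Via~\eqref{eq:exp-hoelder}--\eqref{eq:exp-hoelder1}, the identity in law $a_{N,-}\stackrel{d}{=}\|a_N\|_{L^\infty(\cD)}=\exp\|b_{T,N}\|_{L^\infty(\cD)}$, the bounds $\|b_{T,N}\|_{L^\infty(\cD)}\le\|b_{T,N}\|_{\cC^{t}}\le\|b_T\|_{\cC^t}$ and $\|b_{T,N}\|_{\rC^{\bar r}(\ol\cD)}\le C\|b_T\|_{\cC^{\bar r}}$ (replacing $\bar r$ by $\bar r+\eps$ if $\bar r\in\bN$), and Hölder's inequality to separate the exponential factors from the polynomial ones, this reduces to the finiteness of $\bE\exp(c\|b_T\|_{\cC^t})$ and $\bE\|b_T\|_{\cC^{\bar r}}^{m}$ for finitely many constants $c,m$, granted by parts~1.) and~2.) of Theorem~\ref{thm:randomtreereg} (for $p>1$ with any $q$; for $p=1$ provided $\gk>0$ in~\eqref{eq:p-exponential} is small enough and $q$ is restricted accordingly).

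The main obstacle is this last bookkeeping in the $p=1$ case: the two successive applications of the elliptic regularity estimate generate powers of $\|a_N\|_{L^\infty(\cD)}$ and of $a_{N,-}^{-1}$ of total order roughly $2+2/r$, so after raising to the $q$-th power and redistributing by Hölder's inequality one must check that the demanded exponential moments still fall within the admissible range $\eps\in(0,\eps_p)$ of Theorem~\ref{thm:randomtreereg}; this is the same mechanism that forces $q\in[1,\ol q)$ in Theorems~\ref{thm:solution-regularity} and~\ref{thm:H1error}, and it fixes $\ol q$ here as well. For $p>1$, Young's inequality absorbs any polynomial power into an arbitrarily small multiple of $\|b_T\|_{\cC^t}^p$, so all moments are finite and $q\in[1,\infty)$ is unrestricted.
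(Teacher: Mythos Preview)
Your proposal is correct and follows essentially the same Aubin--Nitsche duality route as the paper: pathwise dual problem, elliptic regularity of the dual via Lemma~\ref{lem:ellipticreg}, Galerkin orthogonality, cancellation of one $\|e\|_H$, and then moment control of the random prefactors through Theorem~\ref{thm:randomtreereg} and Corollary~\ref{cor:truncated-regularity}. The only cosmetic difference is that you keep everything pathwise until the very end and collect all random factors into a single product $\Phi_N\Psi_N$ before taking the $L^q$-norm, whereas the paper interleaves H\"older splittings in $L^{3q/2}(\gO)$ with the duality steps and uses the reparametrization $r_0<r$ rather than your $\bar r>r$; both lead to the same bound.
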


\begin{proof}
	The proof uses the well-known \emph{Aubin-Nitsche duality} argument. Let $e_{N,h}:=u_N-u_{N,h}$ and consider for fixed $\go\in\gO$ the dual problem to find $\varphi(\go)\in V$ such that for all $v\in V$ it holds
	\begin{equation}\label{eq:dualproblem}
		\int_\cD a_N(\go)\nabla \varphi(\go)\cdot\nabla v dx = \dualpair{V'}{V}{e_{N,h}(\go)}{v}.
	\end{equation}
	We need to investigate the regularity and integrability of $\varphi$ as a first step. Lemma~\ref{lem:ellipticreg} shows that
	\begin{equation}\label{eq:dualregularity1}
		\|\varphi(\go)\|_{W^r}
		\le 
		\frac{C}{a_-(\go)}
		\left(1+\left(\frac{\|a(\go)\|_{\rC^r(\ol\cD)}}{a_-(\go)}\right)^{1/r}\right)
		\|e_{N,h}(\go)\|_H.
	\end{equation}
	Let $t\in(0, s-\frac{d}{p})$ be fixed. 
        We integrate both sides of~\eqref{eq:dualregularity1} and use Hölder's inequality 
        as in the third part of Theorem~\ref{thm:solution-regularity} \rev{(cf. Inequality~\eqref{eq:sobolevreg2})} 
        to obtain for $q_0\ge 1$ and 
        \rev{
        $q_1,\dots,q_3\in[1,\infty)$ such that $1=\sum_{i=1}^3\frac{1}{q_i}$ 
	\begin{equation}\label{eq:dualregularity2}
		\begin{split}
			\|\varphi\|_{L^{q_0}(\gO; W^r)}^{q_0}	\le
			&C\,\bE\left[
			\exp\left(q_1\left(q_0+\frac{2q_0}{r}\right)\|b_T\|_{\rC^r}\right)
			\right]^{\frac{1}{q_1}}
			\bE\left[
			\|b_T\|_{\rC^r}^{\frac{q_2q_0}{r}}
			\right]^{\frac{1}{q_2}}
			\bE\left[\|e_{N,h}\|_{H}^{q_0q_3}\right]^{1/q_3}
		\end{split}
	\end{equation}
	Note that we have again assumed that $\|b_{T,\cD}\|_{\rC^r(\ol\cD)}\ge 1$ without loss of generality to derive \eqref{eq:dualregularity2}.
	}

	By Theorems~\ref{thm:randomtreereg},~\ref{thm:H1error} and Proposition~\ref{prop:truncation}, we now conlude that the right hand side in~\eqref{eq:dualregularity2} is finite and bounded uniformly in $N$ for any $q_0\ge 1$ if $p>1$, as the Hölder conjugates $q_1,\dots,q_{\rev 3}\in[1,\infty)$ may be arbitrary large. 
	For $p=1$,  we further need that $\gk>0$ in~\eqref{eq:p-exponential} is sufficiently small, 
        so that $\eps_p>q_0\max(q_1\rev{(1+\frac{1}{r})}, \frac{q_2}{r})$ in Theorem~\ref{thm:randomtreereg} 
        and that $\ol q\ge q_0q_{\rev 3}$ in Theorem~\ref{thm:H1error}.
	Given that $\gk>0$ is sufficiently small, there is for any $p\ge 1$ a $q_0\ge 1$ such that $\varphi\in L^{q_0}(\gO; W^r)$.
	\vskip 2pt
	For the next step, we combine Equations \eqref{eq:ellipticpdetrunc} and \eqref{eq:ellipticpdediscrete} to show the Galerkin orthogonality
	\begin{equation}\label{eq:galerkinorth}
		\int_\cD a_N(\go)\nabla e_{N,h}(\go)\cdot\nabla v_h dx = 0,
		\quad 
		v_h\in V_h.
	\end{equation}
	Let $P_h:V\to V_h$ denote the $V$-orthogonal projection onto $V_h$. 
	Testing with $v=e_{N,h}(\go)\in V$ in \eqref{eq:dualproblem} 
         then shows together with $v_h=P_h\varphi(\go)$ in \eqref{eq:galerkinorth} that
	\begin{equation}\label{eq:dualestimate}
		\begin{split}
			\|e_{N,h}(\go)\|_H^2
			&= \int_\cD a_N(\go)\nabla \varphi(\go)\cdot \nabla e_{N,h}(\go) dx \\
			&\le \|a_N(\go)\|_{L^\infty(\cD)} \|e_{N,h}(\go)\|_V \|(I-P_h)\varphi(\go)\|_V.
		\end{split}		
	\end{equation}
	Estimate~\eqref{eq:dualestimate} then yields for $q\in[1,\infty)$ with Hölder's inequality
	\begin{equation*}\label{eq:L2error}
		\begin{split}
			\|e_{N,h}\|_{L^q(\gO; H)}
			\le \|a_N\|_{L^{\frac{3q}{2}}(\gO; L^\infty(\cD))} 
			\|e_{N,h}\|_{L^{\frac{3q}{2}}(\gO; V)} 
			\|(I-P_h)\varphi\|_{L^{\frac{3q}{2}}(\gO; V)}.
		\end{split}		
	\end{equation*}
	\vskip 2pt
	First, suppose again that $p>1$, where $\varphi\in L^{q_0}(\gO; W^r)$ holds for any $q_0\ge 1$.
	Proposition~\ref{prop:truncation} and Theorem~\ref{thm:H1error} yield
	\begin{equation*}
		\begin{split}
			\|e_{N,h}\|_{L^q(\gO; H)}
			\le C h^{r}\|(I-P_h)\varphi\|_{L^{\frac{3q}{2}}(\gO; V)},
		\end{split}		
	\end{equation*}
	where $C>0$ is independent of $N$ and $h$.
	Lemma~\ref{lem:interp-sobolev}, $\varphi\in L^{q_0}(\gO; W^r)$, and \eqref{eq:fem-error} 
        further show that
	\begin{equation}\label{eq:L2error2}
		\begin{split}
			\|e_{N,h}\|_{L^q(\gO; H)}
			\le C h^{r+r_0},
			\quad
			r_0\in(0,r)\cup\{\floor r\}.
		\end{split}		
	\end{equation} 
	The claim follows as in Theorem~\ref{thm:H1error}, 
        since $r=r_0=1$ if $s-\frac{d}{p}>1$, and $r_0<r<s-\frac{d}{p}$ 
        may be arbitrary close to $s-\frac{d}{p}$ otherwise.  
	\vskip 2pt
	For $p=1$ and given $q\ge 1$ on the other hand, we need to assume that $\gk>0$ is sufficiently small
	so that $\varphi\in L^{q_0}(\gO; W^r(\cD))$ for $q_0=\frac{3q}{2}$, 
        and that \eqref{eq:p1-truncation} and Theorem~\ref{thm:H1error} hold with $q$ replaced by $\frac{3q}{2}$. 
        The claim then follows as for $p>1$ from~\eqref{eq:L2error}.
\end{proof}

Bounds on the overall approximation errors with respect to $V$ and $H$ now follow 
as an immediate consequence of Theorems~\ref{thm:u-truncation},~\ref{thm:H1error},~\ref{thm:L2error} 
and Remark~\ref{rem:u-truncation}.

\begin{cor}\label{cor:fullerror}
	Let the assumptions of Theorem~\ref{thm:H1error} hold, 
        let $f\in H$, let $t\in(0,s-\frac{d}{p})$ and 
        assume given $r\in (0,s-\frac{d}{p})\cap (0,1]$.
        Then there holds:
	\begin{enumerate}[1.)]
		\item For $p=1$ and sufficiently small $\gk>0$ in~\eqref{eq:p-exponential}, there are constants $\ol q=\ol q(\gk)\in(1,\infty)$ and $C>0$ such that for any $q\in[1,\ol q)$, $N\in\bN$ and $h\in\mfH$ there holds 
		\begin{align*}
			\|u-u_{N,h}\|_{L^q(\gO; V)}&\le C(2^{-tN}+h^{r}), \\
			\|u-u_{N,h}\|_{L^q(\gO; H)}&\le C(2^{-tN}+h^{2r}). 
		\end{align*}
		
		\item For $p\in(1,\infty)$ and any $q\in[1,\infty)$ there is a constant $C>0$ such that for any $N\in\bN$ and $h\in\mfH$ there holds 
		\begin{align*}
			\|u-u_{N,h}\|_{L^q(\gO; V)}&\le C(2^{N(-t+\frac{\min(\gg,0)}{q})}+h^{r}), \\
			\|u-u_{N,h}\|_{L^q(\gO; H)}&\le C(2^{N(-t+\frac{\min(\gg,0)}{q})}+h^{2r}). 
		\end{align*}
	\end{enumerate}
\end{cor}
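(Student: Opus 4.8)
The plan is to obtain both estimates from a single triangle inequality, splitting $u-u_{N,h}=(u-u_N)+(u_N-u_{N,h})$ into the fractal-scale truncation error and the finite element error, and then to insert the bounds already established in Theorems~\ref{thm:u-truncation},~\ref{thm:H1error} and~\ref{thm:L2error} together with Remark~\ref{rem:u-truncation}. Concretely, for every $N\in\bN$ and $h\in\mfH$ I would write
\begin{equation*}
	\|u-u_{N,h}\|_{L^q(\gO; V)}\le \|u-u_N\|_{L^q(\gO; V)}+\|u_N-u_{N,h}\|_{L^q(\gO; V)},
\end{equation*}
and likewise with $V$ replaced by $H$. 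For the $H$-norm split I would additionally use the continuous embedding $V\hookrightarrow H$ (Poincar\'e's inequality), so that $\|u-u_N\|_{L^q(\gO; H)}\le C\|u-u_N\|_{L^q(\gO; V)}$; thus the truncation contribution is controlled by the same rate in both norms, while the FE contribution improves from $h^{r}$ (Theorem~\ref{thm:H1error}) to $h^{2r}$ (Theorem~\ref{thm:L2error}).

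For $p\in(1,\infty)$ the FE term is $\le Ch^{r}$ (resp. $\le Ch^{2r}$) for every $q\in[1,\infty)$, uniformly in $N$, directly by Theorems~\ref{thm:H1error} and~\ref{thm:L2error}. For the truncation term, given the fixed $t\in(0,s-\frac{d}{p})$ I would apply Theorem~\ref{thm:u-truncation} with the smoothness exponent $t-\frac{\min(\gg,0)}{q}$ in place of $t$: this is admissible because $t-\frac{\min(\gg,0)}{q}\ge t>0$ and $t-\frac{\min(\gg,0)}{q}<s-\frac{d}{p}-\frac{\min(\gg,0)}{q}$, and it yields $\|u-u_N\|_{L^q(\gO; V)}\le C\,2^{-N\left(t-\frac{\min(\gg,0)}{q}\right)}=C\,2^{N\left(-t+\frac{\min(\gg,0)}{q}\right)}$. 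Adding the two bounds gives part~2.). For $p=1$ the reasoning is identical in structure, except that Theorems~\ref{thm:H1error},~\ref{thm:L2error} and Remark~\ref{rem:u-truncation} each demand $\gk>0$ small and restrict $q$ to some $[1,\ol q_i)$ with $\ol q_i=\ol q_i(\gk)>1$; here Remark~\ref{rem:u-truncation} only provides the weaker rate $2^{-tN}$ for $t\in(0,s-\frac{d}{p})$, independently of $\gg$, which is exactly what part~1.) asserts.

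The only genuinely delicate point — and the closest thing to an obstacle — is the bookkeeping for $p=1$: one must choose a single $\gk>0$ small enough that the truncation estimate of Remark~\ref{rem:u-truncation}, the $H^1$ FE estimate of Theorem~\ref{thm:H1error}, and its Aubin--Nitsche refinement in Theorem~\ref{thm:L2error} all hold simultaneously, and then set $\ol q:=\min_i\ol q_i$ and verify $\ol q>1$. Since each of the quoted results already guarantees a threshold strictly above $1$ for small $\gk$, the intersection of finitely many such intervals is again a nondegenerate interval $[1,\ol q)$, and no new estimate is required beyond assembling the pieces; the constants remain independent of $N$ and $h$ because every cited bound is.
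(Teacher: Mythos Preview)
Your proposal is correct and follows precisely the route the paper indicates: the corollary is stated there as an immediate consequence of Theorems~\ref{thm:u-truncation},~\ref{thm:H1error},~\ref{thm:L2error} and Remark~\ref{rem:u-truncation}, and your triangle-inequality splitting $u-u_{N,h}=(u-u_N)+(u_N-u_{N,h})$ together with the substitution $t\mapsto t-\frac{\min(\gg,0)}{q}$ in Theorem~\ref{thm:u-truncation} (for $p>1$) and the use of $V\hookrightarrow H$ for the $H$-norm truncation contribution is exactly how these pieces assemble. The $p=1$ bookkeeping you outline---taking $\ol q$ as the minimum of the finitely many thresholds---is the only additional care needed and is handled correctly.
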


\section{Multilevel Monte Carlo Estimation}\label{sec:mlmc}

We consider \MC\, estimation of $\bE(\Psi(u))$ for a given functional $\Psi$ and $u$ as solution to~\eqref{eq:ellipticpdeweak} 
with Besov random tree coefficient $a$. 
We replace $u$ by a tractable approximation $u_{N,h}$ to evaluate $\Psi(u_{N,h})\approx\Psi(u)$ 
and bound the overall error consisting of the pathwise discretization from 
Section~\ref{sec:approximation} and the statistical error of the \MC\, approximation.
\begin{assumption}\label{ass:functional}~
	\begin{enumerate}[1.)]
		\item 
		Let $\gt\in [0,1]$, let $\Psi:H^\gt(\cD)\to \bR$ be 
                Fréchet-differentiable on $H^\gt(\cD)$
                and denote by 
		\begin{equation*}
			\Psi':H^\gt(\cD)\to \cL(H^\gt(\cD); \bR)=(H^{\gt}(\cD))'
		\end{equation*}
		the Fréchet-derivative of $\Psi$. 
		There are constants $C>0$, $\rho_1,\rho_2\ge 0$ such that for all $v\in H^\gt(\cD)$  
		\begin{equation}\label{eq:functionalgrowth}
			|\Psi(v)|\le C(1+\|v\|_{H^\gt(\cD)}^{\rho_1}), \quad
			\|\Psi'(v)\|_{\cL(H^\gt(\cD); \bR)}\le C(1+\|v\|_{H^\gt(\cD)}^{\rho_2}).
		\end{equation}
		\item \label{item:Lqregularity} For $q:=2\max(\rho_1,\rho_2+1)$, there holds $u\in L^{q}(\gO;V)$.
		\item $(\cK_h)_{h\in\mfH}$ is a collection of triangulations
                      satisfying Assumption~\ref{ass:triangulation}. 
		\item \label{item:error}
                There are constants $t>0$, $r\in(0,1]$ and $C>0$ 
                such that for $q=2\max(\rho_1,\rho_2+1)$ and any $N\in\bN$ and $h\in\mfH$ 
                it holds 
		\begin{align*}
			\|u-u_{N,h}\|_{L^q(\gO; V)}\le C(2^{-tN}+h^{r}), & \quad 
			\|u-u_{N,h}\|_{L^q(\gO; H)}\le C(2^{-tN}+h^{2r}). 
		\end{align*}
	\end{enumerate}
\end{assumption}

\begin{rem}
	Assumption~\ref{ass:functional} is natural, and includes in particular bounded linear functions $\Psi$, where $\rho_1=1$ and $\rho_2=0$. Item~\ref{item:Lqregularity} follows by Theorem~\ref{thm:solution-regularity} and Item~\ref{item:error} by Corollary~\ref{cor:fullerror}, with no further restrictions whenever $p>1$. 
	Only in case that $p=1$, $\gk>0$ needs to be sufficiently small to ensure that 
        all bounds hold for $q=2\max(\rho_1,\rho_2 + 1)\ge 2$.
\end{rem}
\subsection{Singlelevel \MC}\label{subsec:slmc}
We use Monte Carlo (MC) methods to approximate $\bE(\Psi(u))$ for a given functional $\Psi$.
To this end, we first consider the standard MC estimator for (general) real-valued random variables. 
\begin{defi}
	Let $Y:\gO\to\bR$ be a random variable and let $(Y^{(i)},i\in\bN)$ be a sequence of i.i.d. copies of $Y$. For $M\in\bN$ we define \emph{Monte Carlo estimator} $E_M(Y):\gO\to\bR$ as
	\begin{equation}\label{eq:mcestimator}
		E_M(Y):=\frac{1}{M}\sum_{i=1}^M Y^{(i)}.
	\end{equation}
\end{defi}
As we are not able to sample directly from the distribution of $u$, 
we rely on i.i.d. copies $(u^{(i)}_{N,h}, i\in\bN)$  
of the pathwise approximation $u_{N,h}$ from Section~\ref{sec:approximation}. 
Thereby, 
in addition to the statistical MC error of order $\cO(M^{-1/2})$, 
we introduce a sampling bias that depends on $N$ and $h$. 
\begin{thm}\label{thm:mcerror}
Let $M\in\bN$, let $E_M(\Psi(u_{N,h}))$ be the MC estimator as in~\eqref{eq:mcestimator}, 
and let Assumption~\ref{ass:functional} hold.
Then, there is a constant $C>0$, such that for any $M, N\in\bN$ and $h\in\mfH$ it holds 
\begin{equation*}
	\|\bE(\Psi(u))-E_M(\Psi(u_{N,h}))\|_{L^2(\gO)} \le C\left(M^{-1/2} + 2^{-tN} + h^{(2-\gt)r} \right)
\end{equation*}
	
\end{thm}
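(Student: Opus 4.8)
The plan is to split the quantity $\bE(\Psi(u))-E_M(\Psi(u_{N,h}))$ into a deterministic \emph{bias} part $\bE(\Psi(u))-\bE(\Psi(u_{N,h}))$ and a \emph{statistical} part $\bE(\Psi(u_{N,h}))-E_M(\Psi(u_{N,h}))$, bound each in $L^2(\gO)$ separately, and recombine by the triangle inequality. For the statistical part, I would use that $E_M(\Psi(u_{N,h}))$ is an unbiased estimator built from i.i.d.\ copies, so that $\|\bE(\Psi(u_{N,h}))-E_M(\Psi(u_{N,h}))\|_{L^2(\gO)}^2 = M^{-1}\var(\Psi(u_{N,h})) \le M^{-1}\bE(\Psi(u_{N,h})^2)$. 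The growth bound~\eqref{eq:functionalgrowth} gives $\Psi(u_{N,h})^2 \le C(1+\|u_{N,h}\|_{H^\gt(\cD)}^{2\rho_1})$, and since $V\hookrightarrow H^\gt(\cD)$ for $\gt\in[0,1]$ (by $H^1(\cD)\hookrightarrow H^\gt(\cD)$ together with Poincar\'e's inequality) this is at most $C(1+\|u_{N,h}\|_V^{2\rho_1})$. Writing $u_{N,h}=u-(u-u_{N,h})$ and using $\|u\|_{L^q(\gO;V)}<\infty$ from Assumption~\ref{ass:functional} together with the uniform-in-$(N,h)$ bound $\|u-u_{N,h}\|_{L^q(\gO;V)}\le C$ furnished by item~\ref{item:error} of Assumption~\ref{ass:functional} (both available since $q=2\max(\rho_1,\rho_2+1)\ge 2\rho_1$), one gets $\bE(\Psi(u_{N,h})^2)\le C$ uniformly in $N,h$, hence the $CM^{-1/2}$ contribution.

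For the bias, which is a constant and whose $L^2(\gO)$-norm is just its modulus, I would exploit Fr\'echet differentiability of $\Psi$ on $H^\gt(\cD)$ to write $\Psi(u)-\Psi(u_{N,h})=\int_0^1 \Psi'\big((1-\tau)u_{N,h}+\tau u\big)(u-u_{N,h})\,d\tau$, and estimate, using the growth bound on $\Psi'$ and convexity of the argument, $|\Psi(u)-\Psi(u_{N,h})|\le C\big(1+\|u\|_{H^\gt(\cD)}^{\rho_2}+\|u_{N,h}\|_{H^\gt(\cD)}^{\rho_2}\big)\|u-u_{N,h}\|_{H^\gt(\cD)}$. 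Taking expectations and applying the Cauchy--Schwarz inequality isolates a moment factor, bounded uniformly in $N,h$ exactly as above (now using $q\ge 2\rho_2$), times the error factor $\|u-u_{N,h}\|_{L^2(\gO;H^\gt(\cD))}$. The crucial point is to split $u-u_{N,h}=(u-u_N)+(u_N-u_{N,h})$ \emph{before} interpolating: for the truncation part $\|u-u_N\|_{H^\gt(\cD)}\le C\|u-u_N\|_V$, so Theorem~\ref{thm:u-truncation} gives $\|u-u_N\|_{L^2(\gO;H^\gt(\cD))}\le C 2^{-tN}$, while for the finite element part the interpolation inequality $\|v\|_{H^\gt(\cD)}\le C\|v\|_H^{1-\gt}\|v\|_V^{\gt}$ followed by H\"older's inequality in $\gO$ with exponents $1/(1-\gt)$ and $1/\gt$ yields $\|u_N-u_{N,h}\|_{L^2(\gO;H^\gt(\cD))}\le C\|u_N-u_{N,h}\|_{L^2(\gO;H)}^{1-\gt}\|u_N-u_{N,h}\|_{L^2(\gO;V)}^{\gt}$, which Theorems~\ref{thm:L2error} and~\ref{thm:H1error} bound by $C(h^{2r})^{1-\gt}(h^{r})^{\gt}=C h^{(2-\gt)r}$ (the endpoints $\gt=0,1$ being immediate from Theorems~\ref{thm:L2error},~\ref{thm:H1error} directly). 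Hence the bias is $\le C(2^{-tN}+h^{(2-\gt)r})$ and the triangle inequality yields the claimed estimate.

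I expect the main obstacle to be precisely this bookkeeping that produces the sharp exponent $(2-\gt)r$: interpolating the \emph{full} error $u-u_{N,h}$ would blend the truncation scale $2^{-tN}$ with the mesh scale $h^r$ and corrupt the rate, so one must first absorb the truncation error into the $V$-norm (it is already $2^{-tN}$ in both $H$ and $V$) and apply interpolation only to $u_N-u_{N,h}$, whose $H$- and $V$-errors genuinely scale as $h^{2r}$ and $h^{r}$. The remaining, essentially routine, difficulty is checking that every $L^q(\gO)$-moment of $u$, of $u_{N,h}$, and hence of $\Psi(u_{N,h})$ and of $\Psi'$ evaluated along the segment, is finite and bounded uniformly in $N$ and $h$ --- which is exactly why Assumption~\ref{ass:functional} fixes $q=2\max(\rho_1,\rho_2+1)$.
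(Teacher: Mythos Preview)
Your argument is correct, and its overall architecture matches the paper's: a bias/statistical split, the variance bound $\var/M$ for the statistical part, and the mean-value representation $\Psi(u)-\Psi(u_{N,h})=\int_0^1 \Psi'(\cdots)(u-u_{N,h})\,d\tau$ together with the growth bound on $\Psi'$ for the bias. Two differences deserve comment. First, the paper inserts $E_M(\Psi(u))$ rather than $\bE(\Psi(u_{N,h}))$ as the intermediate term; this puts the variance on $\Psi(u)$ (so no uniformity in $N,h$ is needed there) and bounds the remainder by $\|\Psi(u)-\Psi(u_{N,h})\|_{L^2(\gO)}$, then applies H\"older with exponents $\tfrac{\rho_2+1}{\rho_2}$ and $\rho_2+1$ instead of your Cauchy--Schwarz---which is why $q=2(\rho_2+1)$, not merely $q\ge 2\rho_2$, appears in Assumption~\ref{ass:functional}. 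Second, and more substantively, the paper interpolates the \emph{full} error $u-u_{N,h}$ between $H$ and $V$, obtaining $(2^{-tN}+h^{2r})^{1-\gt}(2^{-tN}+h^r)^{\gt}$ and asserting this is $\le C(2^{-tN}+h^{(2-\gt)r})$. Your caution about this step is well placed: for $\gt\in(0,1)$ and $2^{-tN}\sim h^{(2-\gt)r}$ that product behaves like $h^{(2-2\gt+\gt^2)r}$, which exceeds $h^{(2-\gt)r}$ by the unbounded factor $h^{-\gt(1-\gt)r}$. Your remedy---split $u-u_{N,h}=(u-u_N)+(u_N-u_{N,h})$, bound the first piece directly in $V$ via Theorem~\ref{thm:u-truncation}, and interpolate only the second via Theorems~\ref{thm:H1error} and~\ref{thm:L2error}---cleanly delivers $C(2^{-tN}+h^{(2-\gt)r})$. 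The only caveat is that this repair invokes the separate estimates for $u-u_N$ and $u_N-u_{N,h}$ rather than the aggregated bound recorded in item~\ref{item:error} of Assumption~\ref{ass:functional}; since those theorems are available in the present setting, this is harmless.
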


\begin{proof}
	We split the overall error via 
	\begin{align*}
		\|\bE(\Psi(u))-E_M(\Psi(u_{N,h}))\|_{L^2(\gO)} 
		&\le 
		\|\bE(\Psi(u))-E_M(\Psi(u))\|_{L^2(\gO)} 
		\\
		&\quad+\|E_M(\Psi(u))-E_M(\Psi(u_{N,h}))\|_{L^2(\gO)} 
		\\
		&:= I+II.
	\end{align*}
	To bound $I$, we use independence of $\Psi(u)^{(i)}$ and $\Psi(u)^{(j)}$ for $i\neq j$ to see that 
	\begin{align*}
		I^2 
		&= 
		\bE\left(\left(\bE(\Psi(u)) - \frac{1}{M}\sum_{i=1}^M \Psi(u)^{(i)}\right)^2\right) \\
		&=
		\bE(\Psi(u))^2
		-\frac{2}{M} \sum_{i=1}^M\bE(\Psi(u))^2 
		+\frac{1}{M^2}\sum_{i,j=1}^M \bE\left(\Psi(u)^{(i)}\Psi(u)^{(j)}\right)\\
		&
		=\frac{\var(\Psi(u))}{M}.
	\end{align*}	
	Assumption~\ref{ass:functional} further shows that 	 
	\begin{equation*}
		I\le \frac{\|\Psi(u)\|_{L^2(\gO)}}{M^{1/2}}  
		\le C\frac{1+\|u\|_{L^{2\rho_1}(\gO; H^\theta(\cD))}}{M^{1/2}} 
		\le C\frac{1+\|u\|_{L^{2\rho_1}(\gO; V)}}{M^{1/2}} 
		\le CM^{-1/2}, 
	\end{equation*}
	where we have used that $\gt \le 1$ and $u\in L^{2\rho_1}(\gO; V)$.

	To bound $II$, we use Equation~\eqref{eq:functionalgrowth} and derive the pathwise estimate
	\begin{equation}\label{eq:Psierror}
		\begin{split}
			|\Psi(u)-\Psi(u_{N,h})|
			&= 
			\left|\int_0^1 \Psi'(u+z(u_{N,h}-u))(u-u_{N,h})dz\right| \\
			&\le
			\int_0^1 \|\Psi'(u+z(u_{N,h}-u))\|_{\cL(H^\gt(\cD);\bR)}\|u-u_{N,h}\|_{H^\gt(\cD)}dz \\
			&\le 
			C\left(1+\|u\|_{H^\gt(\cD)}^{\rho_2}+\|u-u_{N,h}\|_{H^\gt(\cD)}^{\rho_2}\right)\|u-u_{N,h}\|_{H^\gt(\cD)} \\
			&\le 
			C\left(1+\|u\|_{H^\gt(\cD)}^{\rho_2}+\|u_{N,h}\|_{H^\gt(\cD)}^{\rho_2}\right)
			\|u-u_{N,h}\|_{H^\gt(\cD)}.
		\end{split}
	\end{equation}
	By Assumption~\ref{ass:functional}, 
        there is a $C>0$ such that for every $N$ and every $0<h\leq 1$ it holds
	\begin{equation}\label{eq:Lqbound1}
		\|u_{N,h}\|_{L^{2(\rho_2+1)}(\gO;H^\gt(\cD))}
		\le C\|u\|_{L^{2(\rho_2+1)}(\gO;H^\gt(\cD))}
		\le C \|u\|_{L^{2(\rho_2+1)}(\gO;V)}
		<\infty.
	\end{equation}
	Furthermore, as $\gt\in[0,1]$, 
        we have by the Gagliardo-Nirenberg interpolation inequality 
	\begin{equation}\label{eq:Lqbound2}
		\|u-u_{N,h}\|_{L^{2(\rho_2+1)}(\gO;H^\gt(\cD))}
		\le 
		\|u-u_{N,h}\|_{L^{2(\rho_2+1)}(\gO;H)}^{1-\gt}
		\|u-u_{N,h}\|_{L^{2(\rho_2+1)}(\gO;V)}^{\gt}
		\le 
		C (2^{-tN}+h^{(2-\gt)r}).
	\end{equation} 
	Thus, Hölder's inequality with conjugate exponents 
        $q_1=\frac{\rho_2+1}{\rho_2}$, $q_2=\rho_2+1$ (and $q_1=\infty$, $q_2=1$ for $\rho_2=0$) 
        shows that
	\begin{equation}\label{eq:mcdiscretebias}
		\begin{split}
			II
			&\le 
			\|\Psi(u)-\Psi(u_{N,h})\|_{L^2(\gO)} \\
			&\stackrel{\eqref{eq:Psierror}}{\le} 		
			C(1+\|u\|_{L^{q_12\rho_2}(\gO;H^\gt(\cD))}+\|u_{N,h}\|_{L^{q_12\rho_2}(\gO;H^\gt(\cD))})
			\|u-u_{N,h}\|_{L^{q_22}(\gO;H^\gt(\cD))} \\
			&\stackrel{\eqref{eq:Lqbound1}}{\le} 		
			C(1+\|u\|_{L^{2(\rho_2+1)}(\gO;V)})
			\|u-u_{N,h}\|_{L^{2(\rho_2+1)}(\gO;H^\gt(\cD))}\\
			&\stackrel{\eqref{eq:Lqbound2}}{\le} 
			C (2^{-tN}+h^{(2-\gt)r}).
		\end{split}
	\end{equation} 
\end{proof}
The error contributions in Theorem~\ref{thm:mcerror} are balanced by choosing
\begin{equation}\label{eq:balancederror}
	M\approx 2^{2tN} \approx h^{-2(2-\gt)r}.
\end{equation}
With this choice, achieving the target accuracy $\|\bE(\Psi(u))-E_M(\Psi(u_{N,h}))\|_{L^2(\gO)} =\cO(\eps)$
requires sampling $M=\cO(\eps^{-2})$ high-fidelity approximations $u_{N,h}$ 
with $N=\cO(\frac{\log(\eps)}{t})$ scales and mesh refinement $h=\cO(\eps^{\frac{1}{(2-\gt)r}})$.
This is computationally challenging in dimension $d\ge 2$ and for low-regularity problems, i.e., when $t,r>0$ are small.
To alleviate the computational burden, we propose a \MLMC\, extension of the estimator $E_M$ in the next subsection.
\subsection{Multilevel \MC}\label{subsec:mlmc}
The \MLMC\,(MLMC) algorithm was invented by Heinrich~\cite{heinrich2001multilevel} to compute parametric integrals, then rediscovered and popularized by Giles~\cite{giles2008multilevel, giles2015multilevel}, and has since then found various applications in uncertainty quantification and beyond.

To apply this methodology to our model problem we fix a maximum refinement level $L\in\bN$ 
and consider a \emph{sequence of approximated solutions} $u_{N_\ell, h_\ell}$ with $(N_\ell, h_\ell)\in\bN\times \mfH$ for $\ell\in\{1,\dots,L\}$.
We assume that $N_1<\dots<N_L$ and $h_1>\dots>h_L$, 
so that the error $u-u_{N_\ell,h_\ell}$ decreases with respect to the level $\ell$. 
For notational convenience, we define $\Psi_\ell:=\Psi(u_{N_\ell,h_\ell})$ as the 
approximation of the quantity of interest $\Psi(u)$ on level $\ell$, and set $\Psi_0:=0$. 
The basic idea of the MLMC method for estimating $\bE(\Psi(u))$ is to exploit the telescopic expansion 
\begin{equation}
	\bE(\Psi(u))\approx\bE(\Psi_L)=\bE(\Psi_L)-\bE(\Psi_0)=\sum_{\ell=1}^L \bE(\Psi_\ell-\Psi_{\ell-1})
\end{equation}
of the high-fidelity approximation $\Psi_L$. 
On each level $\ell$, the correction $\bE(\Psi_\ell-\Psi_{\ell-1})$ is estimated by (standard) MC estimator with $M_\ell$ samples. 
This yields the \emph{\MLMC\, estimator} 
\begin{equation}\label{eq:mlmcestimator}
	E^L(\Psi_L):=\sum_{\ell=1}^L E_{M_\ell}(\Psi_\ell-\Psi_{\ell-1}),
\end{equation} 
with level-dependent numbers of samples $M_1,\dots,M_L\in\bN$. 
We assume that the estimators $E_{M_\ell}(\Psi_\ell-\Psi_{\ell-1})$ are \emph{jointly independent across the levels} $\ell=1,\dots,L$.

Provided that $\var(\Psi_\ell-\Psi_{\ell-1})$ decays sufficiently fast in $\ell$, 
we choose $M_1>\dots>M_L$ such that the majority of samples are generated cheaply on low levels $\ell$, 
while only a few expensive samples for large $\ell$ are necessary. 
This entails massive computational savings compared to a \emph{singlelevel \MC\,}(SLMC) estimator as in~\eqref{eq:mcestimator}, 
that requires a large number of expensive samples on level $L$, and does not exploit the level hierarchy whatsoever. 
The computational gain of the MLMC method is precisely quantified under certain assumptions in 
Giles' complexity theorem (\cite[Theorem 3.1]{giles2008multilevel}). 
Given some $\eps>0$, 
Giles derives the optimal number of refinement levels $L$ and associated numbers of samples 
$M_1,\dots, M_L$ that guarantee $\|\bE(\Psi(u))-E^L(\Psi(u))\|_{L^2(\gO)}\le \eps$. 
The latter requires exact knowledge of all constants in Assumption~\ref{ass:functional}, and, furthermore, 
exact knowledge of the cost for sampling one instance of $\Psi_\ell$.
As this is not feasible a-priori, 
we choose a slightly different approach to determine the MLMC parameters.
We retain the optimal order of complexity as in \cite[Theorem 3.1]{giles2008multilevel}. 
\begin{assumption}\label{ass:refinement}
	Let $(\cK_h)_{h\in\mfH}$ be a sequence of triangulations
satisfying Assumption~\ref{ass:triangulation}, and 
assume that $h_\ell\in\mfH$ for any $\ell\in\bN$. 
Furthermore, in view of the multilevel convergence analysis, 
we assume that there are $0<\ul c_\cK\le \ol c_\cK<1$ and $h_0>0$ 
such that
	\begin{equation}
		\label{eq:refinement-bounds}
		\ul c_\cK^\ell h_0\le h_\ell \le \ol c_\cK^\ell h_0,\quad \ell\in\bN. 
	\end{equation} 
One sample of
$\Psi_\ell=\Psi(u_{N_\ell,h_\ell})$ with $u_{N_\ell,h_\ell}\in V_{h_\ell}$ 
and $n_\ell:=\dim(V_{h_\ell})=\cO(h_\ell^{-d})$ is realized in $\cO(n_\ell)$ work and memory. 
\end{assumption}

\begin{rem}
Assumption~\ref{ass:refinement} is natural and holds, for instance, with 
$\ul c_\cK, \ol c_\cK \approx \frac{1}{2}$ if the mesh $\cK_{h_\ell}$ 
is obtained from $\cK_{h_{\ell-1}}$ by bisection of the longest edge of each $K\in\cK_{h_{\ell-1}}$.
We remark that in general, it may be hard to achieve $\ul c_\cK = \ol c_\cK$, 
which is why we imposed an upper and lower bound in~\eqref{eq:refinement-bounds}.
Simulating $\Psi_\ell$ requires $\cO(n_\ell)=\cO(h_\ell^{-d})$ 
floating point operations per sample when using multilevel solvers for continuous piecewise linear or multi-linear elements.
We also refer to Lemma~\ref{lem:cost} in Appendix~\ref{appendix:coefficient}, 
where we show that the expected cost of sampling $b_{N,T}$ on the associated grid is of order 
$\cO(h_\ell^{-d})$ if $\gb<1$.
\end{rem}

\begin{thm}\label{thm:mlmc}
	Let Assumptions~\ref{ass:functional} and~\ref{ass:refinement} hold.
	For $t, r$ and $\gt$ as in Assumption~\ref{ass:functional}, let \rev{$\eps\in(0, h_0^{(2-\gt)r})$ and}
        select the MLMC parameters in~\eqref{eq:mlmcestimator} for $\ell\in\{1,\dots,L\}$ as
	\begin{equation}\label{eq:mlmcparameters}
		L:=\left\lceil 
		\frac{\log(\eps)}{(2-\gt)r\log(\ul c_\cK)} -\frac{\log(h_0)}{\log(\ul c_\cK)}
		\right\rceil,\quad 
		M_\ell:=\left\lceil \left(\frac{h_\ell}{h_L}\right)^{2(2-\gt)r}w_\ell \right\rceil,\quad 
		N_\ell:=\left\lceil - \frac{\log(h_\ell)(2-\gt)r}{\log(2)t} \right\rceil.
	\end{equation}
	For given $\rev{L\in\bN}$, choose the weights $w_\ell>0$ to determine $M_\ell$ 
        such that $\sum_{\ell=1}^L w_\ell^{-1} \le C_w < \infty$, for sufficiently large, fixed $C_w > 0$ independent of $L$.
	 
	Then, there is a $C>0$,
        such that for any \rev{$\eps\in(0, h_0^{(2-\gt)r})$} it holds
	\begin{equation*}
		\|\bE(\Psi(u))-E^L(\Psi_L)\|_{L^2(\gO)} \le C\eps.
	\end{equation*}
\end{thm}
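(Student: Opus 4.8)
The plan is a bias--variance decomposition in the spirit of Giles' complexity theorem, controlling the bias as in Theorem~\ref{thm:mcerror} and the variance through the level-wise consistency bounds collected in Assumption~\ref{ass:functional}.

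First, since each MC estimator $E_{M_\ell}$ is unbiased, $\bE(E^L(\Psi_L)) = \sum_{\ell=1}^L \bE(\Psi_\ell - \Psi_{\ell-1}) = \bE(\Psi_L)$ (recall $\Psi_0 := 0$), so expanding the square yields
\begin{equation*}
	\|\bE(\Psi(u)) - E^L(\Psi_L)\|_{L^2(\gO)}^2
	=
	|\bE(\Psi(u)) - \bE(\Psi_L)|^2 + \var(E^L(\Psi_L)).
\end{equation*}
For the bias I would bound $|\bE(\Psi(u)) - \bE(\Psi_L)| \le \|\Psi(u) - \Psi_L\|_{L^1(\gO)} \le \|\Psi(u) - \Psi_L\|_{L^2(\gO)}$ and invoke~\eqref{eq:mcdiscretebias} (the estimate of the term $II$ in the proof of Theorem~\ref{thm:mcerror}) with $N = N_L$, $h = h_L$, giving $\|\Psi(u) - \Psi_L\|_{L^2(\gO)} \le C(2^{-tN_L} + h_L^{(2-\gt)r})$. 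The choice of $N_\ell$ in~\eqref{eq:mlmcparameters} is designed so that $2^{-tN_\ell} \le h_\ell^{(2-\gt)r}$, and the choice of $L$ in~\eqref{eq:mlmcparameters} combined with the mesh bounds~\eqref{eq:refinement-bounds} gives $h_L^{(2-\gt)r} \le C\eps$; hence the bias is $\le C\eps$.

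For the variance, joint independence of the level estimators yields $\var(E^L(\Psi_L)) = \sum_{\ell=1}^L M_\ell^{-1}\var(\Psi_\ell - \Psi_{\ell-1})$. For $\ell \ge 2$ I would estimate
\begin{equation*}
	\var(\Psi_\ell - \Psi_{\ell-1})
	\le
	\|\Psi_\ell - \Psi_{\ell-1}\|_{L^2(\gO)}^2
	\le
	2\|\Psi(u) - \Psi_\ell\|_{L^2(\gO)}^2 + 2\|\Psi(u) - \Psi_{\ell-1}\|_{L^2(\gO)}^2
	\le
	C h_{\ell-1}^{2(2-\gt)r},
\end{equation*}
again via~\eqref{eq:mcdiscretebias} (the $N_\ell$ choice absorbing the $2^{-tN_\ell}$ contribution, and $h_\ell \le h_{\ell-1}$), while for $\ell = 1$ the growth condition~\eqref{eq:functionalgrowth} with $u \in L^{2\rho_1}(\gO; V)$ (Assumption~\ref{ass:functional}, item~\ref{item:Lqregularity}) gives $\var(\Psi_1) \le \|\Psi_1\|_{L^2(\gO)}^2 \le C$. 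Inserting $M_\ell \ge (h_\ell/h_L)^{2(2-\gt)r} w_\ell$ and using~\eqref{eq:refinement-bounds} I obtain
\begin{equation*}
	\var(E^L(\Psi_L))
	\le
	C \sum_{\ell=1}^L \frac{h_{\ell-1}^{2(2-\gt)r}}{M_\ell}
	\le
	C\, h_L^{2(2-\gt)r} \sum_{\ell=1}^L \frac{1}{w_\ell}\left(\frac{h_{\ell-1}}{h_\ell}\right)^{2(2-\gt)r}
	\le
	C \eps^2,
\end{equation*}
where the last step uses the mesh bounds~\eqref{eq:refinement-bounds} to control the level ratios $h_{\ell-1}/h_\ell$ (folding them, together with $\sum_\ell w_\ell^{-1} \le C_w$, into an $L$-independent constant) and then $h_L^{(2-\gt)r} \le C\eps$. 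Combining the bias and variance bounds proves the claim.

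The step I expect to be most delicate is the last display: one must verify that every geometric series that arises --- those over scales $j$ inside the consistency bounds of Section~\ref{sec:approximation}, and the one over levels $\ell$ weighted by $w_\ell^{-1}$ --- converges with constants independent of $L$, equivalently of $\eps$, and that the parameter formulas~\eqref{eq:mlmcparameters} (phrased via $\ul c_\cK$) interact correctly with the two-sided bound $\ul c_\cK^\ell h_0 \le h_\ell \le \ol c_\cK^\ell h_0$, so that simultaneously $h_L^{(2-\gt)r} \lesssim \eps$ and the ratios $h_{\ell-1}/h_\ell$ do not spoil summability after division by $w_\ell$. Beyond this bookkeeping the argument is self-contained: the statistical and discretization contributions decouple cleanly in the bias--variance split, and all analytic input is already available from Theorems~\ref{thm:u-truncation}, \ref{thm:H1error}, \ref{thm:L2error}, \ref{thm:mcerror} (as aggregated in Assumption~\ref{ass:functional}).
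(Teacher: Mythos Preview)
Your proposal is correct and follows essentially the same route as the paper: a bias--variance split (the paper uses the triangle inequality instead of the exact orthogonal decomposition, which is equivalent here since $E^L$ is unbiased for $\bE(\Psi_L)$), the bias bounded via~\eqref{eq:mcdiscretebias}, and the variance controlled level-by-level using independence, the triangle inequality through $\Psi(u)$, and the choices~\eqref{eq:mlmcparameters}. The delicate bookkeeping you flag---the interaction between the two-sided bound~\eqref{eq:refinement-bounds} and the ratios $h_{\ell-1}/h_\ell$, and the passage from $h_L^{(2-\gt)r}$ to $\eps$ via $\ul c_\cK$ versus $\ol c_\cK$---is handled by the paper in exactly the same way and with the same level of detail.
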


\begin{proof}
	We use the error splitting 
	\begin{align*}
		\|\bE(\Psi(u))-E^L(\Psi_L)\|_{L^2(\gO)} 
		\le 
		\|\bE(\Psi(u))-\bE(\Psi_L))\|_{L^2(\gO)} 
		+\|\bE(\Psi_L)-E^L(\Psi_L)\|_{L^2(\gO)}.
	\end{align*}
	We obtain in the same fashion as for the term $II$ in the proof of Theorem~\ref{thm:mcerror} that
	\begin{align*}
		\|\bE(\Psi(u))-\bE(\Psi_L)\|_{L^2(\gO)} 
		\le
		\|\Psi(u)-\Psi(u_{N_L,h_L})\|_{L^2(\gO)}
		\le C (2^{-tN_L}+h_L^{(2-\gt)r})
		\le C h_L^{(2-\gt)r}, 
	\end{align*}
	where we have used $2^{-tN_L}\le h_L^{(2-\gt)r}$ by~\eqref{eq:mlmcparameters} in the last step.
	To bound the second term, we expand $\bE(\Psi_L)$ in a telescopic sum to obtain with~\eqref{eq:mlmcestimator} 
	\begin{align*}
		\|\bE(\Psi_L)-E^L(\Psi_L)\|_{L^2(\gO)}^2
		&=
		\sum_{\ell=1}^L 
		\|\bE(\Psi_\ell-\Psi_{\ell-1})-E_{M_\ell}(\Psi_\ell-\Psi_{\ell-1})\|_{L^2(\gO)}^2 
                \\
		&=
		\sum_{\ell=1}^L M_\ell^{-1} \|\Psi_\ell-\Psi_{\ell-1}\|_{L^2(\gO)}^2.
	\end{align*}
	The first equality holds since the MC estimators $E_{M_1}(\Psi_1), \dots, E_{M_L}(\Psi_L-\Psi_{L-1})$ 
        are jointly independent and unbiased in the sense that 
        $\bE(\Psi_\ell-\Psi_{\ell-1})=\bE(E_{M_\ell}(\Psi_\ell-\Psi_{\ell-1}))$.
	The triangle inequality and the estimate~\eqref{eq:mcdiscretebias} (from the proof of Theorem~\ref{thm:mcerror}) 
        then further yield
	\begin{align*}
		\|\bE(\Psi_L)-E^L(\Psi_L)\|_{L^2(\gO)}^2
		&\le 
		2\sum_{\ell=1}^L M_\ell^{-1}\left[ \|\Psi_\ell-\Psi(u)\|_{L^2(\gO)}^2+\|\Psi(u)-\Psi_{\ell-1}\|_{L^2(\gO)}^2\right]
                \\
		&\le 
		C\sum_{\ell=1}^L M_\ell^{-1}\left[ 2^{-2tN_\ell}+h_\ell^{2(2-\gt)r}+2^{-2tN_{\ell-1}}+h_{\ell-1}^{2(2-\gt)r}\right]
                \\
		&\le C\sum_{\ell=1}^L w_\ell^{-1} h_L^{2(2-\gt)r} ,
	\end{align*}
	where we have used Assumption~\ref{ass:refinement} and 
        the choices for $M_\ell$ and $N_\ell$ in~\eqref{eq:mlmcparameters} in the last step.
	As $\sum_{\ell=1}^L w_\ell^{-1}\le C_w<\infty$ is bounded independently of $L$, 
	we conclude with~\eqref{eq:refinement-bounds}, 
        $L$ as in \eqref{eq:mlmcparameters}, and $0<\ul c_\cK\le \ol c_\cK<1$ 
        that
		\begin{equation*}
			\|\bE(\Psi(u))-E^L(\Psi_L)\|_{L^2(\gO)}
			\le C h_L^{(2-\gt)r}
			\le C (\ol c_\cK^L h_0)^{(2-\gt)r}\
			\le C\eps^{\frac{\log(\ol c_\cK)}{\log(\ul c_\cK)}}
			h_0^{\left(1-\frac{\log(\ol c_\cK)}{\log(\ul c_\cK)}\right)(2-\gt)r}
	        \le C\eps.
		\end{equation*}
\end{proof}

The computational advantages of the MLMC method are precisely quantified in the next statement. Therein, the choice of $w_\ell$ plays a key role and depends on the relation of variance decay and cost of sampling on each level.

\begin{thm}\label{thm:work}
	Let Assumptions~\ref{ass:functional} and~\ref{ass:refinement} hold, and let $\eps>0$.
	Given $t, r$ and $\gt$ and $\eps>0$, set $L, M_\ell$ and $N_\ell$ as in Theorem~\ref{thm:mlmc} and choose the weight functions
	\begin{equation*}
		w_\ell = 
		\begin{cases}
			\ell^{1+\iota} \quad&\text{if $2(2-\gt)r>d$} \\
			L \quad&\text{if $2(2-\gt)r=d$} \\
			\ul c_\cK^{(2(2-\gt)r-d)\rev{(L-\ell)/2}} \quad&\text{if $2(2-\gt)r<d$} \\
		\end{cases},
	\quad \ell\in\{1,\dots, L\},
	\end{equation*}
	where \rev{$\iota>0$} is an arbitrary small constant.
	Then, the MLMC estimator satisfies
	\begin{equation*}
		\|\bE(\Psi(u))-E^L(\Psi_L)\|_{L^2(\gO)} \le C\eps,
	\end{equation*}
	with computational cost $\cC_{MLMC}$ for $\eps\to 0$ of order
	\begin{equation*}
		\cC_{MLMC} =
		\begin{cases}
			\cO(\eps^{-2}) \quad&\text{if $2(2-\gt)r>d$} \\
			\cO(\eps^{-2}\log(\eps)^2)  \quad&\text{if $2(2-\gt)r=d$} \\
			\cO(\eps^{-2-\frac{d-2(2-\gt)r}{(2-\gt)r}}) \quad&\text{if $2(2-\gt)r<d$}.
		\end{cases}
	\end{equation*}
\end{thm}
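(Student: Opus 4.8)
The plan is to treat the theorem's two assertions — the $L^2$-error bound and the work bound — separately. The error estimate $\|\bE(\Psi(u))-E^L(\Psi_L)\|_{L^2(\gO)}\le C\eps$ will follow immediately from Theorem~\ref{thm:mlmc} once we verify that each of the three prescribed weight sequences meets its hypothesis $\sum_{\ell=1}^L w_\ell^{-1}\le C_w$ with $C_w$ independent of $L$. For $2(2-\gt)r>d$ one has $w_\ell=\ell^{1+\iota}$ and $\sum_{\ell\ge1}\ell^{-(1+\iota)}<\infty$; for $2(2-\gt)r=d$ one has $w_\ell\equiv L$ and $\sum_{\ell=1}^L L^{-1}=1$; for $2(2-\gt)r<d$ one has $w_\ell^{-1}=\ul c_\cK^{(d-2(2-\gt)r)(L-\ell)/2}$, a geometric sequence in $L-\ell$ with ratio $\ul c_\cK^{(d-2(2-\gt)r)/2}\in(0,1)$, hence $\sum_{\ell=1}^L w_\ell^{-1}\le(1-\ul c_\cK^{(d-2(2-\gt)r)/2})^{-1}$ uniformly in $L$. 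With such $C_w$ available, Theorem~\ref{thm:mlmc} yields the error bound verbatim.

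For the work bound I would write the total cost as a sum over levels of (number of samples) $\times$ (cost per sample). By Assumption~\ref{ass:refinement} one realization of $\Psi_\ell-\Psi_{\ell-1}$ — which requires the FE solves on levels $\ell$ and $\ell-1$ together with the generation of $b_{T,N_\ell}$, the latter costing $\cO(h_\ell^{-d})$ by Lemma~\ref{lem:cost} for $\gb<1$ — is obtained in $\cO(n_\ell)=\cO(h_\ell^{-d})$ operations. Using $M_\ell\le(h_\ell/h_L)^{2(2-\gt)r}w_\ell+1$ from~\eqref{eq:mlmcparameters}, this gives
\[
\cC_{MLMC}=\sum_{\ell=1}^L M_\ell\,\cO(h_\ell^{-d})\le C\,h_L^{-2(2-\gt)r}\sum_{\ell=1}^L h_\ell^{2(2-\gt)r-d}w_\ell\;+\;C\sum_{\ell=1}^L h_\ell^{-d}=:C(S_1+S_2).
\]
The second sum is bounded by $S_2\le h_0^{-d}\sum_{\ell=1}^L\ul c_\cK^{-\ell d}\le C\,\ul c_\cK^{-Ld}$ (a geometric sum with ratio $\ul c_\cK^{-d}>1$, dominated by its last term). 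For $S_1$ I would insert the two-sided bounds $\ul c_\cK^\ell h_0\le h_\ell\le\ol c_\cK^\ell h_0$ and the explicit $w_\ell$ to collapse $\sum_{\ell=1}^L h_\ell^{2(2-\gt)r-d}w_\ell$ to an elementary sum in each regime: for $2(2-\gt)r>d$ the summand decays geometrically and the factor $\ell^{1+\iota}$ keeps it summable, so the sum is $\cO(1)$ and $S_1\le C\,h_L^{-2(2-\gt)r}$; for $2(2-\gt)r=d$ the summand equals $L$ and there are $L$ terms, so the sum is $\cO(L^2)$ and $S_1\le C\,L^2 h_L^{-2(2-\gt)r}$; for $2(2-\gt)r<d$ the weights are tuned so that $h_\ell^{2(2-\gt)r-d}w_\ell\le C\,\ul c_\cK^{(2(2-\gt)r-d)(L+\ell)/2}$, an increasing geometric sequence whose sum is dominated by its $\ell=L$ term, giving $S_1\le C\,h_L^{-2(2-\gt)r}\ul c_\cK^{(2(2-\gt)r-d)L}=C\,\ul c_\cK^{-dL}$.

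It then remains to convert $h_L$ and $L$ into powers of $\eps$. From the choice of $L$ in~\eqref{eq:mlmcparameters} one gets $L=\Theta(|\log\eps|)$ and $\ul c_\cK^L\ge C\eps^{1/((2-\gt)r)}$, whence by $h_L\ge\ul c_\cK^L h_0$ also $h_L^{-2(2-\gt)r}\le C\eps^{-2}$ and $\ul c_\cK^{-Ld}\le C\eps^{-d/((2-\gt)r)}$. Substituting, $S_1$ is of order $\eps^{-2}$, $\eps^{-2}|\log\eps|^2$, respectively $\eps^{-d/((2-\gt)r)}=\eps^{-2-(d-2(2-\gt)r)/((2-\gt)r)}$ in the three cases, while $S_2=\cO(\eps^{-d/((2-\gt)r)})$ is in every case dominated by (or of the same order as) $S_1$, which yields the asserted complexity. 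The main obstacle is the three-way case analysis for $S_1$, most delicately the variance-dominated regime $2(2-\gt)r<d$: there $h_\ell^{2(2-\gt)r-d}w_\ell$ is \emph{increasing} in $\ell$, the geometrically growing weights are tuned exactly to the mesh-contraction rate, and one must use only the one-sided bound $h_\ell\ge\ul c_\cK^\ell h_0$ — the direction producing an upper bound — when collapsing the sum to its top term; a secondary subtlety is the gap between $\ul c_\cK$ and $\ol c_\cK$, since all work upper bounds must be driven by $\ul c_\cK^\ell\le h_\ell/h_0$, the bound $h_\ell\le\ol c_\cK^\ell h_0$ being needed only for summability of $\sum_\ell h_\ell^{2(2-\gt)r-d}w_\ell$ when $2(2-\gt)r>d$.
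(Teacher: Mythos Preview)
Your proposal is correct and follows essentially the same route as the paper: verify the summability condition $\sum_\ell w_\ell^{-1}\le C_w$ for each weight choice to invoke Theorem~\ref{thm:mlmc}, then bound $\cC_{MLMC}=\sum_\ell M_\ell\,\cO(h_\ell^{-d})$ by collapsing the level sum to a geometric sum in $\ul c_\cK^\ell$ in each of the three regimes, and finally convert $\ul c_\cK^L$ and $L$ to powers of $\eps$ via~\eqref{eq:mlmcparameters}. Your explicit splitting off of the ceiling ``$+1$'' as a separate term $S_2$ and your careful tracking of when to use $h_\ell\le\ol c_\cK^\ell h_0$ versus $h_\ell\ge\ul c_\cK^\ell h_0$ are minor refinements over the paper's proof (which handles the ceiling implicitly via $M_\ell\le 2\,\ul c_\cK^{(\ell-L)2(2-\gt)r}w_\ell$, valid since the argument is $\ge1$), but the overall structure and the three-case analysis are the same.
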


\begin{proof}
	Since $\ul c_\cK\in(0,1)$, it holds in each scenario
	$\sum_{\ell=1}^L w_\ell^{-1}\le C$ for a constant $C>0$, and uniform with respect to $L$.
	Therefore, we conclude by Theorem~\ref{thm:mlmc} that 
	\begin{equation*}
		\|\bE(\Psi(u))-E^L(\Psi_L)\|_{L^2(\gO)} \le C\eps,
	\end{equation*}
	and it remains to derive the complexity in terms of $\eps$.
	
	Assumption~\ref{ass:refinement} implies that $h_L \le h_\ell \ul c_\cK^{(L-\ell)} h_0$. 
        We obtain with~\eqref{eq:mlmcparameters} that 
	\begin{equation}\label{eq:levelsamples}
		M_\ell=\left\lceil \ul {c_\cK}^{(\ell-L)2(2-\gt)r}w_\ell \right\rceil.
	\end{equation}
	Let $\cC_\ell$ denote the work required to generate on sample of $\Psi_\ell$.
	As $h_\ell \ge\ul c_\cK^{\ell} h_0$, it holds that 
	\begin{equation}\label{eq:levelcost}
		\cC_\ell=\cO(\dim(V_{h_\ell}))=\cO(h_\ell^{-d})\le C {\ul c_\cK}^{-d\ell}
	\end{equation}
	Since we generate $M_\ell$ independent $\Psi_\ell-\Psi_{\ell-1}$ on each level (and also generate independent samples across all levels), the overall cost of the MLMC estimator is with~\eqref{eq:levelsamples} and~\eqref{eq:levelcost} bounded by  
	\begin{align*}
		\cC_{MLMC} 
		&:= 
		\sum_{\ell=1}^L M_\ell(\cC_\ell+\cC_{\ell-1}) \\
		&\le
		C \sum_{\ell=1}^L \ul c_\cK^{(\ell-L)2(2-\gt)r}w_\ell (\ul c_\cK^{-d\ell}+\ul c_\cK^{-d(\ell-1)}) \\
		&\le  
		C \ul c_\cK^{-L2(2-\gt)r} 
		\sum_{\ell=1}^L \left(\ul c_\cK^{2(2-\gt)r-d}\right)^{\ell} w_\ell.
	\end{align*}
	Now first suppose that $2(2-\gt)r-d>0$. Since $\ul c_\cK\in(0,1)$, the ratio test for sum convergence shows that for any $\iota>0$ we obtain the uniform bound (with respect $L\in\bN$) 
	\begin{equation*}
		\sum_{\ell=1}^L \left(\ul c_\cK^{2(2-\gt)r-d}\right)^{\ell} w_\ell
		\le 
		\sum_{\ell\in\bN} \left(\ul c_\cK^{2(2-\gt)r-d}\right)^{\ell} \ell^{1+\iota} 
		<\infty.
	\end{equation*}
	On the \rev{other hand, if} $2(2-\gt)r-d=0$, there holds with $w_\ell=L$
	\begin{equation*}
		\sum_{\ell=1}^L \left(\ul c_\cK^{2(2-\gt)r-d}\right)^{\ell} w_\ell
		= L^2.
	\end{equation*}
	Finally, for $2(2-\gt)r-d<0$, it follows that there is a $C>0$ such that
	\begin{equation*}
		\sum_{\ell=1}^L \left(\ul c_\cK^{2(2-\gt)r-d}\right)^{\ell} w_\ell
		=
		\rev{
		\ul c_\cK^{L(2(2-\gt)r-d)/2}
		\sum_{\ell=1}^L \ul c_\cK^{\ell(2(2-\gt)r-d)/2}
	}
		\le C \ul c_\cK^{L(2(2-\gt)r-d)}.
	\end{equation*}
	Altogether, we obtain that there exists a constant $C>0$ independent of $L$ such that
	\begin{equation*}
		\cC_{MLMC} \le  C
		\begin{cases}
			\ul c_\cK^{-L2(2-\gt)r} \quad&\text{if $2(2-\gt)r>d$}, \\
			\ul c_\cK^{-L2(2-\gt)r} L^2 \quad&\text{if $2(2-\gt)r=d$}, \\
			\ul c_\cK^{-L2(2-\gt)r}\ul c_\cK^{L(2(2-\gt)r-d)} \quad&\text{if $2(2-\gt)r<d$}.
		\end{cases}
	\end{equation*}
	With $L$ from~\eqref{eq:mlmcparameters} it follows that 
	$\ul c_\cK^L=\cO(\eps^{\frac{1}{(2-\gt)r}}h_0^{-1})$ for $\eps\to0$. 
        This shows the following asymptotics for the $\eps$-cost bounds as $\eps\to0$
	\begin{equation*}
		\cC_{MLMC}(\eps) =
		\begin{cases}
			\cO(\eps^{-2}) \quad&\text{if $2(2-\gt)r>d$}, \\
			\cO(\eps^{-2}\log(\eps)^2)  \quad&\text{if $2(2-\gt)r=d$}, \\
			\cO(\eps^{-2-\frac{d-2(2-\gt)r}{(2-\gt)r}}) \quad&\text{if $2(2-\gt)r<d$}.
		\end{cases}
	\end{equation*}
\end{proof}

\begin{rem}
	The asymptotic complexity bounds for $\eps\to0$ are of the same magnitude as in 
\cite[Theorem 3.1]{giles2008multilevel} and \cite[Theorem 1]{cliffe2011multilevel}, 
but only require knowledge of the parameters $r,t$ and $\gt$, but not on further absolute constants.
From Theorem~\ref{thm:mcerror}, the SLMC estimator requires for given $\eps>0$ 
a total of $M\approx\eps^{-2}$ samples with refinement parameters satisfying $2^{-tN}\approx h^{(2-\gt)r}\approx\eps$.
Hence,  $h=\cO(\eps^{\frac{1}{(2-\gt)r}})$ and, 
assuming availability of a linear complexity solver such as multigrid, 
the computational cost per sample 
is bounded asymptotically as
$\cO(\dim(V_h))=\cO(h^{-d})=\cO(\eps^{-\frac{d}{(2-\gt)r}})$.  
The total cost of the SLMC estimator to achieve $\eps$-accuracy is therefore
	\begin{equation}\label{eq:slmcwork}
		\cC_{SLMC}(\eps) = \cO(\eps^{-2-\frac{d}{(2-\gt)r}})>\cC_{MLMC}(\eps),\quad 
			\text{as $\eps\to 0$}.
	\end{equation}
Consequently, under the stated assumptions,
MLMC-FEM achieves a considerable reduction in (asymptotic) $\eps$-complexity, 
even in low-regularity regimes with $2(2-\gt)r<d$.

In case that $2(2-\gt)r>d$, the assumption that 
$E_{M_1}(\Psi_1), \dots, E_{M_L}(\Psi_L-\Psi_{L-1})$ 
are independent MC estimators is not required to derive the 
optimal complexity $\cC_{MLMC}=\cO(\eps^{-2})$. 
Instead, setting $w_\ell=\ell^{2(1+\iota)}$ is sufficient to compensate for the dependence 
across discretiation levels. 
This could be exploited in a simulation 
to "recycle" samples from coarser discretization levels in order
to further increase efficiency. 
We refer, e.g., to the discussion in \cite[Section 5.2]{BS18b}.
\end{rem}

\section{Numerical Experiments}
\label{sec:numerics}
We consider numerical experiments in the rectangular domain $\cD:=\bT^2=(0,1)^2$ 
and use the constant source function $f\equiv 1$. 
For the spatial discretization we use bilinear finite elements that 
may be efficiently computed by exploiting their tensor product-structure, 
see Appendix~\ref{appendix:bilinearFE} for details. 
The initial mesh width is given by $h_0=\frac{1}{2}$ and 
we use dyadic refinements with factor $\ul c_\cK=\ol c_\cK = \frac{1}{2}$ 
to obtain a sequence of tesselations $(\cK_h,\; h=2^{-\ell} h_0,\; \ell\in\bN)$ 
that satisfies Assumption~\ref{ass:triangulation} for the MLMC algorithm. 
We further use midpoint quadrature to assemble the stiffness matrix for each realization of the diffusion coefficient. 
The resulting quadrature error does not dominate the FE error convergence from Theorems~\ref{thm:H1error} and \ref{thm:L2error}, 
as we show in Lemma~\ref{lem:quadrature} in the Appendix. 
For given $N$ and a rectangular mesh $\cK_h$, we evaluate $b_{T,N}$ at the midpoint of each $K\in\cK_h$ 
as described in Appendix~\ref{appendix:coefficient}.

We investigate different parameter regimes of varying smoothness for the diffusion coefficient, 
the values and resulting pathwise approximation rates $r$ and $t$ as in Corollary~\ref{cor:fullerror} 
are collected in Table~\ref{table:parameters}.
In all experiments, we build the random field $b_T$ resp.  $b_{T,N}$ 
based on Daubechies wavelets with five vanishing moments ("$\mathrm{DB}(5)$-wavelets"), 
with smoothness $\phi, \psi \in C^{1.177}(\bR)$ (see \cite[Section 7.1.2]{daubechies1992ten}).
We consider the $L^2(\cD)$-norm of the gradient as quantity of interest (QoI), with associated functional given by  
\begin{align*}
	&\Psi:H^1(\cD)\to [0,\infty),
	\quad u\mapsto\left(\int_\cD |\nabla u|^2 dx\right)^{1/2}, 
\end{align*}
so that Assumption~\ref{ass:functional} holds with $\theta=1$.

\begin{table}[h]
	\centering
	\begin{tabular}{ |c||c|c|c|c||c|c|| c| } 
		\hline
		Parameter values & $s$ & $p$ & $\gk$ & $\gb$ & $t$ & $\rev{s-\frac{d}{p}}$
		& MLMC complexity for $\gt=1$ \\ 
		\hline \hline
		"smooth Gaussian" & 2 & 2 & 1 & $\frac{1}{2}$ & 1 & 1 & 
		$
		\cO(\eps^{-2}|\log(\eps)|^2)
		$\\
		\hline
		"rough Gaussian" & $\frac{3}{2}$ & 2 & 1 & $\frac{1}{2}$ & $\frac{1}{2}$ & $\frac{1}{2}$ & 
		$
		\cO(\eps^{-4})
		$\\
		\hline
		"$p$-exponential" & 2 & $\frac{8}{5}$ & 1 & $\frac{3}{4}$ & $\frac{3}{4}$ & $\frac{3}{4}$ & 
		$
		\cO(\eps^{-\frac{8}{3}})
		$\\
		\hline
	\end{tabular}
	\caption{\small Parameters values for the random tree Besov priors in the numerical experiments.}
	\label{table:parameters}
\end{table}

Given $\theta$, $r$ and $t$, we prescribes target accuracies 
$\eps = 2^{-r\xi}$, $\xi\in\{5,\dots, 9\}$ 
and select, for given $\eps>0$, 
the MLMC parameters as in Theorem~\ref{thm:mlmc}.
The maximum refinement level is denoted by $L_\eps$ and the 
corresponding estimators by $E^{L_\eps}(\Psi_{L_\eps})$.
We sample $n_{ML}=2^8$ realizations of $E^{L_\eps}(\Psi_{L_\eps})$ for every $\eps$. 
As reference solution, we use $n_{ref}=2^4$ realizations of  $E^{L_{ref}}(\Psi_{L_{ref}})$ with parameters 
adjusted to achieve $\eps_{ref}:=2^{-r11}$.
We report for prescribed $\eps$ the \emph{realized empirical RMSE} 
\begin{equation*}
	\text{RMSE}(\eps)=
	\left(
	\frac{1}{n_{ML}}\sum_{i=1}^{n_{ML}} 
	\left( 
	E^{L_\eps}(\Psi_{L_\eps})(\go_i) 
	- 
	\left(
	\frac{1}{n_{ref}}\sum_{j=1}^{n_{ref}}E^{L_{ref}}(\Psi_{L_{ref}})(\go_j)
	\right)
	\right)^2
	\right)^{\frac{1}{2}}.
\end{equation*}
All computations are realized using \textsc{MATLAB} on a workstation with 
two  Intel Xeon E5-2697 CPUs with 2.7 GHz, a total of 12 cores, and 256 Gigabyte RAM.

\bigskip

We start with the "smooth Gaussian" case from Table~\ref{table:parameters}. A sample of the diffusion coefficient and the associated bilinear FE approximation of $u$ is given in Figure~\ref{fig:smoothsample}, where we also plot \rev{the (average) CPU times against the realized RMSE}. As we see, the realized error is very close to the prescribed accuracy $\eps$, which corresponds to the error estimate from Theorem~\ref{thm:mlmc}.
Moreover, the empirical results are in line with the work estimates from Theorem~\ref{thm:work}, as is seen  in the right plot of Figure~\ref{fig:smoothsample}, since the computational complexity is (asymptotically) of order $\cO(\eps^{-2}|\log(\eps)|^2)$.

Next, we decrease the parameter $s$ to obtain the "rough Gaussian" scenario from Table~\ref{table:parameters}. A sample of the diffusion coefficient and the associated bilinear FE approximation of $u$ is given in Figure~\ref{fig:roughsample}. Compared to Figure~\ref{fig:smoothsample}, we now see more detailed, sharp features in the diffusion coefficient, due to the slower decay factor of the wavelet basis.	
Average CPU times vs. realized RMSE are given in Figure~\ref{fig:roughsample}. 
Again, the realized error is of order $\cO(\eps)$, and the computational times are asymptotically of order $\cO(\eps^{-4})$, as expected from Theorem~\ref{thm:work}.

Finally, we investigate the "$p$-exponential" scenario from Table~\ref{table:parameters}, where we use a heavier-tailed distribution of $X_{j,k}^{l}$ and increase the wavelet density to $\gb=\frac{3}{4}$.
We use a standard Acceptance-Rejection algorithm to sample from the $p$-exponential density for $p=1.6$. 
A sample of the diffusion coefficient and the associated bilinear FE approximation of $u$ is given in Figure~\ref{fig:pexponentialsample}.
We observe that the variance of coefficient and solution is increased, compared to the previous two examples. This is indicated by the larger bars of the confidence intervals in the right plot of Figure~\ref{fig:pexponentialsample}. The a-priori accuracy has been scaled by a factor of three in this plot, for a better visual comparison of realized and prescribed RMSE.
Although absolute magnitude and variance of the realized RMSE have increased, we still recover in line with Theorem~\ref{thm:work} the asymptotic error decay of order $\cO(\eps)$, together with CPU times of order $\cO(\eps^{-\frac{8}{3}})$.

\rev{Our experiments show that sharper features in the prior model are achieved by decreasing either $s$ or $p$. We emphasize that the asymptotic complexity depends on the differential dimension $s-\frac{d}{p}$ as indicated in Table~\ref{table:parameters},the latter essentially corresponds to the parameter $r$ in Theorem~\ref{thm:mlmc}.  Consequently, the effect of lowering $p$ on the overall regularity is more pronounced in as the physical dimension $d$ increases.}

\begin{figure}
	\centering
	\subfigure{\includegraphics[scale = 0.32]{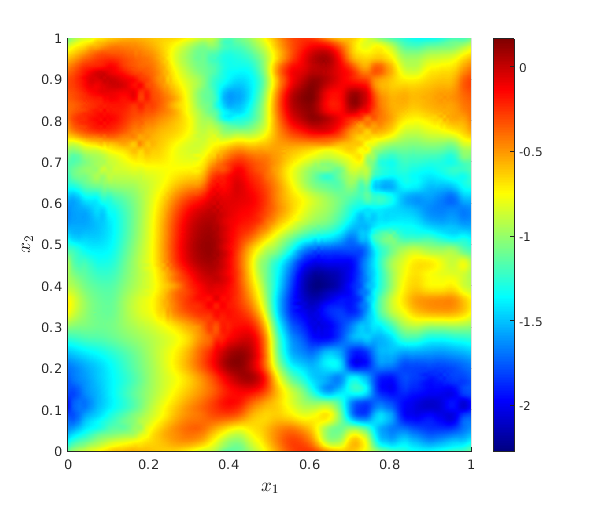}}
	\subfigure{\includegraphics[scale = 0.32]{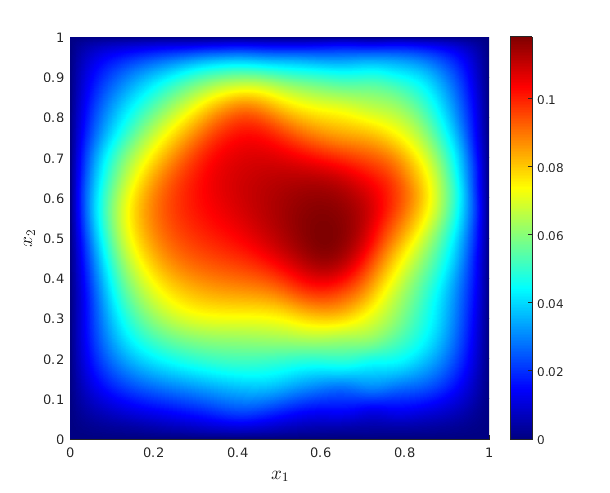}}
	\subfigure{\includegraphics[scale = 0.32]{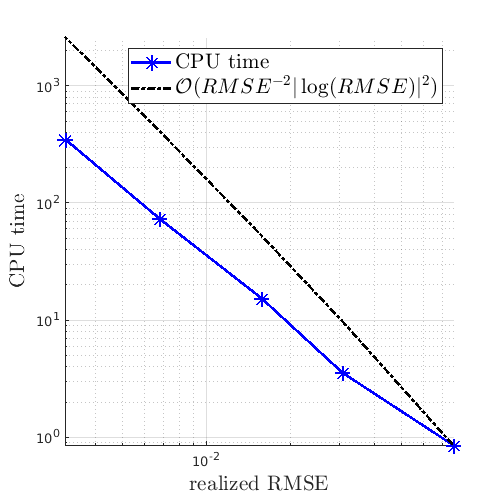}}
	\caption{Sample of a Besov random tree prior/log-diffusion coefficient $b_{T}$ (left) and the corresponding finite element approximation of $u$ (middle) for the ``smooth Gaussian'' case in Table~\ref{table:parameters}.
		Coefficient and solution in the figures have been sampled on a grid with $2^9\times 2^9$ equidistant points, 
                and wavelet series truncation was after $N=9$ scales.
		Fractal structures in the log-diffusion caused by the wavelet density $\gb=\frac{1}{2}$ are clearly visible in the left plot.
The realized RMSE (blue) resp. predicted RMSE (red) vs. computational complexity is depicted in the right plot. 
Both curves exhibit the predicted asymptotic behavior of $\cO(\eps^{-2}|\log(\eps)|^2)$, as indicated by the dashed line. 
The support line $\cO(\eps^{-2.35})$ has been added to show that the complexity is indeed non-linear in the log-scale.
The bars indicate 95\%-confidence intervals of the realized RMSE. 
}
	\label{fig:smoothsample}

	\centering
	\subfigure{\includegraphics[scale = 0.32]{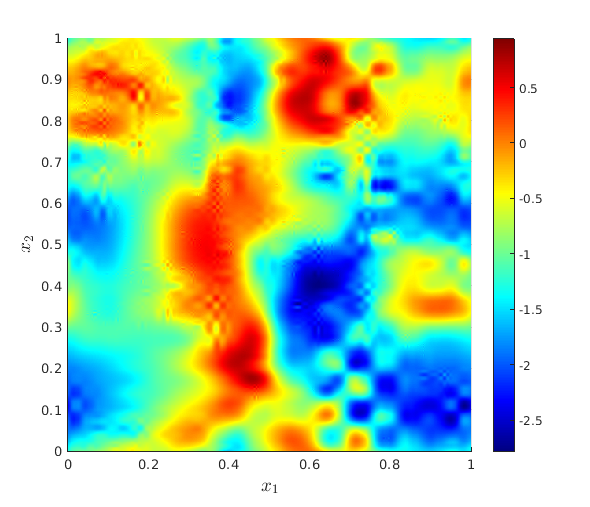}}
	\subfigure{\includegraphics[scale = 0.32]{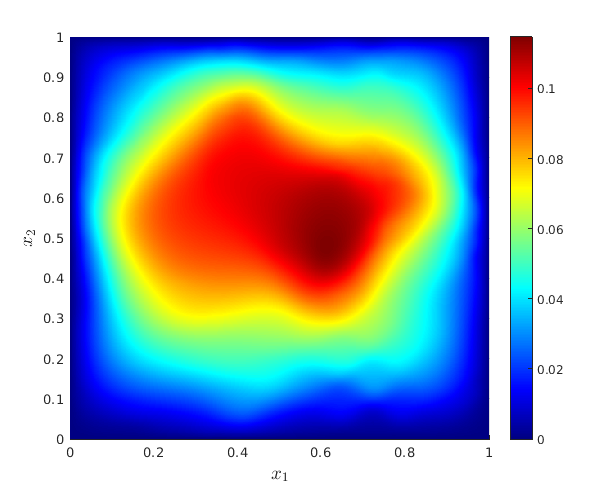}}
	\subfigure{\includegraphics[scale = 0.32]{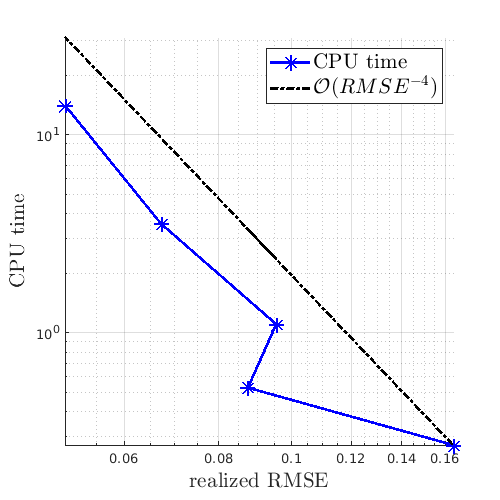}}
	\caption{Sample of a Besov random tree prior/log-diffusion coefficient $b_{T}$ (left) and the corresponding finite element approximation of $u$ (middle) for the ``rough Gaussian'' case in Table~\ref{table:parameters}.
		Coefficient and solution in the figures have been sampled on a grid with 
                $2^9\times 2^9$ equidistant points, and wavelet series truncation was after $N=9$ scales.
		The diffusion coefficient exhibits sharper features, 
                as compared to the smooth case Figure~\ref{fig:smoothsample}. 
		The realized RMSE (blue) resp. predicted RMSE (red) vs. 
                computational complexity is depicted in the right plot, 
                both curves show the predicted asymptotic behavior of $\cO(\eps^{-4})$. 
The bars indicate 95\%-confidence intervals of the realized RMSE.}
	\label{fig:roughsample}

	\centering
	\subfigure{\includegraphics[scale = 0.32]{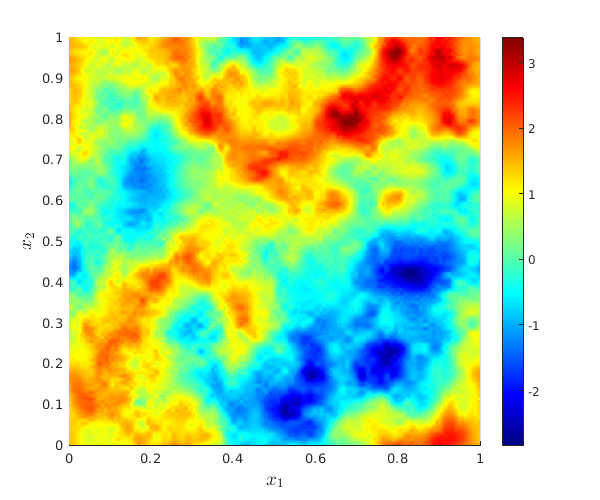}}
	\subfigure{\includegraphics[scale = 0.32]{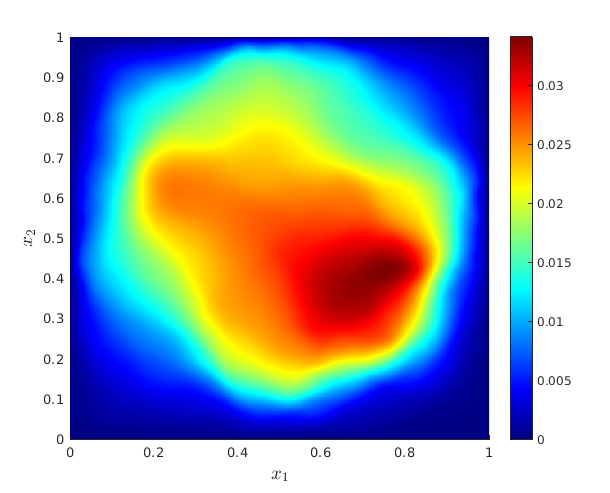}}
	\subfigure{\includegraphics[scale = 0.32]{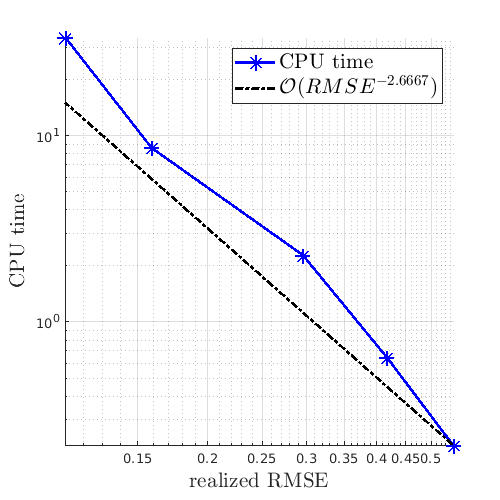}}
	\caption{Sample of a Besov random tree prior/log-diffusion coefficient $b_{T}$ (left) and the corresponding finite element approximation of $u$ (middle) for the ``p-exponential'' case in Table~\ref{table:parameters}.
	Coefficient and solution in the figures have been sampled on a grid with $2^9\times 2^9$ equidistant points, 
        and wavelet series truncation after $N=9$ scales.
	The variance is significantly increased compared to the previous examples, as indicated by the bars 95\%-confidence intervals of the realized RMSE (right plot). The a-priori fixed $\eps$ (red curve) has been scaled by a factor of three in this plot.	
	We still recover the predicted asymptotic error of order $\cO(\eps)$ with computational work of order $\cO(\eps^{-\frac{8}{3}})$. }
	\label{fig:pexponentialsample}
\end{figure}

\section{Conclusions}
\label{sec:Concl}
We have developed a computational framework for the efficient discretization of 
linear, elliptic PDEs with log-Besov random field coefficients which are modelled
by a multiresolution in the physical domain whose coefficients
are \rev{$p$-exponential} with random choices of active coefficients according to 
GW-trees. 
The corresponding pathwise diffusion coefficients generally admit only rather low
path regularity, thereby mandating low order 
Finite Element discretizations in the physical domain.
We established strong pathwise solution regularity, 
and FE error bounds for the corresponding single-level \MC-FEM algorithm. 
The corresponding error vs. work bounds for the multi-level \MC algorithm
follow then in the standard way. 
We emphasize again that higher order sampling methods seem
to be obstructed by the GW-tree structure which has recently been identified in \cite{KLSS21} 
as well-suited for modelling diffuse media such as clouds, fog and aerosols.
The presently proposed MLMC-FE error analysis for Elliptic PDEs with
(log-) Besov random tree coefficients will imply corresponding 
complexity bounds in multilevel Markov Chain Monte Carlo 
sampling strategies for Bayesian Inverse Problems on log-Besov random tree priors,
as considered for example in imaging applications in
\cite{LassasBesov09,DHS12,AgaDasHel2021,HL11}.
Details on their analysis and computation will be developed elsewhere.

\subsection*{Acknowledgements}

AS was partly funded by the ETH Foundations of Date Science Initiative (ETH-FDS), and it is gratefully acknowledged.

\bibliographystyle{abbrv}
\bibliography{references}
\appendix
\section{Galton-Watson trees}
\label{appendix:trees}
We provide some basic concepts of discrete trees, give a formal definition of \emph{Galton-Watson trees}, 
and record a result on their extinction probabilities. 
The presentation follows~\cite[Section 2]{abraham2015GW-Trees}, 
with modified notation where necessary. 
\subsection{Notation and basic definitions}
\label{appendix:trees-basics}
Let $\cU:=\bigcup_{n\ge 0}\bN^n$
be the set of all finite sequences of positive integers, 
where $\varrho:=()$ denotes the empty sequence, and we use the convention $\bN^0=\{\varrho\}$. 
For any $\mf n\in\cU$, let $|\mf n|$ denote the length of $\mf n$, with $|\varrho|:=0$.
For $\mf n,\mf m\in\cU$, we denote by $\mf n\mf m$ the concatenation of two sequences, 
with the convention $\mf n\varrho=\varrho \mf n=\mf n$ for all $\mf n\in\cU$.
There exists a partial order, called the \emph{genealogical order, on $\cU$}: 
we say that $\mf m\preceq \mf n$, whenever there is a $\mf n_0\in\cU$ such that $\mf m\mf n_0 = \mf n$. 
We say that $\mf m$ is an ancestor of $\mf n$ and write $\mf m\prec \mf n$ if $\mf m\preceq \mf n$ and $\mf m\neq \mf n$. 
The \emph{set of all ancestors of $\mf n$} 
is denoted by $A_\mf n:=\{\mf m\in\cU|\, \mf m\prec \mf n\}\subset\cU$. 

\begin{defi}\cite[Section 2.1]{abraham2015GW-Trees}\label{def:trees}
	A tree $\bf t$ is a subset $\bf t\subset\cU$ that satisfies 
	\begin{itemize}
		\item $\varrho\in\bf t$,
		\item If $\mf n\in\bf t$, then $A_{\mf n}\subset\bf t$,
		\item For any $\mf n\in\bf t$, there exists $\mf K_{\mf n}(\bf t)\in\rev{\bN_0}\cup\{\infty\}$, such that for every $n\in\bN$, $\mf n n\in\bf t$ if, and only if, $1\le n\le \mf K_{\mf n}(\bf t)$.
	\end{itemize}
\end{defi}

We denote the \emph{set of all trees} by $\mfT_\infty$.
Let $|{\bf t}|\in \bN \cup\{\infty\}$ be the cardinality of the tree $\bf t\in \mfT_\infty$, 
and introduce the \emph{set of all finite trees by 
$\mfT_0:=\{{\bf t}\in\mfT|\, |{\bf t}|<\infty\}$}.
The set $\mfT_0$ is countable.
The integer $\mf K_{\mf n}(\bf t)$ represents the number of offsprings in $\bf t$ at the node $\mf n$. 
The \emph{set of all trees without infinite nodes} is a subset $\mfT_\infty$ and denoted by
\begin{equation}\label{eq:set-of-trees}
	\mfT:=\{{\bf t}\in\mfT_\infty|\,\mf K_{\mf n}(\bf t)<\infty \text{ for all $\mf n\in\bf t$.} \}
\end{equation}
For $\mf n\in\bf t$, the \emph{sub-tree $\mf S_{\mf n}(\bf t)$ of $\bf t$ above 
node $\mf n$} is defined as:
\begin{equation}\label{eq:subtrees}
	\mf S_{\mf n}({\bf t}):=\{\mf m\in\cU|\,\mf n\mf m\in\bf t\}.
\end{equation}
We also need the \emph{restriction functions $\mf r_n:\mfT\to \mfT$, $n\in\bN_0$} which are
given by 
\begin{equation}\label{eq:tree-restriction}
	\mf r_n({\bf t}):=\{\mf n\in{\bf t}|\,|\mf n|\le n\}.
\end{equation}
With these preparations, 
we are in position to define metric and associated Borel $\gs$-algebra on $\mfT$. 
\begin{defi}\cite[Section 2.1]{abraham2015GW-Trees}
	\label{defi:tree-algebra}
	Let 
	\begin{equation}\label{eq:tree-metric}
		\gd_{\mfT}:\mfT\times\mfT\to[0, 1],
		\quad 
		({\bf t}, {\bf t'})\mapsto 2^{-\sup\{n\in\bN_0|\mf r_n({\bf t})=\mf r_n({\bf t'})\}}.
	\end{equation}  
	Furthermore, define the $\gs$-algebra
	\begin{equation}\label{eq:tree-sigma-algebra}
		\cB(\mfT)
		:=
		\gs\left(
		\bigcup_{t\in\mfT} 
		\bigcup_{n\in\bN} 
		\mf r_n^{-1}(\mf r_n({\bf t}))
		\right)
		=
		\gs\left(
		\bigcup_{t\in\mfT} 
		\bigcup_{n\in\bN}  
		\{{\bf t'}|\, \gd_{\mfT}({\bf t}, {\bf t'})\le 2^{-n}\}
		\right).
	\end{equation}
\end{defi}

By~\cite[Lemma 2.1]{abraham2015GW-Trees}, $(\mfT, \gd)$ is a complete and separable metric space.
The countable set of all finite trees $\mfT_0$ is dense in $\mfT$:
for all $\bf t\in\mfT$, we have $\mf r_n({\bf t})\to {\bf t}$ as $n\to\infty$ in $(\mfT, \gd)$. 
Further, 
$\gd_{\mfT}$ is an ultra-metric (see \cite[Section 2.1]{abraham2015GW-Trees}),
hence $\mf r_n^{-1}(\mf r_n({\bf t}))$ is the set of open (and closed) 
balls with center $\bf t$ and radius \rev{$2^{-n}$} \cite[Section 2.1]{abraham2015GW-Trees} with respect to $\gd$.
By separability, 
$\cB(\mfT)$ coincides with the Borel $\gs$-algebra on $\mfT$, 
that is generated by all open sets on $(\mfT, \gd_{\mfT})$.
Given a probability space $(\gO, \cA, P)$, 
we then call any $\cA/\cB(\mfT)$-measurable mapping $\tau:\gO\to\mfT$ a \emph{$\mfT$-valued random variable}. 
This allows us to formalize \emph{Galton-Watson} trees:

\begin{defi}\label{def:GW-tree}
[Galton-Watson(GW) tree with offspring distribution $\cP$]
	A $\mfT$-valued random variable $T$ has the \emph{branching property} if, for any $n\in\bN$, 
conditionally on $\{k_\varrho(\tau)=n\}$, the sub-trees $\mf S_1(T), \dots, \mf S_n(T)$ 
are independent and distributed as the original tree $\tau$.
	Now let $\cP$ be a probability distribution on $\bN_0$, i.e., 
        a probability measure on $(\bN_0, \cB(\bN_0))$.  
	A $\mfT$-valued random variable $T$ is called a \emph{Galton-Watson(GW) tree
        with offspring distribution $\cP$} if $T$ has the branching property 
        and if $\mf K_\varrho(T)\sim \cP$.
\end{defi}

According to \cite[Equation (12)]{abraham2015GW-Trees}, 
the \emph{distribution $\bQ_T$ of a GW tree $T$, restricted to the set of finite trees,} 
is given by
\begin{equation}\label{eq:tree-measure}
	\bQ_T(T={\bf t}) =
	\prod_{\mf n\in{\bf t}}\cP(\mf K_{\mf n}({\bf t})),
	\quad 
	{\bf t}\in \mfT_0,
\end{equation}
where the product on the right hand side is finite. 
\subsection{Random wavelet trees and extinction probabilities}
\label{sec:RndWavTree}
\label{appendix:wavelet-trees}
Now we consider again the $d$-dimensional torus $\bT^d$ with wavelet basis $\mathbf \Psi$ as in~\eqref{eq:torusbasis2}.
To relate the nodes of a GW tree to the wavelet indices in $\cI_{\mathbf \Psi}=\{j\in\bN_0,\; k\in K_j,\;l\in\cL_j\}$, 
we assume that $T$ is a GW tree with offspring distribution $\cP=\textrm{Bin}(2^d, \gb)$ for some $\gb\in[0,1]$.
For a given realization $T(\go)$ and node $\mf n\in T(\go)$, we identify the length of $\mf n$ with the corresponding wavelet scale via $j:=|\mf n|\in\bN_0$. 
Further, since $\cP$ is binomial, there are at most $2^{dj}$ nodes $\mf n$ of length $j$ in $T(\go)$. 
Each of \rev{these} nodes has $j$ entries in $\{1,\dots,2^d\}$. 
We assign an integer to all $\mf n\in T(\go)$ with $|\mf n|=j$ via the bijection
\begin{equation*}
	\mf I^1_{d,j}: \{1,\dots,2^d\}^j\to \{1,\dots,2^{dj}\},
	\quad
	\mf n\mapsto 1 + \sum_{i=1}^{j}2^{d(j-i)}(\mf n_i-1).
\end{equation*}
On the other hand, we assign for any $\{ 1,\dots, 2^{jd}\}$ an index in $K_j=\{0,\dots,2^j-1\}^d$ via 
\begin{equation*}
	\mf I^2_{d,j}: \{1,\dots, 2^{jd}\} \to K_j,
	\quad
	n\mapsto \left(
	\max(n-2^{j(i-1)}, 0)~\textrm{mod}~2^j  
	\right)_{i=1,\dots,d},
\end{equation*}
which yields a one-to-one mapping 
\begin{equation}\label{eq:node-to-index-map}
	\mf I_{d,j}: \{1,\dots,2^d\}^j\to K_j,
	\quad
	\mf n\mapsto \left(\mf I^2_{d,j}\circ \mf I^1_{d,j}\right)(\mf n).
\end{equation}
Thus, each $\mf n\in T(\go)$ corresponds to a unique pair of indices $(j,k)$ via 
$\mf n\mapsto (|\mf n|, \mf I^2_{d,|\mf n|}\circ\mf I^1_{d,|\mf n|}(\mf n))$.
Collecting the pairs $(j,k)$ for all nodes in $T$ yields the 
\emph{random active index set}
\begin{equation}\label{eq:node-to-index-set}
	\mf I_T(\go):=
	\bigcup_{\mf n\in T(\go)}
	\left(|\mf n|, \mf I^2_{d,|\mf n|}\circ\mf I^1_{d,|\mf n|}(\mf n)\right)
	\subset
	\{j\in\bN_0, k\in K_j\}.
\end{equation}
Then, only wavelets with indices 
$\cI_T(\go):=\{(j,k,l)|\; (j,k)\in \mf I_T(\go),\; l\in\cL_j\}\subset \cI_{\mathbf \Psi}$ 
are ``activated'' in the series expansion of a sample of 
$b_T$ in Definition~\ref{def:randomtree-prior}.

It is then of course of interest whether the GW tree $T$ terminates after a finite number of nodes, 
in which case $\mf I_T$ is finite, or if $T$ has infinitely many nodes. 
In the latter case, $b_T$ exhibits fractal structures on $\bT^d$, 
in areas where the wavelet expansion has infinitely many terms. 
Let the \emph{extinction event of a GW tree $T$} be denoted by $\cE(T):=\{T\in\mfT_0\}$.
The \emph{extinction probability of GW trees} are quantified in the following result:
\begin{lem}\cite[Corollary 2.5/Lemma 2.6]{abraham2015GW-Trees}
	\label{lem:extinction}
	Let $T:\gO\to\mfT$ be GW tree with offspring distribution $\cP$ and let $\zeta\sim\cP$.
	\begin{enumerate}
		\item If $\cP(0)=0$, then $\rev{\bQ_T}(\cE(T))=0$, 
		\item If $\cP(0)\in (0,1)$ and $\cP(0)+\cP(1)=1$, then $\rev{\bQ_T}(\cE(T))=1$,
		\item If $\cP(0)\in (0,1)$, $\cP(0)+\cP(1)<1$, and $\bE(\zeta)\le 1$, then $\rev{\bQ_T}(\cE(T))=1$,
		 \item If $\cP(0)\in (0,1)$, $\cP(0)+\cP(1)<1$, and $\bE(\zeta)> 1$, 
                       then $\rev{\bQ_T}(\cE(T))=q\in(0,1)$. 
                       Here $q$ is the smallest root in $[0,1]$ of the equation $\bE(q^\zeta)=q$. 
	\end{enumerate} 
\end{lem}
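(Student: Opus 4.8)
The plan is to reduce the statement, which concerns extinction probabilities of a Galton--Watson tree with offspring distribution $\cP=\textrm{Bin}(2^d,\gb)$, to the classical Galton--Watson extinction dichotomy via the generating function of $\zeta\sim\cP$. Write $g(s):=\bE(s^\zeta)$ for $s\in[0,1]$, so that $g(s)=((1-\gb)+\gb s)^{2^d}$ in the present binomial case, although for the proof of \cite[Corollary 2.5/Lemma 2.6]{abraham2015GW-Trees} as recalled here it suffices to work with a general offspring law $\cP$ and its generating function $g$. The key analytic facts are: $g$ is continuous, nondecreasing, and convex on $[0,1]$ with $g(1)=1$; $g(0)=\cP(0)$; and $g'(1^-)=\bE(\zeta)$ (finite or $+\infty$). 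The extinction probability $q=\bQ_T(\cE(T))$ is the smallest fixed point of $g$ in $[0,1]$ — this is the standard fact, which one obtains either by the branching-property recursion $q = g(q)$ (conditioning on $\mf K_\varrho(T)$ and using that the sub-trees are i.i.d.\ copies of $T$) together with a monotone-limit argument on $q_n:=\bQ_T(\mf r_n(T)\text{ has no node of length }n)$, or simply by citing \cite{abraham2015GW-Trees}. Since the statement explicitly attributes the result to that reference, the cleanest route is to invoke it and then verify the four case hypotheses translate to case hypotheses on $\cP$.

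First I would dispatch the three degenerate cases. In case 1, $\cP(0)=0$ means $g(0)=0$ is \emph{not} a fixed point issue: rather every node has at least one child, so the tree never terminates; more precisely $\bQ_T(|T|<\infty)=0$ follows because a finite tree must contain a leaf, i.e.\ a node $\mf n$ with $\mf K_{\mf n}(T)=0$, an event of probability zero for each node, and there are only countably many potential nodes. In case 2, $\cP(0)+\cP(1)=1$ with $\cP(0)\in(0,1)$: the tree is (stochastically dominated by, and in fact equal in law to) a ``linear'' tree, a single path of geometric length, hence finite almost surely; equivalently $\bE(\zeta)=\cP(1)<1$ so the process is subcritical. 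In case 3, $\cP(0)\in(0,1)$, $\cP(0)+\cP(1)<1$, and $\bE(\zeta)\le 1$ (critical or subcritical, non-degenerate): here convexity of $g$ together with $g'(1^-)=\bE(\zeta)\le 1$ forces $g(s)>s$ for all $s\in[0,1)$, so the only fixed point in $[0,1]$ is $s=1$, whence $q=1$.

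For case 4 — $\cP(0)\in(0,1)$, $\cP(0)+\cP(1)<1$, and $\bE(\zeta)>1$ (supercritical) — I would argue as follows. Strict convexity of $g$ on $[0,1]$ (which holds because $\cP$ charges some integer $\ge 2$, as $\cP(0)+\cP(1)<1$) combined with $g(1)=1$ and $g'(1^-)=\bE(\zeta)>1$ shows that $g(s)<s$ for $s$ slightly below $1$; since $g(0)=\cP(0)>0$, the continuous function $s\mapsto g(s)-s$ is positive at $0$ and negative just below $1$, so it has a root $q\in(0,1)$, and by strict convexity this root is unique in $[0,1)$, hence $q$ is the smallest root of $g(q)=q$ in $[0,1]$. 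Identifying this $q$ with $\bQ_T(\cE(T))$ is the content of \cite[Corollary 2.5]{abraham2015GW-Trees}: the extinction probabilities $q_n$ after $n$ generations satisfy $q_{n+1}=g(q_n)$ with $q_0=0$, form a nondecreasing sequence bounded above by $1$, and converge to the smallest fixed point of $g$, which is $q$ since $0\le q$.

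The main obstacle is not any single computation but making the translation between the four hypotheses of the lemma statement and the hypotheses under which \cite{abraham2015GW-Trees} phrases its dichotomy, and in particular being careful that \emph{extinction} here means $T\in\mfT_0$, i.e.\ the tree is finite, rather than ``a given generation is empty'' — these coincide because $T$ has no infinite node by construction (it takes values in $\mfT$, not $\mfT_\infty$), so a finite tree is exactly one that dies out after finitely many generations. With the offspring law being $\textrm{Bin}(2^d,\gb)$, one also notes $\cP(0)=(1-\gb)^{2^d}$, $\cP(0)+\cP(1)<1 \iff 2^d\ge 2$ i.e.\ $d\ge 1$ and $\gb\in(0,1)$, and $\bE(\zeta)=2^d\gb$, so $\bE(\zeta)>1 \iff \gb>2^{-d}$, recovering the threshold mentioned in Remark~\ref{rem:Besov-random-tree}; but since the lemma is stated for general $\cP$, I would keep the proof at that level of generality and simply cite \cite[Corollary 2.5/Lemma 2.6]{abraham2015GW-Trees} for the identification of $q$, supplying the short convexity arguments above for the reader's convenience.
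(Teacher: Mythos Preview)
Your argument is correct and is the standard generating-function proof of the Galton--Watson extinction dichotomy. Note, however, that the paper does not prove this lemma at all: it is stated as a direct citation of \cite[Corollary 2.5/Lemma 2.6]{abraham2015GW-Trees} and used as a black box, so there is no ``paper's own proof'' to compare against. What you have written is a self-contained justification that the paper chose to outsource; your treatment of the four cases via convexity of $g(s)=\bE(s^\zeta)$ and the recursion $q_{n+1}=g(q_n)$ is exactly the classical route and would be an acceptable replacement for the bare citation.
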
 
Lemma~\ref{lem:extinction} does not require a Binomial offspring distribution, 
but remains true for arbitrary distributions $\cP$ on $\bN_0$. 
Moreover, we conclude that a GW tree $T$ with offspring distribution $\cP=\textrm{Bin}(2^d, \gb)$ 
generates finite wavelet expansions via $\mf I_T$ $P$-a.s. if and only if $\gb\in [0,2^{-d}]$.

\subsection{\rev{Parametrization of binomial GW trees}}
\label{appendix:parametrization}
\rev{
Recall that $(\mfT, d_\mfT)$ is polish, but not normed.
The product parameter space $\bR^\bN\times \mfT$ 
of the Besov random tree prior is thus not a Banach space, 
which is in turn a key assumption for Bayesian inverse problems. 
However, it is possible to obtain suitable parametrizations for 
GW trees with binomial offspring distribution, by exploiting the 
connection to the continuous uniform distribution on $(0,1)$. 
}

\rev{
Let $T$ be a GW tree with with offspring distribution $\cP=\textrm{Bin}(2^d, \gb)$ for $\gb\in[0,1]$, 
let $\gO_U:=[0,1]^\bN$, and denote by $\cP_U$ the univariate uniform distribution on $(0,1)$ with density $f_U(u)=\indi_{\{ u\in(0,1)\}}(u)$.
We equip $\gO_U$ with the countable product-$\gs$-algebra and product measure, given by 
\begin{equation}  
	\cA_U:=\bigotimes_{n\in\bN} \cB((0,1)),\quad \text{and} \quad
	\bQ_U:=\bigotimes_{n\in\bN} \cP_U, 
\end{equation}
respectively, consider the probability space $(\gO_U, \cA_U, \bQ_U)$, and a sequence $U:=(U_{j,k}, j\in\bN,\; k\in K_j)$ of i.i.d. uniform random variables defined on $(\gO_U, \cA_U, \bQ_U)$. 
}

\rev{
Let $j\in\bN$, and consider the wavelet index $(j,k,l)\in \cI_\Psi$. 
By~\eqref{eq:node-to-index-map}, there is a unique node $\mf n$ of length $|\mf n|=j$ associated to $(j,k)$ for $k\in K_j$  via $(j,k) = (|\mf n|, \mf I^2_{d,|\mf n|}\circ\mf I^1_{d,|\mf n|}(\mf n))=(j, \mf I^2_{d,j}\circ\mf I^1_{d,j}(\mf n))$.
Recall that $A_{\mf n}$  is the set of ancestors of $\mf n$ and note that for each $i\le j-1$, there exists a unique ancestor $\mf m_i\in A_\mf n$ of $\mf n$ with length $|\mf m_i|=i$. We further introduce the notation $\mf m_j:=\mf n$ for convenience. 
By independence of the elements in the sequence $U$ we obtain that
\begin{align*}
	\bQ_U(\mf n\in T) 
	&= \bQ_U(\mf m_{|\mf n|-1}\in T,\; U_{j,k}\le \gb) \\
	&= \bQ_U(\mf m_{|\mf n|-1}\in T,\; U_{j, \mf I^2_{d,j}\circ\mf I^1_{d,j}(\mf n)}\le \gb) \\
	&=  \bQ_U(\mf m_{|\mf n|-2}\in T,\; 
	U_{j-1, \mf I^2_{d,j-1}\circ\mf I^1_{d,j-1}(\mf m_{|\mf n|-1})}\le \gb,\;
	U_{j, \mf I^2_{d,j}\circ\mf I^1_{d,j}(\mf n)}\le \gb) \\
	&=  \bQ_U(U_{1, \mf I^2_{d,1}\circ\mf I^1_{d,1}(\mf m_1)}\le \gb 
	,\dots,
	U_{j, \mf I^2_{d,j}\circ\mf I^1_{d,j}(\mf n)}\le \gb) \\
	&= \prod_{i=1}^{j} \cP_U(U_{i, \mf I^2_{d,i}\circ\mf I^1_{d,i}(\mf m_i)}\le \gb) \\
	&= \gb^{j}.
\end{align*} 
Hence, we may parametrize $T$, i.e., the random index set $\cI_T$, by the equivalence 
\begin{equation}\label{eq:uniform-equiv}
	(j,k, l)\in \cI_T(\go)\quad\Longleftrightarrow\quad 
	U_{1, \mf I^2_{d,1}\circ\mf I^1_{d,1}(\mf m_1)}, \dots, 
	U_{j-1, \mf I^2_{d,j-1}\circ\mf I^1_{d,j-1}(\mf m_{j-1})},\,
	U_{j, \mf I^2_{d,j}\circ\mf I^1_{d,j}(\mf n)}\le \gb
\end{equation}
for each $(j,k,l)\in \cI_\Psi$ with $j\ge 1$.
Equation~\eqref{eq:uniform-equiv} hence shows that the \emph{parameter-to-prior map} of a $B_s^p$-random variable with wavelet density $\gb$ as in~\eqref{eq:randomtreeprior} is given by   
\begin{equation}
	(0,1)^\bN \times \bR^N \to B_s^p, \quad
	U\times X \mapsto 
	\sum_{(j,k,l)\in \cI_\Psi} \eta_j X_{j,k}^l\psi_{j,k}^l \indi_{\{(j,k,l)\in \cI_T(\go)\}}.
\end{equation}
In particular, the parametrization with respect to $U\in (0,1)^\bN$ is \emph{discontinuous}, 
obstructing higher-order quadrature methods in the parameter domain, such as Quasi-\MC. 
}

\section{Finite element approximation}
\label{appendix:fem}
This appendix collects details on the (standard) 
implementation of the pathwise FE discretization 
from Section~\ref{sec:approximation}. 
In particular, 
we analyze the quadrature error arising during matrix assembly, 
describe an assembly routine based on tensorization for bilinear finite elements on $\bT^2$, 
and comment on the efficient evaluation and sampling cost of $a_N$.
\subsection{Finite element quadrature error}
\label{appendix:quadrature}
Let $h>0$ be the FE meshwidth 
and let $\{v_1,\dots,v_{n_h}\}$ be a suitable basis of $V_h$. 
As $u_{h,N}=\sum_{i=1}^{n_h} \ul u_i v_i$ for a coefficient vector $\ul u$, 
problem~\eqref{eq:ellipticpdediscrete} is equivalent to the linear system of equations
\begin{equation}\label{eq:discreteLSE}
	\mathbf A\ul u_h = \ul F.
\end{equation}
For any $i,j\in\{1,\dots,n_h\}$, the entries of $\mathbf A$ and $\ul F$ are given by 
\begin{equation}\label{eq:entriesLSE}
	\mathbf A_{i,j} :=\int_\cD a_{N}(\go)\nabla v_i\cdot \nabla v_j dx,
	\quad
	\text{and}
	\quad 
	\ul F_i:=  \dualpair{V'}{V}{f}{v_i},
\end{equation}
and have in general to be evaluated by numerical quadrature. 
Thereby, we commit a variational crime in the assembling of $\mathbf A$ and $\ul F$. 
As $a_n$ resp. $a$ is of low regularity, we have to make sure to choose an appropriate quadrature method, 
that does not spoil the convergence rate of the finite element approximation. 
It turns out that the midpoint rule is sufficient for ($d$-)linear elements, as we show in the remainder of this subsection. 
We restrict ourselves to the quadrature error analysis for the stiffness matrix $\mathbf A$ for brevity, 
the corresponding analysis for the load vector $\ul F$ is carried out analogously. 
We denote for any simplex/parallelotope $K\in\cK_h$ its midpoint or barycenter by $x_K^m\in K$. 
Furthermore, we define the piecewise constant approximation $\ol a_N$ of $a_n$ given by 
\begin{align*}
	\ol a_N(\go, x):= a_N(\go,x_K^m),
	\quad x\in K,\; K\in\cK_h.
\end{align*}
As we consider piecewise ($d$-)linear finite elements, approximating $\mathbf A_{ij}$ in~\eqref{eq:entriesLSE} by midpoint quadrature on each $K$ is equivalent to solving the following discrete problem: Find $\ol u_{N,h}(\go)\in V_h$ such that for all $v\in V$ it holds
\begin{equation*}
	\int_\cD \ol a_N(\go)\nabla \ol u_{N,h}(\go)\cdot\nabla v_h dx = \dualpair{V'}{V}{f}{v_h}.
\end{equation*}
There exists a.s. a unique solution $\ol u_{N,h}(\go)$ and the quadrature error is bounded in the following.

\begin{lem}\label{lem:quadrature}
	Let the assumptions of Theorem~\ref{thm:H1error} hold.
	For any $f\in H$, sufficiently small $\gk>0$ in~\eqref{eq:p-exponential} and any $r\in (0,s-\frac{d}{p})\cap(0,1]$, 
        there are constants $\ol q\in(1,\infty)$ and $C>0$ such that for any $N\in\bN$ and $h\in\mfH$ there holds 
	\begin{equation*}
		\|u_{N,h}-\ol u_{N,h}\|_{L^q(\gO; V)}+h^{-r}\|u_{N,h}-\ol u_{N,h}\|_{L^q(\gO; H)}\le C h^{r}
		\quad
		\begin{cases}
			&\text{for $q\in[1,\ol q)$ if $p=1$, and} \\
			&\text{for any $q\in [1,\infty)$ if $p>1$}.
		\end{cases}
	\end{equation*}
	Furthermore, if $s-\frac{d}{p}>1$, the statement holds for \rev{$r=1$}.
\end{lem}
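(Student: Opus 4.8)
The plan is to treat midpoint quadrature as a coefficient perturbation $a_N\rightsquigarrow\ol a_N$ and, pathwise, to run a Strang-type estimate for the $V$-error and an Aubin--Nitsche duality for the $H$-error, absorbing all pathwise constants into exponential moments of Hölder norms exactly as in Theorems~\ref{thm:H1error} and~\ref{thm:L2error}. Two elementary facts are used: since every $b_{T,N}(\go)$ is a \emph{finite} wavelet sum and $\mathbf\Psi\subset\rC^\ga$ with $\ga\ge1$, one has $a_N(\go)=\exp(b_{T,N}(\go)|_\cD)\in\rC^\ga(\ol\cD)\subset\rC^1(\ol\cD)$, so $\|a_N(\go)-\ol a_N(\go)\|_{L^\infty(\cD)}\le\|a_N(\go)\|_{\rC^r(\ol\cD)}h^r$ for $r\in(0,s-\tfrac dp)\cap(0,1]$ (as $\mathrm{diam}\,K\le h$, $x_K^m\in\ol K$); and $\ol a_{N,-}(\go)\ge a_{N,-}(\go)>0$, since $\ol a_N$ takes only the values $a_N(\go,x_K^m)\ge a_{N,-}(\go)$. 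Subtracting~\eqref{eq:ellipticpdediscrete} from its midpoint-quadrature counterpart, testing with $v_h=u_{N,h}(\go)-\ol u_{N,h}(\go)\in V_h$ and using coercivity gives $\|u_{N,h}(\go)-\ol u_{N,h}(\go)\|_V\le\|f\|_{V'}\,a_{N,-}(\go)^{-2}\|a_N(\go)\|_{\rC^r(\ol\cD)}h^r$. Taking $L^q(\gO)$-norms and splitting with Hölder's inequality, $\|a_N\|_{L^{q_1}(\gO;\rC^r(\ol\cD))}$ is bounded uniformly in $N$ by~\eqref{eq:exp-hoelder}--\eqref{eq:exp-hoelder1}, the second part of Theorem~\ref{thm:randomtreereg} and Proposition~\ref{prop:truncation} (verbatim as in item~3.) of the proof of Theorem~\ref{thm:solution-regularity}), and $\|a_{N,-}^{-2}\|_{L^{q_2}(\gO)}<\infty$ uniformly in $N$ since $a_{N,-}^{-1}\le\exp(\|b_{T,N}\|_{\cC^t})\le\exp(\|b_T\|_{\cC^t})$ for a fixed $t\in(0,s-\tfrac dp)$ with finite exponential moments of order $p$. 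For $p>1$ Young's inequality removes all restrictions on $q$; for $p=1$ one takes $\gk>0$ small enough for these moments to exist, giving $\ol q=\ol q(\gk)$. This proves the first summand, and the $r=1$ case when $s-\tfrac dp>1$.

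For the $H$-part I would mimic the Aubin--Nitsche step of Theorem~\ref{thm:L2error}. With $e_h:=u_{N,h}-\ol u_{N,h}\in V_h$, let $\varphi(\go)\in V$ solve $\int_\cD a_N(\go)\nabla\varphi(\go)\cdot\nabla v\,dx=(e_h(\go),v)_H$ for all $v\in V$; Lemmas~\ref{lem:ellipticreg}, \ref{lem:interp-sobolev} give $\varphi(\go)\in H^{1+r}(\cD)$ (resp.\ $H^2(\cD)$ if $r=1$) with $\|\varphi(\go)\|_{H^{1+r}(\cD)}$ bounded by a random prefactor of the same type as above times $\|e_h(\go)\|_H$. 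Subtracting the two discrete problems gives $\int_\cD a_N\nabla e_h\cdot\nabla v_h\,dx=-\int_\cD(a_N-\ol a_N)\nabla\ol u_{N,h}\cdot\nabla v_h\,dx$ for $v_h\in V_h$, so with $v_h=I_h\varphi$ the nodal interpolant,
\[
	\|e_h\|_H^2=\int_\cD a_N\nabla e_h\cdot\nabla(\varphi-I_h\varphi)\,dx-\int_\cD(a_N-\ol a_N)\nabla\ol u_{N,h}\cdot\nabla I_h\varphi\,dx .
\]
The first term is $O(h^{2r})$ as $\|\varphi-I_h\varphi\|_V\le Ch^r\|\varphi\|_{H^{1+r}(\cD)}$ and $\|e_h\|_V=O(h^r)$ by the first part. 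In the second term I would replace $\ol u_{N,h}$ by $u_N$ and $I_h\varphi$ by $\varphi$: the three cross terms are each $O(h^{2r})$ using $\|\ol u_{N,h}-u_N\|_V\le\|e_h\|_V+\|u_{N,h}-u_N\|_V=O(h^r)$ (Theorem~\ref{thm:H1error}), $\|\varphi-I_h\varphi\|_V=O(h^r)$ and $\|a_N-\ol a_N\|_{L^\infty(\cD)}=O(h^r)$, leaving the genuine quadrature consistency term $E(\go):=\int_\cD(a_N-\ol a_N)\nabla u_N\cdot\nabla\varphi\,dx$.

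Proving $\|E\|_{L^q(\gO)}=O(h^{2r})$ — one half order beyond the naive $O(h^r)$ — uniformly in $N$ is the step I expect to be the main obstacle. On piecewise-(multi-)linear elements midpoint quadrature integrates affine functions exactly, so by the Bramble--Hilbert lemma the local error $\int_K(a_N-a_N(x_K^m))\,dx$ is bounded by $h_K$ times a localized second-order seminorm of $a_N$ — this is precisely where the $\rC^\ga$, $\ga\ge1$, regularity of the finite truncation $b_{T,N}$ enters — while subtracting the elementwise mean of $\nabla u_N\cdot\nabla\varphi$ and invoking a fractional Poincar\'e inequality with $u_N,\varphi\in H^{1+r}(\cD)$ yields a further factor $h_K^{r_0}$; summing over $K\in\cK_h$ then gives $h^{2r}$ on the whole range $r\in(0,s-\tfrac dp)\cap(0,1]$, the endpoint $r=1$ being admissible exactly because $s-\tfrac dp>1$ forces the extra primal/dual regularity. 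The delicate point is uniformity in $N$: the $\rC^\ga$-norm of $b_{T,N}$ grows with $N$, so the estimate must be arranged so that only $N$-uniform quantities survive — namely $\|a_N\|_{\rC^r(\ol\cD)}$ together with the uniform-in-$N$ fractional Sobolev bounds on $a_N=\exp(b_{T,N}|_\cD)$ furnished by the first part of Theorem~\ref{thm:randomtreereg} and Proposition~\ref{prop:truncation} — and the two resulting $h$-exponents must be balanced. As before, $p>1$ leaves $q\in[1,\infty)$ unrestricted by Young's inequality, $p=1$ forces $\gk$ small and $\ol q(\gk)$; the dual problem uses $f$ only through $e_h\in H$, so $f\in H$ suffices, and the load-vector quadrature is handled identically and more easily.
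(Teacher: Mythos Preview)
Your $V$-estimate is correct and is precisely the paper's argument: both use the perturbation bound of Proposition~\ref{prop:perturbation} (your ``Strang-type'' estimate is the same computation) together with $\|a_N-\ol a_N\|_{L^\infty(\cD)}\le\|a_N\|_{\rC^r(\ol\cD)}\,h^r$, then split by H\"older and control $\|a_N\|_{L^{q_1}(\gO;\rC^r(\ol\cD))}$ and $\|a_{N,-}^{-1}\|_{L^{q_2}(\gO)}$ uniformly in $N$ via Theorem~\ref{thm:randomtreereg} and Proposition~\ref{prop:truncation}, with the usual $p=1$/$p>1$ dichotomy.

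For the $H$-estimate you are actually \emph{more careful than the paper}, but your argument still has a genuine gap. The paper simply asserts that the Aubin--Nitsche inequality $\|\ol e_{N,h}\|_H^2\le\|a_N\|_{L^\infty(\cD)}\,\|\ol e_{N,h}\|_V\,\|(I-P_h)\ol\varphi\|_V$ holds ``analogously to Theorem~\ref{thm:L2error}''. However, the Galerkin orthogonality $\int_\cD a_N\nabla\ol e_{N,h}\cdot\nabla v_h\,dx=0$ underlying that inequality \emph{fails here}: subtracting the two discrete problems gives $\int_\cD a_N\nabla\ol e_{N,h}\cdot\nabla v_h\,dx=-\int_\cD(a_N-\ol a_N)\nabla\ol u_{N,h}\cdot\nabla v_h\,dx$, so the duality identity carries a consistency term that the paper does not address. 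You correctly isolate this term (your $E(\go)$), but your proposed bound $\|E\|_{L^q(\gO)}=O(h^{2r})$ uniformly in $N$ is not established: the Bramble--Hilbert step needs a $\rC^1$- (or higher) seminorm of $a_N$ to extract the first factor $h$, and this norm is \emph{not} controlled uniformly in $N$ --- only $\|a_N\|_{\rC^r(\ol\cD)}$ with $r<s-\tfrac dp$ is. With the $N$-uniform $\rC^r$-regularity alone, the naive estimate on the consistency term is $\|a_N-\ol a_N\|_{L^\infty(\cD)}\|\ol u_{N,h}\|_V\|P_h\ol\varphi\|_V\le Ch^r\|\ol e_{N,h}\|_H$, yielding only $\|\ol e_{N,h}\|_H=O(h^r)$. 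In summary: for the $H$-part the paper's proof glosses over the consistency term, and your proposal correctly exposes it but does not close it; neither argument, as written, delivers the claimed $O(h^{2r})$ bound uniformly in $N$.
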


\begin{proof}
	There exists a.s. a unique solution $\ol u_{N,h}(\go)$ and the distance to $u_{N,h}(\go)$ is readily bounded 
        with Proposition~\ref{prop:perturbation} by
	\begin{equation}
		\|u_{N,h}(\go)-\ol u_{N,h}(\go)\|_V 
		\le 
		\frac{\|f\|_{V'}}{a_N(\go)\ol a_N(\go)}\|a_N(\go)-\ol a_N(\go)\|_{L^\infty(\cD)}.
	\end{equation}
	Theorems~\ref{thm:randomtreereg} and Proposition~\ref{prop:truncation} show that $a_N\in L^{3q}(\gO; \cC^t(\cD))$ for 
        all $t\in(0,s-\frac{d}{p})$ and $q\ge \frac{1}{3}$. 
	For $p>1$, we may again choose any $q\in[\frac{1}{3},\infty)$, for $p=1$ we have $q\in[\frac{1}{3},\ol q)$, where $\ol q>1$ for sufficiently small $\gk>0$.
	This yields for $q\in[1,\ol q)$ with Hölder's inequality 
	\begin{align*}
		\|u_{N,h}-\ol u_{N,h}\|_{L^q(\gO; V)}
		&\le C \|a_{N,-}^2\|_{L^{3q/2}(\gO)}
		\|a_N-\ol a_N\|_{L^{3q}(\gO; L^\infty(\cD))} \\
		&\le C \|a_{N,-}\|_{L^{3q}(\gO)}^2
		\|a_N\|_{L^{3q}(\gO; \cC^t(\cD))} h^{\min(t,1)} \\
		&\le Ch^{r}.
	\end{align*}
	To prove the error with respect to $H$, we recall the duality argument from Theorem~\ref{thm:L2error}: Let $\ol e_{N,h}:=u_{N,h}-\ol u_{N,h}$ and consider for fixed $\go\in\gO$ the dual problem to find $\ol \varphi(\go)\in V$ such that for all $v\in V$ it holds
	\begin{equation*}
		\int_\cD a_N(\go)\nabla \ol \varphi(\go)\cdot\nabla v dx = \dualpair{V'}{V}{\ol e_{N,h}(\go)}{v},
	\end{equation*}
	Analogously to the proof of Theorem~\ref{thm:L2error}, we derive the pathwise estimate 
	\begin{equation*}
		\|\ol e_{N,h}(\go)\|_H^2
		\le \|a_N(\go)\|_{L^\infty(\cD)} \|\ol e_{N,h}(\go)\|_V \|(I-P_h)\ol \varphi(\go)\|_V.
	\end{equation*}		
	Provided sufficiently integrability if $p=1$, 
        we find that $\|u_{N,h}-\ol u_{N,h}\|_{L^q(\gO; H)}\le Ch^{2r}$.
\end{proof}
Note that $r$ in Lemma~\ref{lem:quadrature} is identical to $r$ in Theorems~\ref{thm:H1error} and~\ref{thm:L2error}.
Hence the quadrature error does not dominate the finite element convergence rate.
We further emphasize that Lemma~\ref{lem:quadrature} holds for arbitrary piecewise (multi-)linear elements, 
regardless if we use simplices or parallelotopes to discretize $\cD$.
\subsection{Bilinear finite element discretization}
\label{appendix:bilinearFE}
We focus on the rectangular domain $\cD=\bT^2=[0,1]^2$ in this subsection.
It is convenient to use a spatial discretization based on bilinear finite elements. 
Let therefore $h=1/n$ for a $n\in\bN$ and consider the nodes 
$\ol x_i:=ih$, $i\in\{0,\dots,n\}$. 
Then $\Xi_h:=\{\ol x_0,\dots, \ol x_n\}\subset [0,1]$ defines an equidistant mesh of $\bT^1$. 
A rectangular tesselation of $\cK_h$ of $\bT^2$ is then given by the 
$(n+1)^2$ grid points $\Xi_h^2:=\{(\ol x_{i_1}, \ol x_{i_2})|\, i_1,i_2\in\{0,\dots,n\} \}\subset\bT^2$. 
Let 
\begin{equation*}
	\phi_i(x):=\max\left\{0,1-\frac{|\ol x_i-h|}{h}\right\},\quad x\in\bR, \quad i\in\{0,\dots,n\}
\end{equation*} 
be the one-dimensional hat function basis at the nodes in $\Xi_h$.
Then, the space of bilinear finite elements corresponding to $\Xi_h^2$ resp. $\cK_h$ is given by
\begin{equation*}
	V_h:=\text{span}_\bR \{ \phi_{i_1}\otimes \phi_{i_2}, \quad i_1,i_2\in\{1,\dots,n-1\}\}.
\end{equation*}
The dyads in the tensor product basis coincide with the pointwise products
$$
	\phi_{i_1}\otimes \phi_{i_2}(x):=\phi_{i_1}(x_1)\phi_{i_2}(x_2),\quad x\in\bR^2.
$$
Note that $\dim(V_h)=(n-1)^2$ due to the homogeneous Dirichlet boundary conditions.
Now let $i, j\in\{1,\dots,(n-1)^2\}$ be indices such that 
$v_i=\phi_{i_1}\otimes \phi_{i_2}\in V_h$ and $v_j=\phi_{j_1}\otimes \phi_{j_2}\in V_h$.
The entries of the associated stiffness matrix $\mathbf A\in\bR^{(n-1)\times(n-1)}$ 
are given by 
\begin{equation}\label{eq:stiffness}
	\mathbf A_{i,j}
	:=
	\int_{\bT^2} a_N(\go,x) 
	\nabla (\phi_{i_1}\otimes \phi_{i_2})(x) \cdot \nabla (\phi_{j_1}\otimes \phi_{j_2})(x) dx.
\end{equation}
We approximate the entries of $\mathbf A$ by midpoint quadrature on each square in $\cK_h$, which may be realized by replacing $a$ in~\eqref{eq:stiffness} by a suitable piecewise constant interpolation at the midpoints as in Appendix~\ref{appendix:quadrature}.
Let the midpoint of each square $K=[\ol x_{i_1}, \ol x_{i_1+1}]\times [\ol x_{i_2}, \ol x_{i_2+1}]\in \cK_h$ be given by $x_K^m=x_{i_1,i_2}^m:=(\ol x_{i_1}+\frac{h}{2}, \ol x_{i_2}+\frac{h}{2})$ for $i_1,i_2\in\{0,\dots, n-1\}$ and define 
\begin{align*}
	\ol a_N(\go, x):= a_N(\go,x_{i_1,i_2}^m),
	\quad x\in [\ol x_{i_1}, \ol x_{i_1+1}]\times [\ol x_{i_2}, \ol x_{i_2+1}].
\end{align*}
With indices $i,j$ as above this yields
\begin{align*}
	\mathbf A_{i,j}
	&=
	\int_{\bT^2} a_N(\go) 
	\nabla (\phi_{i_1}\otimes \phi_{i_2})(x) \cdot \nabla (\phi_{j_1}\otimes \phi_{j_2})(x) dx 
	\\
	&\approx
	\int_{\bT^2} \ol a_N(\go) 
	\nabla (\phi_{i_1}\otimes \phi_{i_2})(x) \cdot \nabla (\phi_{j_1}\otimes \phi_{j_2})(x) dx
	\\
	&=
	\int_{\ol x_{i_1-1}}^{\ol x_{i_1+1}}\int_{\ol x_{i_2-1}}^{\ol x_{i_2+1}} 
	\ol a_N(\go) 
	\nabla (\phi_{i_1}\otimes \phi_{i_2})(x) \cdot \nabla (\phi_{j_1}\otimes \phi_{j_2})(x) dx
	\\
	&= a_N(\go,x_{i_1-1,i_2-1}^m) 
	\left[
	\int_{\ol x_{i_1-1}}^{\ol x_{i_1}}\phi_{i_1}'\phi_{j_1}'dx_1
	\int_{\ol x_{i_2-1}}^{\ol x_{i_2}}\phi_{i_2}\phi_{j_2}dx_2 
	+
	\int_{\ol x_{i_1-1}}^{\ol x_{i_1}}\phi_{i_1}\phi_{j_1}dx_1
	\int_{\ol x_{i_2-1}}^{\ol x_{i_2}}\phi_{i_2}'\phi_{j_2}'dx_2 
	\right]
	\\ &\quad +
	a_N(\go,x_{i_1,i_2-1}^m) 
	\left[
	\int_{\ol x_{i_1}}^{\ol x_{i_1+1}}\phi_{i_1}'\phi_{j_1}'dx_1
	\int_{\ol x_{i_2-1}}^{\ol x_{i_2}}\phi_{i_2}\phi_{j_2}dx_2 
	+
	\int_{\ol x_{i_1}}^{\ol x_{i_1+1}}\phi_{i_1}\phi_{j_1}dx_1
	\int_{\ol x_{i_2-1}}^{\ol x_{i_2}}\phi_{i_2}'\phi_{j_2}'dx_2 
	\right]
	\\ &\quad +
	a_N(\go,x_{i_1-1,i_2}^m) 
	\left[
	\int_{\ol x_{i_1-1}}^{\ol x_{i_1}}\phi_{i_1}'\phi_{j_1}'dx_1
	\int_{\ol x_{i_2}}^{\ol x_{i_2+1}}\phi_{i_2}\phi_{j_2}dx_2 
	+
	\int_{\ol x_{i_1-1}}^{\ol x_{i_1}}\phi_{i_1}\phi_{j_1}dx_1
	\int_{\ol x_{i_2}}^{\ol x_{i_2+1}}\phi_{i_2}'\phi_{j_2}'dx_2 
	\right]
	\\ &\quad +
	a_N(\go,x_{i_1,i_2}^m) 
	\left[
	\int_{\ol x_{i_1}}^{\ol x_{i_1+1}}\phi_{i_1}'\phi_{j_1}'dx_1
	\int_{\ol x_{i_2}}^{\ol x_{i_2+1}}\phi_{i_2}\phi_{j_2}dx_2 
	+
	\int_{\ol x_{i_1}}^{\ol x_{i_1+1}}\phi_{i_1}\phi_{j_1}dx_1
	\int_{\ol x_{i_2}}^{\ol x_{i_2+1}}\phi_{i_2}'\phi_{j_2}'dx_2 
	\right].
\end{align*}
We define the matrices $\mathbf S$ and $\mathbf M$ via 
\begin{align*}
	&\mathbf S_{i_1,j_1}
	:=\int_{\ol x_{i_1-1}}^{\ol x_{i_1}}\phi_{i_1}'\phi_{j_1}'dx_1
	=\begin{cases}
		\frac{1}{h},\quad i_1=j_1,\\
		-\frac{1}{h},\quad i_1=j_1+1,\\
		0, \quad\text{else},
	\end{cases}
	\\
	&\mathbf M_{i_1,j_1}
	:=\int_{\ol x_{i_1-1}}^{\ol x_{i_1}}\phi_{i_1}\phi_{j_1}dx_1,
	=\begin{cases}
		\frac{h}{3},\quad i_1=j_1,\\
		\frac{h}{6},\quad i_1=j_1+1,\\
		0, \quad\text{else}.
	\end{cases}
\end{align*}
for $i_1,j_1\in\{1,\dots,n-1\}$, and observe that 
\begin{equation*}
	\mathbf S^\top_{i_1,j_1}=\mathbf S_{j_1,i_1}
	=\int_{\ol x_{i_1}}^{\ol x_{i_1+1}}\phi_{i_1}'\phi_{j_1}'dx_1,
	\quad 
	\mathbf M^\top_{i_1,j_1}=\mathbf M_{j_1,i_1}
	=\int_{\ol x_{i_1}}^{\ol x_{i_1+1}}\phi_{i_1}\phi_{j_1}dx_1.
\end{equation*}
This yields
\begin{equation}\label{eq:assembly}
	\begin{split}
		\mathbf A_{i,j}
		&= 
		a_N(\go,x_{i_1-1,i_2-1}^m)  
		(\mathbf S_{i_1,j_1}\mathbf M_{i_2,j_2}+\mathbf M_{i_1,j_1}\mathbf S_{i_2,j_2}) \\
		&\quad+ a_N(\go,x_{i_1,i_2-1}^m) 
		(\mathbf S^\top_{i_1,j_1}\mathbf M_{i_2,j_2}+\mathbf M^\top_{i_1,j_1}\mathbf S_{i_2,j_2}) \\
		&\quad+ a_N(\go,x_{i_1-1,i_2}^m) 
		(\mathbf S_{i_1,j_1}\mathbf M^\top_{i_2,j_2}+\mathbf M_{i_1,j_1}\mathbf S^\top_{i_2,j_2}) \\
		&\quad+ a_N(\go,x_{i_1,i_2}^m) 
		(\mathbf S^\top_{i_1,j_1}\mathbf M^\top_{i_2,j_2}+\mathbf M^\top_{i_1,j_1}\mathbf S^\top_{i_2,j_2}),
	\end{split}
\end{equation}
and we only obtain a contribution to $\mathbf A_{i,j}$ if $|i_1-j_1|, |i_2-j_2|\le 1$. Hence, the representation in~\eqref{eq:assembly} may be used for an efficiently assembly of $\mathbf A$.

\subsection{Evaluation of $a_N$}
\label{appendix:coefficient}
To assemble $\mathbf A$ via~\eqref{eq:assembly} it still remains to evaluate $a_N(\go)=\exp(b_N(\go))$ at the quadrature points in $\Xi_h^2$, or, more generally, at the $d$-dimensional grid 
$\Xi_h^d:=\otimes_{i=1}^d \Xi_h\subset [0,1]^d$.
We recall that
\begin{equation*}
	b_{T,N}(\go)= \sum_{\substack{(j,k,l)\in \cI_T(\go) \\ j\le N}} \eta_jX_{j,k}^l(\go)\psi_{j,k}^l, \quad \go\in\gO.
\end{equation*}
The tensor-product representation of $\psi_{j,k}^l\in L^2(\bT^2)$ given in~\eqref{eq:torusbasis} and~\eqref{eq:psi_scaled} then shows
\begin{align*}
	b_{T,N}(\go, x)
	&= 
	\sum_{\substack{(j,k,l)\in \cI_T(\go) \\ j\le N}} \eta_j X_{j,k}^l(\go)
	2^{\frac{dj}{2}}
	\bigotimes_{i=1}^d\psi_{j+w,k_i,l(i)}(x) \\
	&=
	\sum_{\substack{(j,k,l)\in \cI_T(\go) \\ j\le N}} \eta_j X_{j,k}^l(\go)2^{\frac{dj}{2}}
	\prod_{i=1}^d\psi_{j+w,k_i,l(i)}(x_i), 
	\quad x\in\bT^d.
\end{align*}
We define the vectors 
$\ul {\psi_{j,k_i,l(i)}}:=2^{\frac{j}{2}}\psi_{j+w,k_i,l(i)}(\ol x)|_{\ol x\in\Xi}\in \bR^{n}$ 
and finally obtain  
\begin{equation*}
	b_{T,N}(\go, x)\big|_{x\in\Xi_h^2}
	= 
	\sum_{\substack{(j,k,l)\in \cI_T(\go) \\ j\le N}} \eta_j X_{j,k}^l(\go) \;
	\prod_{i=1}^d\ul {\psi_{j,k_i,l(i)}} \in \bR^{dn}.
\end{equation*}
Therefore, it is sufficient to evaluate the scaled and shifted functions $\psi_{j,k,l(i)}$ on the one-dimensional grid $\Xi_h$, the values of $b_N$, resp. $a_N$, at the $d$-dimensional grid $\Xi_h^d$ are then obtained by tensorization.
Evaluating $\psi_{j,k,l(i)}$ eventually requires to approximate the fractal functions $\phi$ and $\psi$ at a discrete set of points. 
This is feasible to arbitrary precision with the iterative \textit{Cascade algorithm}, see, e.g., \cite[Chapter 6.5]{daubechies1992ten}.
Using $J\in\bN$ iterations in the Cascade algorithm yields approximate values of $\phi, \psi$ at $2^{J}$ dyadic grid points, which are then interpolated to obtain piece-wise linear or constant approximation of continuous $\phi$ and $\psi$ interpolation.
The resulting error is of order $\cO(2^{-J\ga})$ if $\phi, \psi\in \rC^\ga(\bR)$ with $\ga\in(0,1]$. 
Consequently, we use $J_\ell:=\ceil{\frac{N_\ell t}{\ga}}$ on each level in the MLMC algorithm to match the midpoint quadrature error in Lemma~\ref{lem:quadrature}.
The cost of sampling $b_{T,N}$ on a uniform, dyadic grid is quantified in the following.
\begin{lem}\label{lem:cost}
	Let $h_\ell=2^{-(\ell+1)}$ for $\ell\in\bN_0$, let $\Xi_{h_\ell}^d:=\otimes_{i=1}^d \Xi_{h_\ell}\subset [0,1]^d$ for $d\in\bN$, and let $\cC_{sample}$ denote the random cost (in terms of work and memory required) of 
	sampling $b_{T,N}$ with respect to the grid $\Xi_{h_\ell}^d$.
	Then, there is a constant $C>0$, independent of $h_\ell$ and $N$, such that
		\begin{equation*}
		\bE(\cC_{\rm sample}) 
		\leq C \begin{cases}
			 h_\ell^{-d} (N+1)\quad&\text{if $\gb=1$, and}\\
			 h_\ell^{-d} \quad&\text{if $\gb\in(0,1)$.}
		\end{cases}
	\end{equation*}
\end{lem}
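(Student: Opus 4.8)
The plan is to count, for a fixed level $\ell$ and truncation index $N$, how many active wavelet coefficients contribute to $b_{T,N}$ at scales $j \le N$, and then to multiply by the per-coefficient evaluation cost on the $d$-dimensional grid $\Xi_{h_\ell}^d$. The key structural fact is the factorization established in Appendix~\ref{appendix:coefficient}: evaluating one term $\eta_j X_{j,k}^l \psi_{j,k}^l$ on $\Xi_{h_\ell}^d$ reduces, via the tensor product $\psi_{j,k}^l = 2^{dj/2}\prod_{i=1}^d \psi_{j+w,k_i,l(i)}$, to evaluating $d$ one-dimensional functions $\ul{\psi_{j,k_i,l(i)}}\in\bR^n$ (with $n = h_\ell^{-1}$) and forming an outer product. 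Since $\psi_{j+w,k,l}$ has support of diameter $\cO(2^{-j})$, the one-dimensional vector $\ul{\psi_{j,k_i,l(i)}}$ has only $\cO(2^{-j} n + 1) = \cO(2^{-j} h_\ell^{-1}+1)$ nonzero entries; the cost of materializing the $d$-fold tensor product restricted to the grid is therefore $\cO(\prod_{i=1}^d (2^{-j}h_\ell^{-1}+1))$. One also needs the Cascade-algorithm preprocessing, which costs $\cO(2^{J_\ell})$ with $J_\ell = \ceil{N_\ell t/\ga}$; since this is deterministic and polynomially bounded in $h_\ell^{-1}$ it is dominated by the grid size and does not affect the leading order.

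First I would fix $\ell$, write $n := h_\ell^{-1} = 2^{\ell+1}$, and record that for each active index $(j,k)\in\mf I_T$ with $j\le N$ the number of nonzero grid values produced by that wavelet's contribution is bounded by $C\,\min(2^{-jd}n^d,\; n^d)\le C\, n^d$ on the coarse scales and more sharply by $C\,(2^{-j}n+1)^d$ in general; I would also note each such contribution requires $\cO(2^{-j}n+1)$ work in each coordinate, hence $\cO((2^{-j}n+1)^d)$ work total. Summing over active indices, the random cost is
\begin{equation*}
	\cC_{\rm sample} \le C\sum_{j=0}^{N} v(j)\,\bigl(2^{-j}n+1\bigr)^d + C\,2^{J_\ell},
\end{equation*}
where $v(j) = \#\{k\in K_j \mid (j,k,l)\in\cI_T\}$ is the number of surviving tree nodes on scale $j$, as in the proof of Theorem~\ref{thm:randomtreereg}. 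Taking expectations and using $\bE(v(j)) = (2^d\gb)^j$ (Wald's identity / the branching property, exactly as in that proof) gives
\begin{equation*}
	\bE(\cC_{\rm sample}) \le C\sum_{j=0}^{N} (2^d\gb)^j \bigl(2^{-j}n+1\bigr)^d + C\,2^{J_\ell}
	\le C\, n^d \sum_{j=0}^{N} (2^d\gb)^j 2^{-jd}\bigl(1+2^{j}n^{-1}\bigr)^d + C\,2^{J_\ell}.
\end{equation*}

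The remaining step is to expand $(1+2^j n^{-1})^d = \sum_{m=0}^d \binom{d}{m} 2^{jm} n^{-m}$ and analyze $\sum_{j=0}^N (2^d\gb)^j 2^{-jd} 2^{jm} n^{-m} = \sum_{j=0}^N \gb^j 2^{j(m-d)} n^{-m}$ for each $m\in\{0,\dots,d\}$. The $m=0$ term is $\sum_{j=0}^N \gb^j 2^{-jd}$, which is bounded by a constant (geometric, ratio $\gb 2^{-d}<1$) when $\gb\le 1$; the $m<d$ terms are likewise summable geometrically with ratio $\gb 2^{m-d}\le 2^{m-d}<1$ and carry a factor $n^{-m}$; the delicate term is $m=d$, which gives $\sum_{j=0}^N \gb^j n^{-d}$. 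If $\gb\in(0,1)$ this last sum is $\le (1-\gb)^{-1} n^{-d}$, so after multiplying by $n^d$ we obtain a bound of order $C$ times $n^d$, i.e. $C h_\ell^{-d}$, uniformly in $N$. If $\gb=1$ the $m=d$ sum is $(N+1)n^{-d}$, producing the extra factor $N+1$ and the bound $C h_\ell^{-d}(N+1)$. Finally, since $J_\ell = \cO(N)$ gives $2^{J_\ell} = \cO(2^{CN})$ which is negligible next to $h_\ell^{-d}$ in the relevant regime (and can in any case be absorbed by choosing $J_\ell$ as in the paper, matched to the quadrature error), the claimed bound follows. The main obstacle, and the only place real care is needed, is the bookkeeping in the $m=d$ term: it is precisely the boundary case where the geometric sum fails to be uniformly bounded in $N$ when $\gb=1$, which is the source of the dichotomy in the statement; everything else is routine geometric-series estimation.
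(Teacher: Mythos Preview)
Your proposal follows the right outline but contains an algebra slip that hides a genuine gap. You write $(2^d\gb)^j\, 2^{-jd}\, 2^{jm}\, n^{-m} = \gb^j\, 2^{j(m-d)}\, n^{-m}$, but since $(2^d\gb)^j 2^{-jd}=\gb^j$ the correct simplification is $\gb^j\, 2^{jm}\, n^{-m}$. With this correction the roles of your terms are reversed: the $m=0$ term gives (after multiplying back by $n^d$) the contribution $n^d\sum_{j\le N}\gb^j$, which is precisely the sum producing the $\gb<1$ versus $\gb=1$ dichotomy, while your ``delicate'' $m=d$ term becomes $\sum_{j\le N}(2^d\gb)^j$. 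For every $\gb>2^{-d}$ this last sum grows like $(2^d\gb)^N$ and is \emph{not} dominated by $h_\ell^{-d}$ or by $h_\ell^{-d}(N{+}1)$ when $N\gg\ell$; the lemma, however, is stated uniformly in $N$. The source is the ``$+1$'' in your per-wavelet cost bound $(2^{-j}n+1)^d$: it charges at least unit cost to every active wavelet, so after taking expectations you pick up the expected total number of active nodes, which can vastly exceed $n^d$.

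The paper sidesteps this by bounding the per-wavelet cost directly by $C\,2^{-jd}n^d$ (the volume of $\mathrm{supp}(\psi^l_{j+w,k})$ times the grid density), without the ``$+1$''. This is legitimate in aggregate because the wavelets $\{\psi^l_{j,k}:k\in K_j\}$ have uniformly bounded overlap, so $\sum_{k,l}\bigl|\mathrm{supp}(\psi^l_{j+w,k})\cap\Xi^d_{h_\ell}\bigr|\le C\,n^d$ for every scale $j$. Multiplying by $P\bigl((j,k,l)\in\cI_T\bigr)=\gb^j$ and summing over $j\le N$ yields $\bE(\cC_{\rm sample})\le C\,n^d\sum_{j\le N}\gb^j$, from which both cases follow immediately and the binomial expansion is unnecessary. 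To repair your argument, either drop the ``$+1$'' and appeal to finite overlap, or split the sum at $j\approx\ell$ and use the overlap bound only for the tail $j>\ell$. (As a side remark, the Cascade preprocessing term $2^{J_\ell}$ you add is not part of the paper's cost accounting, and your assertion that it is negligible is likewise unjustified for arbitrary $N$.)
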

\begin{proof}
	Given that $\psi$ and $\phi$ are evaluated at the $2^{\ell+1}\in\bN$ grid points in $\Xi_{h_\ell}$, we need to calculate $2^d-1$ tensor products of scaled and translated vectors $\ul {\psi_{j,k_i,l(i)}}\in \bR^{2^{\ell+1}}$. 
	Recall from the MRA in Subsection~\ref{subsec:dVarMRA} that tensorization yields $2^{jd}$ one-periodic wavelet functions $\psi_{j,k}^{l}$ on each scale $j\in\bN_0$. Moreover, the support of $\psi_{j+w,k}^{l}$ has diameter bounded by $2^{-d(j+1-w)}$ in $\bT^d$ for fixed index $(j,k,l)\in\cI_{\bf \Psi}$, where $w\in\bN$ is a scaling factor that only depends on the choice of $\phi$ and $\psi$. 
	Hence, the number of grid points lying in the support of $\psi_{j,k}^{l}$ is given by 
	\begin{equation}
		|\text{supp} (\psi_{j+w,k}^l)\cap \Xi_{h_\ell}^d| \le 2^{-d(j+1-w)}2^{d(l+1)} = 2^{d(l-j+w)}.
	\end{equation}
	Now we also fix a realization $(T(\go), X(\go))$ of the random tree $T$  and the coefficients $X$.
	If $(j,k,l)\in\cI_T(\go)$, we multiply the corresponding $2^{d(l-j+w)}$ grid points with the coefficient $X_{j,k}^l(\go)$. Otherwise, if $(j,k,l)\notin\cI_T(\go)$, 
	there is no contribution to $b_{T,N}(\go)$ from this index. 
	Summing over all non-zero contributions and grid points thus requires computational cost of
	\begin{equation*}
		\cC_{\rm sample}
		\le  
		2\sum_{\substack{(j,k,l)\in \cI_T(\go) \\ j\le N}} 2^{d(l-j+w)}
		= 2\sum_{\substack{(j,k,l)\in \cI_{\bf \Psi} \\ j\le N}} 
		\indi_{\{(j,k,l)\in\cI_T(\go)\}} 2^{d(l-j+w)}.
	\end{equation*}
	Since $\cP=\textrm{Bin}(2^d, \gb)$, it readily follows that $P((j,k,l)\in\cI_{T(\go)})=\gb^j$, which in turn shows
	\begin{equation*}
		\bE(\cC_{\rm sample}) 
		= 2\sum_{j=0}^N 2^{dj} (2^{d}-1) \gb^j  2^{d(l-j+w)}
		\le \begin{cases}
			2^{dw+1}(2^d-1) h_\ell^{-d} (N+1)\quad&\text{if $\gb=1$, and}\\
			 \frac{2^{dw+1}(2^d-1)}{1-\gb} h_\ell^{-d} \quad&\text{if $\gb\in(0,1)$.}\\
		\end{cases}
	\end{equation*}
\end{proof}
\begin{rem}
	Lemma~\ref{lem:cost} shows that for given $\gb\in(0,1)$, the expected cost of sampling $b_{N,T}$ is bounded by $Ch_\ell^{-d}$ \emph{uniformly with respect to  $N\in\bN$}. Thus, the condition that a sample of $u_{N_\ell,h_\ell}$ may be realized with (expected) work $\cO(h_\ell^{-d})$ from Assumption~\ref{ass:refinement} is indeed justified. 
	On the other hand, we note that $C=C(d,\gb)\to\infty$ as $\gb\to 1$, resulting in a possibly large hidden constant within the asymptotic costs in Theorem~\ref{thm:work}. However, if we choose the error balancing $N\propto |\log(h_\ell)|$ according to~\eqref{eq:balancederror}, we still recover the only slightly worse complexity bound of $\cO(h_\ell^{-d}|\log(h_\ell)|)$ per sample in the limit $\gb=1$. 
\end{rem}
\end{document}